\newtheorem{theorem}{Theorem}[section]
\newtheorem{lemma}[theorem]{Lemma}
\newtheorem{proposition}[theorem]{Proposition}
\newtheorem{corollary}[theorem]{Corollary}
\theoremstyle{definition}
\newtheorem{definition}[theorem]{Definition}
\newtheorem{example}[theorem]{Example}
\newtheorem{remark}[theorem]{Remark}
\newtheorem{question}[theorem]{Question}
\numberwithin{equation}{section}
\newcommand{\g}{\mathfrak{g}}
\newcommand{\h}{\mathfrak{h}}
\newcommand{\p}{\mathfrak{p}}
\newcommand{\m}{\mathfrak{m}}
\newcommand{\rea}[2]{U_{#1}(#2)} 
\newcommand{\Hom}{\text{Hom}}
\newcommand{\pth}[1]{{#1}^{[p]}} 
\newcommand{\ra}{\rightarrow}
\newcommand{\sudim}{\underline{\text{dim}}\,}
\newcommand{\ev}[1]{{#1}_{\bar{0}}}
\newcommand{\od}[1]{{#1}_{\bar{1}}}
\newcommand{\Zp}{\mathcal{Z}_p}
\newcommand{\osp}{\mathfrak{osp}}
\newcommand{\hf}{\frac12}
\newcommand{\la}{\lambda}
\newcommand{\ad}{\text{ad}\,}
\newcommand{\C}{ \mathbb C }
\newcommand{\gl}{{\mathfrak{gl}}}
\newcommand{\Z}{ \mathbb Z }
\def\Ddots{\mathinner{\mkern1mu\raise\p@
\vbox{\kern7\p@\hbox{.}}\mkern2mu
\raise4\p@\hbox{.}\mkern2mu\raise7\p@\hbox{.}\mkern1mu}}
\begin{document}
\title[Lie Superalgebras in Prime Characteristic]
{Representations of Lie Superalgebras in Prime Characteristic I}

\author[Weiqiang Wang]{Weiqiang Wang}\thanks{
This research is partially supported by NSA and NSF grants.}
\address{Department of Mathematics, University of Virginia,
Charlottesville, VA 22904} \email{ww9c@virginia.edu (Wang)\\
lz4u@virginia.edu (Zhao)}

\author{Lei Zhao}

\begin{abstract}
We initiate the representation theory of restricted Lie
superalgebras over an algebraically closed field of characteristic
$p>2$. A superalgebra generalization of the celebrated
Kac-Weisfeiler Conjecture is formulated, which exhibits a mixture
of $p$-power and $2$-power divisibilities of dimensions of
modules. We establish the Conjecture for basic classical Lie
superalgebras.
\end{abstract}

\maketitle
\date{}
  \setcounter{tocdepth}{1}
  \tableofcontents

\section{Introduction}
\subsection{ }

The finite-dimensional complex simple Lie superalgebras were
classified by Kac \cite{Kac} in 1970's. A main subclass in this
classification, independently obtained by Scheunert, Nahm and
Rittenberg \cite{SNR1,SNR2} (with contributions from Kaplansky and
others), is called {\em basic classical} Lie superalgebras, which
by definition admit an even nondegenerate supersymmetric bilinear
form and whose even subalgebras are reductive. It consists of
several infinite series and 3 exceptional ones.

The modular representations of restricted Lie algebras in prime
characteristic have been developed over the years with intimate
connections to algebraic groups (see \cite{KW, FP, Pr1}; cf.
Jantzen \cite{Jan1} for a review). In \cite{Pr1} Premet developed
new ideas to establish a long-standing conjecture of Kac and
Weisfeiler \cite{KW} for Lie algebras of reductive groups. Among
other things Premet made a crucial use of the powerful machinery
of support varieties for Lie algebras developed by
Friedlander-Parshall \cite{FP1}, Jantzen and others.

\subsection{ }

In this paper and its sequels we will initiate and develop
systematically the modular representations of Lie superalgebras
over an algebraically closed field $K$ of characteristic $p>2$. To
our best knowledge, there has not been any serious study in this
direction, perhaps because the representation theory of simple Lie
superalgebras over $\C$ (e.g. the irreducible character problem)
is already very difficult and remains to be better understood. It
turns out that the modular super representation theory is a very
promising direction full of nontrivial yet accessible problems and
conjectures, and it exhibits a novel phenomenon of nondefining
characteristics with primes $p$ and $2$.

There has been increasing interest in modular representation
theory of algebraic supergroups in connection with other areas in
recent years, thanks to the work of Brundan, Kleshchev, Kujawa and
others (see \cite{SW} for references and historical remarks). As
usual, when $\g$ is the Lie (super)algebra of an algebraic
supergroup $G$, the study of $\g$-modules with zero $p$-character
corresponds to the study of rational modules of $G$ or of its
Frobenius kernel. The current work puts the study of restricted
modules of Lie superalgebras in a wider context. It is possible
that some basic duality between categories of modular
representations of Lie superalgebras and Lie algebras will emerge
when the super theory is more adequately developed.

\subsection{}

For a (finite-dimensional) restricted Lie superalgebra $\g
=\ev{\g} \oplus \od{\g}$, one defines a $p$-character $\chi \in\ev
\g^*$ and the associated reduced enveloping superalgebra $U_\chi
(\g)$.
Let $\g_{\chi} =\g_{\chi,\bar{0}} \oplus \g_{\chi,\bar{1}}$ be the
centralizer of $\chi$ in $\g$ of codimension $d_0|d_1$. It is well
known that $d_0$ is even, but $d_1$ can possibly be odd. Let
$\lfloor a \rfloor$ denote the least integer upper bound of $a$.
We formulate at the end of  Section~\ref{sec:basics} the {\em
Super KW Conjecture} or {\em Super KW Property} which asserts that
{\em every $\rea{\chi}{\g}$-module has dimension divisible by
$p^{\frac{d_0}{2}}2^{\lfloor \frac{d_1}{2} \rfloor}$.} The
celebrated Kac-Weisfeiler conjecture (Premet's theorem) states
that the KW Property above holds for the Lie algebra $\g$ of a
reductive algebraic group (with $\od \g =0$ and $d_1=0$ above)
under mild assumptions on $p$.

The main result of this paper is that the Super KW Property as
formulated above holds for basic classical Lie superalgebras (the
general linear Lie superalgebras, though not simple, are also
included). In this paper we shall exclude the usual simple Lie
algebras from the basic classical Lie superalgebras, even though
all the proofs make sense for them as well.

For restricted Lie superalgebras, the theory of support varieties
has yet to be developed (see however an interesting construction
over $\C$ of Boe, Kujawa and Nakano \cite{BKN}). Our approach will
take full advantage of a combination of techniques developed in
Premet \cite{Pr1, Pr2} and in Skryabin \cite{Skr}, which allow us
to establish the Super KW Property for basic Lie superalgebras
with nilpotent $p$-characters bypassing completely the support
variety machinery. In addition, we establish a Morita equivalence
to reduce general $p$-characters to nilpotent ones, adapting
Friedlander-Parshall \cite{FP} (also see \cite{KW}) to the
superalgebra setup. At several places we have to find ways to
overcome new implications and difficulties which are not presented
in the usual Lie algebra setup. Let us explain in some detail.

\subsection{}

In Section~\ref{sec:grading} we construct a natural $\Z$-grading
on an arbitrary basic classical Lie superalgebra $\g$ associated
to a given nilpotent $p$-character $\chi$. For the Lie
superalgebras of type $\mathfrak{sl}$, $\mathfrak{gl}$ and
$\mathfrak{osp}$, our explicit construction, which works for every
prime $p>2$, is built on the one in Jantzen \cite{Jan2} for the
classical Lie algebras. For the exceptional Lie superalgebras, we
impose somewhat stronger conditions (which can presumably be
relaxed) on the prime $p$.

In Section~\ref{sec:proof}, we construct a $p$-subalgebra
$\mathfrak{m}$ of $\g$ from such a $\Z$-grading associated to a
nilpotent $p$-character $\chi$, following Premet \cite{Pr1} (the
cases with $d_1$ odd offer some new perspectives). We then use the
ingenious and elementary method in Skryabin \cite{Skr} to prove
that every simple $U_\chi (\g)$-module is free over the algebra
$U_\chi(\mathfrak{m})$ of dimension $\delta:
=p^{\frac{d_0}{2}}2^{\lceil \frac{d_1}{2} \rceil}$, where $\lceil
a \rceil$ denotes the largest integer lower bound of $a$. We are
done if $d_1$ is even; for $d_1$ odd, $\lfloor \frac{d_1}{2}
\rfloor -\lceil \frac{d_1}{2} \rceil=1$, and an extra factor $2$
required in the Super KW Conjecture is then supplied by a
$2$-dimensional endomorphism algebra of simple modules ``of type
$Q$", which is a pure super phenomenon.

For a basic classical Lie superalgebra $\g$, we further construct
a $K$-superalgebra (called a finite $W$-superalgebra)
$$
W_\chi(\g) = \text{End}_{\rea{\chi}{\g}}(
\rea{\chi}{\g}\otimes_{\rea{\chi}{\m}}K_{\chi}).
$$
and show that $\rea{\chi}{\g}$ is isomorphic to the matrix algebra
$M_{\delta} (W_\chi(\g)^{op})$, following Premet's argument
\cite{Pr2} with a modification to avoid completely the use of
support variety machinery. Again the type $Q$ phenomenon is
implicit behind the scene here. This provides a conceptual
explanation for the Super KW Property of $\g$. The structures and
representations of the superalgebra $W_\chi(\g)$ and its complex
counterpart deserve to be studied separately.

One can verify by inspection that the centralizer of a semisimple
(even) element in $\g$ is always a Levi subalgebra of $\g$, and
the case-by-case verification is elementary yet tedious due to the
existence of non-conjugate Borel subalgebras. In
Section~\ref{sec:gen. char.}, we establish a Morita equivalence
theorem which relates the reduced enveloping algebra
$\rea{\chi}{\g}$ with an arbitrary $p$-character $\chi$ of $\g$ to
that of a Levi subalgebra of $\g$ with a nilpotent $p$-character.
This is a superalgebra generalization of the classical results
\cite{KW, FP}, and our proof follows largely the general strategy
in \cite{FP}, with a few modifications to deal with the new super
features (e.g. the existence of non-conjugate Borel subalgebras
and three types of roots which give rise to three rank one Lie
superalgebras). This Morita equivalence together with the results
in Section~\ref{sec:proof} completes the proof of the Super KW
Conjecture for $\g$.

Finally in Section~\ref{sec:osp12}, we work out by hand completely
the representation theory of the simple Lie superalgebra
$\mathfrak{osp}(1|2)$ for all $p$-characters, which is very
similar to the $\mathfrak{sl}(2)$ case with some additional
interesting super type $Q$ phenomenon. In particular, we show that
there is no projective simple $U_\chi
(\mathfrak{osp}(1|2))$-module when $\chi$ is zero or regular
nilpotent, in contrast to the $\mathfrak{sl}(2)$ case.


\subsection{} Throughout we work with an algebraically
closed field $K$ with characteristic $p > 2$ as the ground field
(unless specified otherwise for the exceptional Lie
superalgebras). We exclude $p=2$ since in that case Lie
superalgebras coincide with Lie algebras.

A superspace is a $\Z_2$-graded vector space $V = \ev{V} \oplus
\od{V}$, in which we call elements in $\ev{V}$ and $\od{V}$ even and
odd, respectively . Write $|v| \in \Z_2$ for the parity (or degree)
of $v \in V$, which is implicitly assumed to be $\Z_2$-homogeneous.
A bilinear form $f$ on $V$ is {\em supersymmetric} if $f(u,v)
=(-1)^{|u||v|} f(v,u)$ for all homogeneous $u,v \in V$. We will use
the notation
$$
\sudim V = \dim \ev{V} | \dim \od V;\qquad \dim V =\dim \ev V +
\dim \od V.
$$
All Lie superalgebras $\g$ will be assumed to be finite
dimensional.  We will use $U(\g)$ to denote its universal
enveloping superalgebra.

According to Walls \cite{W}, the finite-dimensional simple
associative superalgebras over $K$ are classified into two types:
besides the usual matrix superalgebra (called type $M$) there are
in addition simple superalgebras of type $Q$. Alternatively, a
superalgebra analogue of Schur's Lemma states that the
endomorphism ring of an irreducible module of a superalgebra is
either one-dimensional or two-dimensional (in the latter case it
is isomorphic to a Clifford algebra), cf. e.g. Kleshchev
\cite[Chap.~12]{Kle}. An irreducible module is {\em of type $M$}
if its endomorphism ring is one-dimensional and it is {\em of type
$Q$} otherwise.
%

By vector spaces, derivations, subalgebras, ideals, modules, and
submodules etc. we mean in the super sense unless otherwise
specified.

\section{Basic results for restricted Lie superalgebras}  \label{sec:basics}

The materials in this Section (except
Subsect.~\ref{sec:KWproperty}) are standard generalizations from
Lie algebras and should not be surprising to experts, but we find
it convenient to formulate them precisely for the latter use.

\subsection{Restricted Lie Superalgebras}

The notion of restricted Lie superalgebras can be easily
formulated as follows (cf. e.g. Farnsteiner \cite{Far}).
\begin{definition}\label{def:res}
A Lie superalgebra $\g = \ev{\g} \oplus \od{\g}$ is called a \em
{restricted} Lie superalgebra, if there is a $p$th power map $\ev
\g \ra \ev \g$, denoted as $\pth{}$, satisfying
\begin{enumerate}
\item[(a)] $\pth{(kx)} = k^p \pth x$ for all  $k \in K$ and $x \in
\ev \g$,
\item[(b)] $[\pth x , y] = (\ad x)^p (y)$ for all $x \in \ev \g$
and $y \in \g$,
\item[(c)] $\pth{(x + y)} = \pth x + \pth y + \sum_{i= 1}^{p-1}
s_i(x,y)$ for all $x, y \in \ev \g$ where $is_i$ is the
coefficient of ${\lambda}^{i-1}$ in $(\ad(\lambda x +
y))^{p-1}(x)$.
\end{enumerate}
\end{definition}
In short, a restricted Lie superalgebra is a Lie superalgebra
whose even subalgebra is a restricted Lie algebra and the odd part
is a restricted module by the adjoint action of the even
subalgebra.

For example, the Lie algebra of an algebraic supergroup is
restricted (see \cite{SW}).

All the Lie (super)algebras in this paper will be assumed to be
restricted.

\subsection{Basic classical Lie superalgebras}\label{sec:bcLsa-p}

Following \cite{Kac} (and \cite{SNR1, SNR2}), we recall the list of
basic classical Lie superalgebras. In {\em loc. cit.}, the ground
field is $\C$. We observe that these Lie superalgebras (whose even
subalgebras are Lie algebras of reductive algebraic groups) are well
defined over $K$ and remain to be simple over $K$ of characteristic
$p>2$ (and $p>3$ in case of $D(2,1,\alpha)$ and $G(3)$), and they
admit an nondegenerate even supersymmetric bilinear form. This is
clear for the infinite series, and for the exceptional Lie
superalgebras it follows from the construction of \cite{SNR1, SNR2}
(it should be also possible via the method of contragredient Lie
superalgebras of Kac, cf. \cite{Kac} and the references therein).

In Table 1, we list all the basic classical Lie superalgebras over
$K$ with restrictions on $p$ (the general linear Lie superalgebra,
though not simple, is also included). We impose somewhat stronger
restrictions on the prime $p$ (which can probably be relaxed) for
the $3$ exceptional Lie superalgebras for our latter purpose of
constructing suitable $\Z$-gradings.

\vspace{.4cm}

\begin{center}
\begin{tabular}{|c|c|}
\hline
Lie superalgebra & Characteristic of $K$\\
\hline $\gl(m|n)$ & $p > 2$\\
\hline $\mathfrak{sl}(m|n)$ & $p > 2, p\nmid (m-n)$\\
\hline $B(m,n), C(n), D(m,n)$ & $p > 2$ \\
\hline $D(2,1;  \alpha)$ & $p > 3$ \\
\hline $F(4)$ & $p >15$ \\
\hline $G(3)$ & $p >15$ \\
\hline
\end{tabular}

\vspace{.1cm}
 TABLE 1: basic classical Lie $K$-superalgebras
\end{center}

\vspace{.4cm}

In this table, Lie superalgebra $D(2,1; \alpha)$, $\alpha \in K^*
\backslash \{0, -1\}$, is $17$-dimensional for which
$D(2,1;\alpha)_{\bar 0}$ is a Lie algebra of type $A_1 \oplus A_1
\oplus A_1$ and its adjoint representation on $D(2,1;\alpha)_{\bar
1}$ is $V \otimes V \otimes V$, where $V$ is the natural
representation of $A_1$.

The Lie superalgebra $F(4)$ is $40$-dimensional for which
$F(4)_{\bar 0}$ is a Lie algebra of type $B_3 \oplus A_1$ and its
adjoint representation on $F(4)_{\bar 1}$ is $U \otimes V$, where
$U$ is the $8$-dimension spin representation of $B_3$.

The Lie superalgebra $G(3)$ is $31$-dimensional for which
$G(3)_{\bar 0}$ is a Lie algebra of type $G_2 \oplus A_1$, and the
adjoint $G(3)_{\bar 0}$-module $G(3)_{\bar 1}$ is the tensor
product of the $7$-dimensional simple $G_2$-module with $V$. (We
thank Zongzhu Lin for helpful clarification on restriction of
prime $p$ for representations of Lie algebra of type $G_2$.)


\subsection{Reduced Enveloping Superalgebras}
Let $\g$ be a restricted Lie superalgebra. For each $x \in
\ev{\g}$, the element $x^p - \pth x \in U(\g)$ is central by
Definition \ref{def:res}. We refer to $\Zp(\g) = K\langle x^p -
\pth x \, |\,x \in \ev \g \rangle$ as the $p$-center of $U(\g)$.
Let $x_1, \ldots, x_s$ (resp. $y_1, \ldots, y_t$) be a basis of
$\ev \g$ (resp. $\od \g$). The following proposition is a
consequence of the PBW theorem for $U(\g)$.

\begin{proposition}\label{prop:pbw-U}
Let $\g$ be a restricted Lie superalgebra. Then $\Zp(\g)$ is a
polynomial algebra isomorphic to $K[x_i^p - x_i^{[p]} \vert \;
i=1,\ldots,s]$, and the enveloping superalgebra $U(\g)$ is free
over $\Zp(\g)$ with basis
\[
\{x_1^{a_1} \cdots x_s^{a_s} y_1^{b_1} \cdots y_t^{b_t} \,|\, 0
\leq a_i < p; \, b_j = 0, 1 \text{ for all }i, j \}.
\]
\end{proposition}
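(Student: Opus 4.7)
The plan is to deduce both assertions from the super PBW theorem by passing to the associated graded algebra with respect to the standard filtration on $U(\g)$. Let $F_\bullet U(\g)$ be the filtration with $F_n U(\g)$ spanned by products of at most $n$ generators from $\g$. Then $\gr F_\bullet U(\g) \cong S(\ev \g) \otimes \Lambda(\od \g)$ as superalgebras by the super PBW theorem, with basis the ordered monomials $x_1^{a_1}\cdots x_s^{a_s} y_1^{b_1}\cdots y_t^{b_t}$ ($a_i \geq 0$, $b_j \in \{0,1\}$). This is the workhorse of the whole argument.

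First I would confirm that each $z_i := x_i^p - x_i^{[p]}$ lies in the center of $U(\g)$: Definition \ref{def:res}(b) gives $[x_i^p,y] = (\ad x_i)^p(y) = [x_i^{[p]},y]$ for all $y\in\g$. Next I would show $\Zp(\g) = K[z_1,\dots,z_s]$. Jacobson's formula in the associative algebra $U(\g)$ of characteristic $p$ reads $(x+y)^p = x^p + y^p + \sum_{i=1}^{p-1} s_i(x,y)$, with the very same polynomials $s_i(x,y)$ as in Definition \ref{def:res}(c). Combining this with (a) and (c) of that definition, the map $\ev \g \to U(\g)$, $x\mapsto x^p - x^{[p]}$, is $p$-semilinear, so
\[
x^p - x^{[p]} \;=\; \sum_{i=1}^s k_i^p\, z_i \qquad \text{when } x = \sum_i k_i x_i .
\]
Hence $\Zp(\g)$ is generated as a $K$-algebra by $z_1,\dots,z_s$.

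The two remaining assertions (polynomiality of $\Zp(\g)$, and freeness of $U(\g)$ over $\Zp(\g)$ with the stated basis) I would handle simultaneously by comparing leading terms. Since $x_i^{[p]} \in F_1 U(\g)$ while $x_i^p \in F_p U(\g)$, the symbol of $z_i$ in $\gr U(\g)$ is precisely $x_i^p$. Thus for any multi-indices $C=(c_1,\dots,c_s)$, $A=(a_1,\dots,a_s)$ with $0\leq a_i<p$, and $B\in\{0,1\}^t$, the symbol of $z^C\, x^A y^B := z_1^{c_1}\cdots z_s^{c_s}\, x_1^{a_1}\cdots x_s^{a_s}\, y_1^{b_1}\cdots y_t^{b_t}$ is $x_1^{pc_1+a_1}\cdots x_s^{pc_s+a_s}\, y_1^{b_1}\cdots y_t^{b_t}$ in $S(\ev \g)\otimes\Lambda(\od \g)$. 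Because $0\leq a_i<p$, the pair $(C,A)$ is recovered from $pC+A$ by $p$-adic digit extraction, so these symbols are distinct PBW monomials and in particular linearly independent. This gives at once that $\{z^C\}$ is linearly independent (polynomiality of $\Zp(\g)$) and that the elements $\{z^C x^A y^B\}$ are linearly independent in $U(\g)$.

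For spanning, any PBW monomial $x_1^{e_1}\cdots x_s^{e_s} y^B$ with $e_i = p c_i + a_i$, $0\leq a_i<p$, can be rewritten using $x_i^p = z_i + x_i^{[p]}$ and an induction on the filtration degree, expressing it as a $\Zp(\g)$-linear combination of monomials $x^{A'} y^{B}$ with $0\leq a_i'<p$; the induction terminates because each substitution strictly decreases the total degree in the $x$-variables modulo multiplication by central $z_i$'s. Combined with the linear independence above, this shows that $\{x^A y^B : 0\leq a_i<p,\ b_j\in\{0,1\}\}$ is a free $\Zp(\g)$-basis of $U(\g)$, completing the proof.

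The main potential snag is the rewriting step for spanning: one must be careful that the substitution $x_i^p \mapsto z_i + x_i^{[p]}$, followed by reordering using the Lie brackets and super-commutation relations, actually decreases a well-chosen termination measure. The filtration-plus-leading-term framework above tames this, but the bookkeeping (especially the interaction of $p$-th powers with the $y_j$'s, which satisfy $y_j^2 = \tfrac12[y_j,y_j]\in \ev\g$) is where one needs to be careful.
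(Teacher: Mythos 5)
Your proof is correct and takes precisely the approach the paper alludes to: the paper states only that the proposition ``is a consequence of the PBW theorem for $U(\g)$'' and gives no further argument, and your write-up is the standard filtered/associated-graded fleshing-out of that remark (centrality via axiom (b), $p$-semilinearity of $x\mapsto x^p-x^{[p]}$ via Jacobson's formula matching axiom (c), distinctness of symbols by $p$-adic digit uniqueness, and a rewriting induction for spanning). One small simplification you could make: once you know the elements $z^C x^A y^B$ have pairwise distinct PBW symbols exhausting all monomials of $S(\ev\g)\otimes\Lambda(\od\g)$, surjectivity is automatic by the exhaustive filtration argument, so the explicit rewriting induction you flag as the potential snag can be bypassed.
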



\begin{proposition}
 Let $\g$ be a restricted Lie superalgebra. Then all irreducible
$U(\g)$-modules are finite-dimensional, and their dimensions are
bounded by a constant $M(\g)$ which depends only on $\g$.
\end{proposition}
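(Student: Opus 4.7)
The plan is to reduce the claim to the reduced enveloping superalgebras $\rea{\chi}{\g}$. Concretely, I will argue that every simple $U(\g)$-module $V$ factors through some $\rea{\chi}{\g}$ with $\chi \in \ev{\g}^*$; then Proposition \ref{prop:pbw-U} will give $\dim_K V \leq \dim_K \rea{\chi}{\g} = p^s 2^t$, a constant $M(\g)$ depending only on $\g$, where $s = \dim \ev{\g}$ and $t = \dim \od{\g}$.

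Let $V$ be a simple $U(\g)$-module. Because $\Zp(\g)$ is a central subalgebra of $U(\g)$ sitting inside $U(\g)_{\bar{0}}$, each $z \in \Zp(\g)$ acts on $V$ as a $U(\g)$-linear endomorphism, producing a $K$-algebra homomorphism $\rho \colon \Zp(\g) \to \End_{U(\g)}(V)_{\bar{0}}$. A graded Schur-type argument, based on the fact that the kernel and image of an even $U(\g)$-linear endomorphism are graded submodules of $V$, shows that $\End_{U(\g)}(V)_{\bar{0}}$ is a division ring. Hence $\rho(\Zp(\g))$ is a commutative integral domain, and $\mathfrak{p} := \ker \rho$ is a prime ideal of the polynomial algebra $\Zp(\g) \cong K[z_1, \ldots, z_s]$ from Proposition \ref{prop:pbw-U}.

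The main step, and the one I expect to be the main obstacle, is promoting $\mathfrak{p}$ to a maximal ideal with $\Zp(\g)/\mathfrak{p} \cong K$. Fixing a nonzero $v \in V$, simplicity forces $V = U(\g) v$, so $V$ has at most countable $K$-dimension, and the injection $\phi \mapsto \phi(v)$ shows that $\End_{U(\g)}(V)$ also has at most countable $K$-dimension. A standard Dixmier--Quillen argument, combined with the Hilbert Nullstellensatz for the affine $K$-algebra $\Zp(\g)$ over the algebraically closed $K$, then forces the integral domain $\Zp(\g)/\mathfrak{p}$ embedded in this small division ring to be algebraic over $K$, hence equal to $K$; equivalently, since $U(\g)$ is finite over its commutative center by Proposition \ref{prop:pbw-U}, Kaplansky's theorem for rings module-finite over their centers makes $\mathfrak{p}$ automatically maximal. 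The resulting character of $\Zp(\g)$ then corresponds, via the identification in Proposition \ref{prop:pbw-U}, to a unique $\chi \in \ev{\g}^*$ such that $(x^p - \pth{x})\, v = \chi(x)^p\, v$ for all $x \in \ev{\g}$ and $v \in V$.

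With the central character in hand, $V$ becomes a simple $\rea{\chi}{\g}$-module, and a second application of Proposition \ref{prop:pbw-U} gives $\dim V \leq \dim_K \rea{\chi}{\g} = p^s 2^t$. Setting $M(\g) := p^s 2^t$ completes the proof. The super setting introduces no new difficulty relative to the classical Lie-algebra argument, since $\Zp(\g)$ lies entirely inside $U(\g)_{\bar{0}}$ and the commutative-algebra input transfers verbatim; the only care needed is to invoke the even part of the graded endomorphism ring when applying Schur's lemma, a phenomenon dictated by the super structure that will also reappear elsewhere in the paper.
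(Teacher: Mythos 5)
Your argument is correct and matches the paper's intent: the paper simply cites Curtis and says the proof is ``the same as in the non-super case,'' and your write-up unpacks exactly that reduction (central character for $\Zp(\g)$ via graded Schur, maximality of the kernel from $U(\g)$ being module-finite over the affine central subalgebra $\Zp(\g)$, then the bound $\dim V\leq\dim U_\chi(\g)=p^{\dim\ev\g}2^{\dim\od\g}$). One small caveat: the Dixmier--Quillen route needs $K$ uncountable, so it is the ``module-finite over an affine central subalgebra'' argument (Curtis's, not really Kaplansky's) that one should lean on, but you do invoke it, so the proof stands.
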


\begin{proof}
The proof (using Proposition~\ref{prop:pbw-U}) is the same as in
the non-super case (\cite{Cur}), and will be skipped.
\end{proof}

Let $V$ be a simple $U(\g)$-module and $x \in \ev \g$. By Schur's
lemma, the central element $x^p - \pth x$ acts by a scalar
$\zeta(x)$, which can be written as $\chi_V(x)^p$ for some $\chi_V
\in \ev \g^*$. We call $\chi_V$ the {\em $p$-character} of the
module $V$.

Fix $\chi \in \ev \g^*$. Let $I_{\chi}$ be the ideal of $U(\g)$
generated by the even central elements $x^p - \pth x - \chi(x)^p$.
The quotient algebra $\rea{\chi}{\g} := U(\g)/I_{\chi}$ is called
{\em the reduced enveloping superalgebra} with $p$-character
$\chi$. We often regard $\chi \in \g^*$ by letting $\chi (\od \g)
=0$. For $\chi = 0$, then $U_0(\g)$ is called the {\em restricted
enveloping superalgebra}.

The following proposition is an easy consequence of
Proposition~\ref{prop:pbw-U}.

\begin{proposition}\label{prop:pbw-u}
The superalgebra $\rea{\chi}{\g}$ has a basis
\[
\{x_1^{a_1} \cdots x_s^{a_s} y_1^{b_1} \cdots y_t^{b_t} \,|\, 0
\leq a_i < p; \, b_j = 0, 1 \, \text{for all }i, j \}.
\]
In particular, $\dim \rea{\chi}{\g} = p^{\dim \ev\g} 2^{\dim
\od\g}$.
\end{proposition}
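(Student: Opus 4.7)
The plan is to reduce $\rea{\chi}{\g}$ to a base-change of $U(\g)$ along the quotient of its $p$-center, and then invoke the freeness statement from Proposition~\ref{prop:pbw-U} directly. First I would observe that the generators $z_i := x_i^p - \pth{x_i}$ of $\Zp(\g)$ are central in $U(\g)$, so the two-sided ideal $I_\chi$ is actually the $U(\g)$-module generated by the set $\{x^p - \pth{x} - \chi(x)^p : x \in \ev\g\}$.

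Next I would show that this set is $p$-semilinearly spanned by $\{z_i - \chi(x_i)^p : i=1,\ldots,s\}$, reducing the generating set of $I_\chi$ from $\ev\g$ to the chosen basis. For this I would verify that the map $\sigma: \ev\g \to U(\g)$, $x\mapsto x^p - \pth{x}$, is additive and $p$-semilinear. The scalar rule $\sigma(kx)=k^p\sigma(x)$ is immediate from Definition~\ref{def:res}(a) together with $(kx)^p = k^p x^p$ in $U(\g)$. For additivity, Jacobson's identity $(x+y)^p = x^p + y^p + \sum_{i=1}^{p-1} s_i(x,y)$ in the associative algebra $U(\g)$ (with the same universal polynomials $s_i$) cancels precisely with Definition~\ref{def:res}(c), giving $\sigma(x+y)=\sigma(x)+\sigma(y)$. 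Writing $x=\sum c_i x_i$ then yields $\sigma(x)-\chi(x)^p = \sum_i c_i^p(z_i - \chi(x_i)^p)$, so indeed $I_\chi$ is the two-sided ideal of $U(\g)$ generated by the $s$ central elements $z_i - \chi(x_i)^p$.

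Having identified $I_\chi$ with $U(\g)\cdot J$, where $J\subset\Zp(\g)$ is the maximal ideal generated by $z_i-\chi(x_i)^p$, I would use that $\Zp(\g)/J \cong K$ and combine with the freeness of $U(\g)$ over $\Zp(\g)$ from Proposition~\ref{prop:pbw-U} to get the isomorphism
\[
\rea{\chi}{\g} \;=\; U(\g)/I_\chi \;\cong\; U(\g)\otimes_{\Zp(\g)} K.
\]
Since tensoring a free module with $K$ sends a $\Zp(\g)$-basis to a $K$-basis, the stated monomials $x_1^{a_1}\cdots x_s^{a_s} y_1^{b_1}\cdots y_t^{b_t}$ with $0\le a_i<p$ and $b_j\in\{0,1\}$ descend to a $K$-basis of $\rea{\chi}{\g}$. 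Counting gives $p^s 2^t = p^{\dim\ev\g} 2^{\dim\od\g}$, proving the dimension formula.

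The only point that requires any thought is the $p$-semilinearity/additivity of $\sigma$, which is the main obstacle: one must invoke both Jacobson's formula in the associative algebra $U(\g)$ and property (c) of the restricted structure so that the $s_i(x,y)$ terms cancel. Everything else is a direct application of the PBW freeness in Proposition~\ref{prop:pbw-U}.
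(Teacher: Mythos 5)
Your proof is correct and is essentially the argument the paper has in mind: the paper simply remarks that the proposition is an easy consequence of Proposition~\ref{prop:pbw-U}, and your write-up spells out exactly the intended deduction — identify $I_\chi$ with $U(\g)\cdot J$ for the maximal ideal $J\subset\Zp(\g)$ generated by $z_i-\chi(x_i)^p$ (using the $p$-semilinearity of $x\mapsto x^p-x^{[p]}$, which cancels the $s_i$-terms between Jacobson's associative identity and Definition~\ref{def:res}(c)), and then base-change the free $\Zp(\g)$-module $U(\g)$ along $\Zp(\g)\twoheadrightarrow K$.
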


\begin{remark}\label{rem:rea}
The adjoint algebraic group $\ev G$ of $\ev \g$ acts on $\g$ by
adjoint action since $\od \g$ is a rational $\ev G$-module. The
representation theory of the superalgebra $\rea{\chi}{\g}$ depends
only on the orbit of $\chi$ under the coadjoint action of $\ev G$,
since $\rea{\chi}{\g}\simeq \rea{\chi'}{\g}$ for $\chi$ and
$\chi'$ in the same orbit.
\end{remark}

\subsection{The baby Verma modules}\label{sec:ind-mod}
Let $\g$ be one of the Lie superalgebras in
Section~\ref{sec:bcLsa-p}. Fix a triangular decomposition
\[
\g =\mathfrak{n}^- \oplus \mathfrak{h} \oplus \mathfrak{n}^+,
\]
where $\mathfrak{n}^+ =\mathfrak{n}^+_{\bar 0}
+\mathfrak{n}^+_{\bar 1} $ (resp. $\mathfrak{n}^-$) is the Lie
subalgebra of positive (resp. negative) root vectors, and
$\mathfrak{h}$ is a Cartan (even) subalgebra of $\ev \g$ of rank
$l$.

By Remark~\ref{rem:rea} we may choose $\chi \in \ev \g^*$ with
$\chi(\mathfrak{n}^+_{\bar 0}) = 0$ without loss of generality.
Let $\mathfrak{b}= \mathfrak{h} \oplus \mathfrak{n}^+$.
Each $\lambda \in \mathfrak{h}^*$ defines a one-dimensional
$\mathfrak{h}$-module $K_{\lambda}$ where each $h \in
\mathfrak{h}$ acts as multiplication by $\lambda(h)$. The module
$K_{\lambda}$ is a $\rea{\chi}{\mathfrak{h}}$-module if and only
if $\lambda(h)^p - \lambda(\pth h) = \chi(h)^p$ for all $h \in
\mathfrak{h}$.
Set
\begin{align*}
\Lambda_{\chi} &= \{ \lambda \in \mathfrak{h}^* \vert \;
\lambda(h)^p - \lambda(\pth h) = \chi(h)^p \text{ for all } h \in
\mathfrak{h} \}.
\end{align*}
Note that $|\Lambda_{\chi}| = p^{\dim \h}$. Now for $\lambda \in
\Lambda_{\chi}$, $K_\la$ is regarded as a $\mathfrak{b}$-module
with $\mathfrak{n}^+$ acting trivially, and the {\em baby Verma
module} is defined to be the induced module
\[
Z_{\chi}(\lambda) : = \rea{\chi}{\g}
\otimes_{\rea{\chi}{\mathfrak{b}}} K_{\lambda}.
\]

\subsection{$p$-nilpotent Lie superalgebras}

A restricted Lie superalgebra $\g$ is called {\em $p$-nilpotent}
if, for each $x \in \ev \g$, there exists $r > 0$ such that
$x^{[p]^r} = 0$.

\begin{proposition} \label{!simple}
If $\g$ is a $p$-nilpotent Lie superalgebra, then each
$\rea{\chi}{\g}$ has a unique simple module (up to isomorphism).
Moreover, if $\chi =0$, then the trivial module is the only simple
module.
\end{proposition}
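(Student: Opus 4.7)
The plan is induction on $\dim\g$, with base case $\g=0$ being trivial (since $\rea{\chi}{\g}=K$). For the inductive step I seek a nonzero graded-central element of $\g$ that acts as a $\chi$-determined scalar on any simple $V$, then reduce to a quotient and apply the inductive hypothesis.

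First I show $Z(\g)\neq 0$ via super Engel. Every element of $\g$ acts ad-nilpotently on $\g$: for $x\in\ev\g$, $(\ad x)^{p^r}=\ad x^{[p]^r}=0$ eventually by Definition~\ref{def:res}(b) and $p$-nilpotency; for $y\in\od\g$, $(\ad y)^2=\ad\tfrac12[y,y]$ is the adjoint of a $p$-nilpotent even element, so $(\ad y)^{2p^s}=0$ for $s$ large. If $Z(\g)\cap\ev\g\neq 0$, pick a nonzero $z_0$ there; since $[p]$-iterates of even central elements remain central (as $[\pth{z_0},-]=(\ad z_0)^p=0$), setting $z=z_0^{[p]^{r-1}}$ for the minimal $r$ with $z_0^{[p]^r}=0$ yields $0\neq z\in Z(\g)\cap\ev\g$ with $\pth z=0$. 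Then $(z-\chi(z))^p=\pth z=0$ in $\rea{\chi}{\g}$, so the even central element $z-\chi(z)$ is nilpotent, and by the super Schur lemma recalled in Section~1 it acts as the scalar $0$ on any simple $V$. Otherwise $Z(\g)\cap\od\g\neq 0$: for a nonzero odd central $y$, super-Jacobi together with $[y,\g]=0$ gives $[x,[y,y]]=0$ for all $x\in\g$, so $[y,y]\in Z(\g)\cap\ev\g=0$, hence $y^2=0$ in $\rea{\chi}{\g}$; then $\ker y$ is a nonzero submodule of $V$, so $yV=0$.

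In either case $V$ descends to a quotient of $\rea{\chi}{\g}$ by a nilpotent central element. In the odd case this quotient is $\rea{\chi}{\g/Ky}$, a reduced enveloping algebra of a strictly smaller $p$-nilpotent restricted Lie superalgebra, so the inductive hypothesis applies directly. In the even case the quotient $\rea{\chi}{\g}/(z-\chi(z))$ agrees with $\rea{\bar\chi}{\g/Kz}$ up to a scalar ``cocycle'' shift in commutators that is present only when $\chi(z)\neq 0$; this shift can be absorbed by a modest strengthening of the inductive hypothesis (to allow such central scalar modifications), or else handled by a Kirillov-style polarization argument. For the last assertion, $\chi=0$ makes the chosen central element act as $0$, so the quotient's unique simple module is the trivial one by induction, and inflation gives $V=K$.

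The main technical hurdles are (i) the super Engel input guaranteeing $Z(\g)\neq 0$, and (ii) the cocycle bookkeeping in the even case when $\chi(z)\neq 0$; the odd-centre reduction via a nonzero $y\in Z(\g)\cap\od\g$ with $y^2=0$ is the genuinely new super feature, the remainder being a super-adaptation of the classical $p$-nilpotent argument.
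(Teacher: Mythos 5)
Your overall scheme---super Engel to produce a nonzero graded centre, then reduce and induct---is the right shape, and the odd-central-element step is carried out correctly and is indeed the genuinely new super feature. There is, however, a real gap in the even case that you flag but do not close, and neither of your two proposed repairs is actually a patch to the inductive argument you set up.

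Concretely, when $z \in Z(\g)\cap\ev\g$ has $z^{[p]}=0$ and $\chi(z)\neq 0$, the quotient $A := \rea{\chi}{\g}/(z-\chi(z))$ is \emph{not} of the form $\rea{\bar\chi}{\g/Kz}$ for any linear $\bar\chi$: besides the cocycle shift in the commutator relations that you mention, the relations $\bar x^p = \bar x^{[p]} + \chi(x)^p + \alpha(x)\chi(z)$ (where $\alpha(x)$ is the $Kz$-coefficient of $x^{[p]}$) also acquire scalar shifts, and $\chi(x)^p + \alpha(x)\chi(z)$ is in general neither additive in $x$ nor independent of the choice of coset representative. The (even) Heisenberg algebra with $\chi$ nonzero on the centre is a concrete counterexample: there $A$ is the restricted Weyl algebra $\cong M_p(K)$, a noncommutative simple algebra, whereas every reduced enveloping algebra of the abelian quotient $\g/Kz$ is commutative. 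So the inductive step does not land on a strictly smaller instance of the same statement. Your ``modest strengthening'' would have to admit cocycle-twisted reduced enveloping algebras, but such a twist is itself $U(\tilde\g)/(c-\text{scalar})$ for a central extension $\tilde\g$ of the same dimension, so it is not evident that the induction terminates; and a Kirillov/Vergne polarization argument is a genuinely different proof, not a repair of this one. For the record, the paper applies this proposition only to $\m$ and $\m'$ with $\chi$ vanishing on the $p$-closure of the commutator, where the difficulty evaporates: one can then pick the linear functional $\eta$ with $\eta^p - \eta\circ[p]=\chi^p$ and $\eta([\m,\m])=0$, so that $x\mapsto x-\eta(x)$ gives an algebra isomorphism $\rea{0}{\m}\xrightarrow{\sim}\rea{\chi}{\m}$, and the Engel/induction argument applied to $\rea{0}{\m}$ finishes cleanly; that is essentially the argument of the cited Jantzen reference. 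The general-$\chi$ statement is true but requires the polarization argument you allude to, which your proof does not supply.
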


\begin{proof}
The proof is essentially the same as for $p$-nilpotent Lie
algebras (see, for example, proofs of Proposition 3.2 and
Theorem~3.3 in \cite{Jan1}), and will be skipped here.
\end{proof}

\subsection{$U_\chi (\g)$ as Frobenius and symmetric algebras}
Recall that the supertrace of an endomorphism $X$ on a vector
space $\ev V \oplus \od V$ is defined to be $ \text{str}(X) =
\text{tr}(X|_{\ev V}) - \text{tr}(X|_{\od V}).$ An associative
superalgebra $A$ with a supersymmetric nondegenerate bilinear form
will be referred to as a {\em symmetric (super)algebra}. The
standard properties for a symmetric algebra remain valid for
symmetric superalgebras. The following is a generalization of a
result of Friedlander-Parshall \cite{FP} for restricted Lie
algebras, and it can be proved in the same way.
\begin{proposition}\label{prop:Fro}
For each $\chi \in \ev \g^*$, the reduced enveloping superalgebra
$\rea{\chi}{\g}$ is a Frobenius algebra (with a nondegenerate
bilinear form denoted by $\bar{\mu}$). Moreover, $\bar{\mu}$ is
supersymmetric if and only if $\text{str}(\ad x) = 0$ for all $x
\in \ev \g$. In particular, if $\g$ is simple, then $\rea{\chi}{\g}$
is a symmetric superalgebra.
\end{proposition}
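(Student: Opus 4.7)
The plan is to adapt the Friedlander--Parshall argument for restricted Lie algebras to the superalgebra setting, using the explicit PBW basis of Proposition~\ref{prop:pbw-u}. Order the PBW basis of $\rea{\chi}{\g}$ with even generators $x_1,\ldots,x_s$ preceding odd generators $y_1,\ldots,y_t$, and let $T := x_1^{p-1}\cdots x_s^{p-1}\, y_1 y_2\cdots y_t$ denote the ``top'' monomial. Define $\mu\colon \rea{\chi}{\g}\to K$ to be the linear functional sending $T$ to $1$ and every other PBW basis vector to $0$, and set $\bar\mu(a,b):=\mu(ab)$. Associativity $\bar\mu(ab,c)=\bar\mu(a,bc)$ is automatic, and $\bar\mu$ is $\Z_2$-graded of parity $\bar t\in\Z_2$.

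To establish nondegeneracy, I would filter $\rea{\chi}{\g}$ by total PBW degree. The associated graded algebra is the tensor product $K[x_1,\ldots,x_s]/(x_i^p)\otimes\Lambda(y_1,\ldots,y_t)$, a local Frobenius algebra whose socle is one-dimensional and spanned by the image of $T$. Given $0 \ne a\in\rea{\chi}{\g}$, select a PBW monomial $x^{\alpha}y^{\beta}$ of maximal total degree occurring in $a$ with nonzero coefficient, and let $b$ be the ``complementary'' monomial $x_1^{p-1-\alpha_1}\cdots x_s^{p-1-\alpha_s} y_1^{1-\beta_1}\cdots y_t^{1-\beta_t}$ (up to a sign determined by the ordering). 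Expanding $ab$ in the PBW basis, the leading term is a nonzero scalar multiple of $T$; the commutator corrections arising from rearrangements strictly decrease the PBW filtration and therefore contribute nothing to $\mu(ab)$. Hence $\bar\mu(a,b)\ne 0$, proving that $\rea{\chi}{\g}$ is a Frobenius superalgebra.

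For the supersymmetry criterion, note that $\bar\mu$ is supersymmetric if and only if $\mu$ vanishes on every super-commutator $[a,b]_s := ab - (-1)^{|a||b|}ba$. Because $\rea{\chi}{\g}$ is generated as an algebra by $\g$, the super Leibniz rule reduces this to checking $\mu([x,u]_s)=0$ for $x\in\g$ homogeneous and $u$ a PBW monomial. A direct computation---$\ad x$ preserves the PBW filtration and acts as a derivation on the associated graded, and on the top form $T$ it acts by the scalar $-\text{str}(\ad x)$---yields the key identity
\[
\mu([x,u]_s)\;=\;-\,\text{str}(\ad x)\cdot \mu(u)
\]
for all $x\in\ev\g$ and all $u$. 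For $x\in\od\g$ the supertrace $\text{str}(\ad x)$ vanishes automatically because $\ad x$ is an odd endomorphism, and parity considerations likewise force $\mu([x,u]_s)=0$. The equivalence in the proposition follows by specializing $u=T$ (for the forward direction) and a standard inductive use of the identity (for the reverse). Finally, if $\g$ is simple then $[\g,\g]_s=\g$ gives $\ev\g=[\ev\g,\ev\g]+[\od\g,\od\g]$, and since the supertrace of any supercommutator of endomorphisms vanishes, $\text{str}(\ad\cdot)\equiv 0$ on $\ev\g$ and $\rea{\chi}{\g}$ is symmetric in the super sense.

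The principal obstacle will be the sign bookkeeping in the supersymmetry step. Verifying the displayed identity against the top monomial requires tracking signs introduced by anticommuting odd variables, the reductions $y_i^2=\tfrac12[y_i,y_i]\in U(\ev\g)$, and the correction terms $s_i(x,y)$ hidden in the $p$-power map of Definition~\ref{def:res}. Each of these contributions lives in strictly lower PBW filtration and is therefore invisible to $\mu$, which is exactly what makes the Friedlander--Parshall strategy transplant cleanly to the super setting; but this collapse of lower-order terms must be justified, not merely assumed.
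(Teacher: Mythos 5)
Your proposal is correct, and it fills in exactly what the paper leaves implicit: the paper proves Proposition~\ref{prop:Fro} by citing Friedlander--Parshall and asserting it ``can be proved in the same way,'' and your write-up is a faithful superization of that argument (top PBW monomial $T$, the functional $\mu$, nondegeneracy via the associated graded truncated-polynomial-tensor-exterior algebra, and the identity $\mu([x,u]_s)=-\mathrm{str}(\ad x)\mu(u)$ on the graded socle).
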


\subsection{The Super KW Property}  \label{sec:KWproperty}

Let $\chi\in \ev \g^*$ and we always regard $\chi \in \g^*$ by
setting $\chi(\od \g) = 0$. Denote the centralizer of $\chi$ in
$\g$ by $\g_{\chi} = \g_{\chi,\bar 0} + \g_{\chi,\bar 1}$, where
$\g_{\chi,i} = \{ y \in \g_i \vert \; \chi([y, \g]) = 0 \}$ for $i
\in \Z_2$. Set $d_i = \dim \g_i - \dim \g_{\chi,i}$. Recall that
$\lfloor a \rfloor$ denotes the least integer upper bound of $a$.

We formulate the following superalgebra generalization of the
Kac-Weisfeiler Conjecture.

\vspace{.2cm}

{\noindent \bf Super KW Property}. {\em The dimension of every
$\rea{\chi}{\g}$-module is divisible by
$p^{\frac{d_0}{2}}2^{\lfloor \frac{d_1}{2} \rfloor}$.}


It is well known that $d_0$ is an even integer and $d_1$ can
possibly be odd. When $\g$ is the Lie algebra of a reductive
algebraic group (i.e. $\od \g =0$), the celebrated Kac-Weisfeiler
(KW) conjecture \cite{KW} states that the (Super) KW Property
holds (where no $2$-power is involved). The original KW conjecture
has been completed by Premet \cite{Pr1}.

It is known that the KW Property holds for many interesting
examples beyond the setup of the original KW conjecture, and
nevertheless, it does not hold for all Lie algebras (and so
superalgebras).
\begin{question}
For which Lie superalgebras does the Super KW Property hold?
\end{question}

The main goal of this paper is to establish the Super KW
Conjecture for all the basic classical Lie superalgebras with
assumptions on $p$ as in the Table of Section~\ref{sec:bcLsa-p}.
{\bf In the remainder of this paper, $\g$ is assumed to be one of
the basic classical Lie superalgebras with such a restriction on
$p$}.

\section{The $\Z$-gradings of Lie superalgebras}\label{sec:grading}

\subsection{The $\Z$-gradings with favorable properties}
Let $\g$ be one of the basic classical Lie superalgebras in
Sect.~\ref{sec:bcLsa-p}. The Lie superalgebra $\g$ admits a
nondegenerate invariant even bilinear form $(\cdot, \cdot)$, whose
restriction on $\ev \g$ gives an isomorphism $\g_{\bar{0}}
\xrightarrow{\sim} \g_{\bar{0}}^*$. Let $\chi \in \g_{\bar{0}}^*$
be a nilpotent character, that is, it is the image of some
nilpotent element $X$ in $\g_{\bar{0}}$ under the above
isomorphism. Then $\g_{\chi}$ is equal to the usual centralizer
$\g_X = \{y \in \g |\; [X, y]= 0\}$.

\begin{theorem}\label{th:grading}
Let $\g$ be one of the basic classical Lie superalgebras in
\ref{sec:bcLsa-p}. Then there exists a $\Z$-grading $\g =
\oplus_{k \in \Z}\g(k)$ satisfying:
\begin{equation}\label{grading1}
X \in \g(2);
\end{equation}
\begin{equation}\label{grading2}
(\g(k), \g(l)) = 0, \quad \text{if }  k+l \neq 0;
\end{equation}
\begin{equation}\label{grading3}
\g_X = \oplus_{k \in \Z} \g_X(k) \qquad \text{where }
\g_X(k)=\g_X \cap \g(k);
\end{equation}
\begin{equation}\label{grading4}
\g_X(s)=0 \qquad \forall \, s < 0;
\end{equation}
\begin{equation}\label{grading5}
\sudim \g_X = \sudim \g(0) + \sudim \g(1).
\end{equation}
This grading is compatible with the $\Z_2$-grading, i.e.
$\g(k)=\g(k)_{\bar 0} \oplus \g(k)_{\bar 1}$ where $\g(k)_i =
\g(k) \cap \g_i$ for $i \in \Z_2, k \in \Z$.
\end{theorem}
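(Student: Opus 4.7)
The overall plan is to realize the $\Z$-grading as the weight decomposition under a cocharacter $\lambda\colon K^*\to \ev G$ of the adjoint group of $\ev\g$ satisfying $\Ad(\lambda(t))X=t^{2}X$, and then verify the five properties from this uniform description. Since $\od\g$ is a rational $\ev G$-module, the decomposition $\g=\bigoplus_k\g(k)$ automatically respects the $\Z_2$-grading, giving the final compatibility statement. Property \eqref{grading1} is built in; \eqref{grading2} follows at once from the $\ev G$-invariance of the bilinear form, since $(y_k,y_l)=t^{k+l}(y_k,y_l)$ for all $t$ forces orthogonality when $k+l\neq 0$; \eqref{grading3} holds because $\g_X$ is stable under $\Ad(\lambda(t))$ (if $[X,y]=0$, then $[X,\Ad(\lambda(t))y]=t^{-2}\Ad(\lambda(t))[X,y]=0$). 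The substantive statements are \eqref{grading4} and \eqref{grading5}; together they amount to saying the grading is \emph{good} in the sense that $\ad X\colon \g(k)\to\g(k+2)$ is injective for $k<0$ and surjective for $k\geq 0$, and both formulations reduce to careful dimension counting in each case.

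For the classical families $\gl(m|n)$, $\mathfrak{sl}(m|n)$, and $\osp$, the plan is to adapt Jantzen's pyramid construction directly. Writing $X=X_{\bar 0}$ in block form relative to the natural $\Z_2$-graded module $V=\ev V\oplus\od V$, we obtain two nilpotent matrices $X_1,X_2$ with prescribed Jordan types; Jantzen's combinatorial pyramids yield cocharacters $\lambda_1,\lambda_2$ into the factors of $\GL(\ev V)\times\GL(\od V)$, and their product defines $\lambda$. For $\mathfrak{sl}(m|n)$ the same cocharacter works (one shifts by an even scalar if needed to land in the correct subgroup), and for $\osp$ one uses Jantzen's symmetric/symplectic pyramids so that $\lambda$ preserves the defining bilinear form. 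The off-diagonal piece $\od\g$ then decomposes under $\lambda$ as a Hom between Jordan types; properties \eqref{grading4}-\eqref{grading5} for this piece follow by reducing the injectivity/surjectivity of $\ad X$ on $\Hom(\ev V,\od V)\oplus\Hom(\od V,\ev V)$ to Jantzen's classical statements for the single blocks of $X_1$ and $X_2$, since no characteristic restrictions beyond $p>2$ enter.

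For the three exceptional Lie superalgebras the plan is different: use that the hypotheses on $p$ (namely $p>3$ for $D(2,1;\alpha)$ and $p>15$ for $F(4)$ and $G(3)$) are strong enough for the Jacobson--Morozov theorem to embed $X$ into an $\mathfrak{sl}_2$-triple $(X,H,Y)$ inside $\ev\g$, and set $\g(k)=\{y\in\g\mid [H,y]=ky\}$. All properties then fall out of finite-dimensional $\mathfrak{sl}_2$-representation theory applied to $\g$ viewed as a restricted $\mathfrak{sl}_2$-module via $\ad$: property \eqref{grading2} is the orthogonality of distinct weight spaces under an invariant form, \eqref{grading3} says $\ad H$ preserves $\ker(\ad X)$, \eqref{grading4} says $X$ is injective on negative weight spaces (standard), and \eqref{grading5} reduces to the fact that each $\mathfrak{sl}_2$-isotypic component of dimension $n$ contributes exactly one basis vector to $\g_X$ sitting in $\g(n-1)$, so $\dim \g_X$ equals $\dim \g(0)+\dim \g(1)$; the argument respects parities because $\od\g$ is itself a (direct sum of) rational $\mathfrak{sl}_2$-modules as listed in the specific $\ev G$-module descriptions of $\od\g$ given in Section~\ref{sec:bcLsa-p}.

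The main obstacle I anticipate is verifying \eqref{grading5} (and the associated "good grading" property) cleanly across all cases. For the classical series, the pyramid construction makes this a bookkeeping exercise, but the odd block requires correlating the two pyramids so that $\ad X$ from $\g(-2)_{\bar 1}$ into $\g(0)_{\bar 1}$ is injective; this is the delicate combinatorial step. For the exceptional cases, once Jacobson--Morozov is granted, the bounds on $p$ ensure that $\ad X$ acts as in characteristic zero on the listed $\mathfrak{sl}_2$-modules, so \eqref{grading5} follows from the classical fact that the dimension of the zero-weight plus one-weight space of any finite-dimensional $\mathfrak{sl}_2$-module equals the dimension of the $X$-kernel. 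The delicate prime restrictions in the exceptional cases are exactly what is needed to guarantee this kernel count via standard $\mathfrak{sl}_2$-theory.
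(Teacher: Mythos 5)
Your proposal is essentially correct and takes the same approach as the paper: Jantzen's pyramid/Jordan-basis construction on the natural module $V$ for the $\gl$, $\mathfrak{sl}$, and $\osp$ types (the paper builds the grading on $V$ directly and records the cocharacter reformulation as a remark), and Jacobson--Morozov via Carter's theorem (lifting $KH$ to a torus in the simply connected group, rather than reading off $\ad H$-eigenvalues directly, to sidestep any mod-$p$ ambiguity) for the exceptional types, with the strong $p$-bounds ensuring complete reducibility of the adjoint $\mathfrak{sl}(2)$-action on both $\ev\g$ and $\od\g$.
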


We shall construct the $\Z$-gradings in the following subsections
according to the types of the Lie superalgebras. The constructions
for the Lie superalgebras of $\gl$ and $\osp$ types are natural
generalizations of those for classical Lie algebras, cf.
\cite[Chap.~1-5]{Jan2}.

\subsection{The $\Z$-gradings for $\gl$}\label{sec:gl-grading}

Let $V= V_{\bar{0}} \oplus V_{\bar{1}}$ be a vector space with
$\sudim V =m|n$. Identify $\gl(m| n)$ with $\gl(V)$, whose even
part is $\gl(V)_{\bar{0}}  =\gl(V_{\bar{0}}) \oplus
\gl(V_{\bar{1}})$. To a nilpotent element $X$ in
$\gl(V)_{\bar{0}}$, we associate with a pair of partitions
$(\pi_0, \pi_1)$, where $\pi_i = (\lambda_1^i, \ldots,
\lambda_{r_i}^i)$ of length $r_i$ is the shape of the Jordan
canonical form of the summand of $X$ in $\gl(V_i)$ respectively.

There exist $v_1, \ldots, v_{r_0} \in V_{\bar{0}}$ (resp. $u_1,
\ldots, u_{r_1} \in V_{\bar{1}}$) such that all $X^j v_i$ (resp.
$X^j u_i$) with $1 \leq i \leq r_0 \;(\text{resp. } r_1)$, $0 \leq
j < \lambda^0_i \; (\text{resp. } 0 \leq j < \lambda^1_i)$ are a
basis for $V_{\bar{0}}$ (resp. $V_{\bar{1}}$) and such that
$X^{\lambda^0_i}v_i = 0$ (resp. $X^{\lambda^1_i}u_i = 0$).

Each homogeneous $Z \in \gl(V)_X$ is determined by $Z(v_i)$ and
$Z(u_j)$ with $1 \leq i \leq r_0$, $1 \leq j \leq r_1$ because
$Z(X^k v_i)= X^k Z(v_i)$ and $Z(X^k u_j)= X^k Z(u_j)$ for all $i,
j$, and $k$. Furthermore, $X^{\lambda^0_i}Z(v_i)=0$ and
$X^{\lambda^1_j}Z(u_j)= 0$ for all $i$ and $j$. Using this, one
checks that if $Z$ is even, then $Z(v_i)$ and $Z(u_j)$ have the
forms
\begin{equation}\label{eq:cen-gl-ev1}
Z(v_i)= \sum_{l=1}^{r_0} \sum_{k=\max\{0, \lambda^0_l -
\lambda^0_i\}}^{\lambda^0_l - 1} a_{k,l;i}X^k v_l,
\end{equation}
\begin{equation}\label{eq:cen-gl-ev2}
Z(u_j)= \sum_{l=1}^{r_1} \sum_{k=\max\{0, \lambda^1_l -
\lambda^1_j\}}^{\lambda^1_l - 1} b_{k,l;j}X^k u_l.
\end{equation}
If $Z$ is odd, then
\begin{equation}\label{eq:cen-gl-od1}
Z(v_i)= \sum_{l=1}^{r_1} \sum_{k=\max\{0, \lambda^1_l -
\lambda^0_i\}}^{\lambda^1_l - 1} c_{k,l;i}X^k u_l,
\end{equation}
\begin{equation}\label{eq:cen-gl-od2}
Z(u_j)= \sum_{l=1}^{r_0} \sum_{k=\max\{0, \lambda^0_l -
\lambda^1_j\}}^{\lambda^0_l - 1} d_{k,l;j}X^k v_l.
\end{equation}
The coefficients $a_{k,l;i}$, $b_{k,l;j}$, $c_{k,l;i}$, and
$d_{k,l;j}$ above can be chosen arbitrarily in $K$. We compute
that
\begin{align*}
\dim \gl(V)_{X,{\bar 0}} &= \sum_{i,j=1}^{r_0} 
\text{min}(\lambda_i^0, \lambda_j^0) + \sum_{i,j=1}^{r_1}
\text{min}(\lambda_i^1, \lambda_j^1),\\
\dim \gl(V)_{X, {\bar 1}} &= 2 \sum_{i=1}^{r_0} \sum_{j=1}^{r_1}
\text{min}(\lambda^0_i, \lambda^1_j).
\end{align*}

Define a $\Z$-grading of the vector space $V$ (which is compatible
with the $\Z_2$-grading) as follows. Set $V(k)_{\bar{0}}$ (resp.
$V(k)_{\bar{1}}$) equal to the span of $X^j v_i$ with $k=2j + 1 -
\lambda^0_i$ (resp. $X^j u_i$ with $k= 2j + 1 - \lambda^1_i$).
Note that
\begin{equation}\label{eq:grading-gl-0}
\sudim V(l) = \sudim V(-l),
\end{equation}
\begin{equation}\label{eq:grading-gl-1}
X V(l) \subseteq V(l + 2).
\end{equation}

\begin{remark}
The grading defined above can be thought of coming from the
$\mathfrak{sl}(2)$-theory in characteristic zero. Indeed, let
$e,f$, and $h$ be the standard basis of $\mathfrak{sl}(2)$. We can
make $V$ into an $\mathfrak{sl}(2)$-module such that $e$ acts as
$X$, $h$ acts as on each $V(m)$ as multiplication with $m$, and
$f$ annihilates all $v_i$ and $u_j$, and maps each $X^k v_i$ and
$X^k u_j$ to suitable multiples of $X^{k-1}v_i$ and $X^{k-1}u_j$
respectively.
\end{remark}

The grading on $V$ induces a grading $\gl(V)= \oplus_{k \in \Z}
\gl(V)(k)$ with
\begin{equation}\label{glgrading1}
\gl(V)(k)=\{ f \in \gl(V) | f(V(l)) \subset V(l + k) \quad
\text{for all} \, l \in \Z \}.
\end{equation}

This gives a $\Z \times \Z_2$-grading on the Lie superalgebra,
i.e.
\begin{equation}\label{glgrading2} [\gl(V)(k),
\gl(V)(l)] \subset \gl(V)(k + l),
\end{equation}
and
\begin{equation}\label{glgrading3}
\gl(V)(k)= \gl(V)(k)_{\bar{0}} \oplus \gl(V)(k)_{\bar{1}},
\end{equation}
where $\gl(V)(k)_i = \gl(V)_i \cap \gl(V)(k)$.

We have
\[
X \in \gl(V)(2)
\]
by (\ref{eq:grading-gl-1}). This implies easily that
\[
\gl(V)_X = \bigoplus_{k \in \Z} \gl(V)_X(k) \quad \text{ where
}\gl(V)_X(k)= \gl(V)_X \cap \gl(V)(k).
\]

\begin{example}
Let $V$ be a superspace of $\sudim V = 3|2$. So $\gl(V)$ is
isomorphic to the Lie superalgebra $\gl(3|2)$ consisting of all
$(3+2) \times (3+2)$ square matrices. Let $X$ be a nilpotent
element in $\ev{\gl(V)}$ corresponding to the pair of partitions
$(3;2)$. Then there exist $v_1 \in \ev V$ and $u_1 \in \od V$ such
that $\{ X^2 v_1, X v_1 , v_1\}$ and $\{X u_1 , u_1 \}$ form bases
of $\ev V$ and $\od V$ respectively. Under this basis of $V$, the
element $X$ has matrix form
\begin{equation*}
\left[
\begin{array}{ccc|cc}
0 &1 & & & \\
&0 &1 & &\\
& & 0 & &\\
\hline
& & &0 &1\\
& & & &0
\end{array}
\right]
\end{equation*}
The grading $V = V(-2)\oplus V(-1) \oplus V(0) \oplus V(1) \oplus
V(2)$ is specified as follows:
\begin{align*}
{V(2)} = KX^2v_1,\quad & {V(1)}=KXu_1, \quad  {V(0)} = KX v_1,  \\
{V(-1)} =K u_1, \quad & {V(-2)} = Kv_1, \quad V(l) = 0 \text{ for
} |l| > 2.
\end{align*}

Any element $Z \in \gl(V)_X$ is of the matrix form
\[
Z=\left[
 \begin{array}{ccc|cc}
x_0 & x_2 & x_4 & y_1 & y_3 \\
0_{-2}      & x_0 & x_2 & 0_{-1}      & y_1 \\
0_{-4}      & 0_{-2}      & x_0 & 0_{-3}      & 0_{-1}      \\
\hline
0_{-1}      & z_1 & z_3 & w_0 & w_2 \\
0_{-3}      & 0_{-1}      & z_1 & 0_{-2}      & w_0
\end{array}
\right],
\]
where $x_i, y_i, z_i$ and $w_i$ are arbitrary scalars in $K$, $0_i =
0$, and the index $i$ indicates the $\Z$-gradings of the
corresponding matrix entries. Hence, $\sudim \gl(V)_X = 5 |4$.

Note that the centralizer $\gl(V)_X$ is concentrated on the
non-negative degrees.


\end{example}

\subsection{The $\Z$-gradings for $\mathfrak{osp}$}\label{sec:osp-grading}

Let $\phi$ be a nondegenerate even supersymmetric bilinear form on
$V = V_{\bar{0}} \oplus V_{\bar{1}}$, that is, $V_{\bar{0}}$ and
$V_{\bar{1}}$ are orthogonal, the restriction $\phi_{\bar{0}}$ of
$\phi$ to $V_{\bar{0}}$ is symmetric, and the restriction
$\phi_{\bar{1}}$ of $\phi$ to $V_{\bar{1}}$ is skew-symmetric. Then
the ortho-symplectic Lie superalgebra $\g =\osp (V)$ with $\g =\ev\g
+\od \g$  is defined by
\begin{equation}\label{osp1}
\g_i = \{f \in \gl(V)_i | \; \phi(f(x), y) = -(-1)^{i|x|}\phi(x,
f(y)) \; \forall \; x,y \in V\}, \quad i \in \Z_2.
\end{equation}
According to \cite{Kac}, the ortho-symplectic Lie superalgebras
$\osp (V)$ are further divided into infinite series of type $B(m,
n), C(n)$, and $D(m, n)$, depending on whether or not $\sudim V$
is $(2m + 1)| 2n, 2 | 2n,$ and $2m | 2n$, respectively.

Let $\chi \in \ev \g^*$ be a nilpotent $p$-character, which can be
regarded as $\chi \in \g^*$ by declaring that $\chi (\od \g) =0$.
Let $X \in \g_{\bar{0}}$ be the nilpotent element corresponding to
$\chi$ via the form $(\cdot,\cdot)$ on $\g$, and write it as $X =
X_0 + X_1$ where $X_i \in\gl(V_i) \cap \ev \g$. The $X_i$
determines the partition $\pi_i = (\lambda^i_1, \ldots,
\lambda^i_{r_i})$, and $\pi_i$ is the shape of the Jordan
canonical form of $X_i$.
According to \cite[Chap~1]{Jan2},
there exist bases $v_1, \ldots, v_{r_0} \in V_{\bar 0}$ and $u_1,
\ldots, u_{r_1} \in V_{\bar 1}$ satisfying properties in
\cite[1.11, Theorems~1 and 2]{Jan2} respectively.

We retain the $\Z$-grading on $V$ as defined in
Section~\ref{sec:gl-grading} in the case of $\gl(V)$ so that
(\ref{eq:grading-gl-0}) and (\ref{eq:grading-gl-1}) still hold.
Using the arguments in \cite[3.4]{Jan2}, one can show that the
grading on $V$ is compatible with the bilinear form $\phi$ in the
following sense:
\begin{equation}\label{eq:grading-osp-1}
\phi(V(k), V(l)) = 0 \text{ unless } k+l=0.
\end{equation}

We claim that
\begin{equation}\label{eq:grading-osp-2}
\g = \bigoplus_{k \in \Z} \g(k) \quad \text{where } \g(k):= \g
\cap \gl(V)(k).
\end{equation}
Indeed, let us take $Y \in \od \g$ (the argument for even elements
is similar and thus skipped) and decompose $Y = \sum_k Y_k$ with
$Y_k \in \od{(\gl(V)(k))}$. We have to show that $Y_k \in \g$,
i.e. $\phi(Y_k(v), w)= -(-1)^{|v|} \phi(v, Y_k(w))$ for $v \in
V(s)$ and $w \in V(t)$. Note that $Y_k(v) \in V(k+s)$ and $Y_k(w)
\in V(t + k)$. Now (\ref{eq:grading-osp-1}) implies
\[
\phi(Y_k(v), w) = 0 =  \phi(v, Y_k(w)) \quad \text{if }s + k + t
\neq 0.
\]
On the other hand, if $s+k+t=0$, (\ref{eq:grading-osp-1}) implies
\begin{align*}
\phi(Y_k(v),w) &= \sum_{l} \phi(Y_l(v),w) = \phi(Y(v), w) =
-(-1)^{|v|} \phi(v, Y(w))\\
&=-(-1)^{|v|} \sum_l \phi(v, Y_l(w)) = -(-1)^{|v|} \phi(v,
Y_k(w)).
\end{align*}
This proves (\ref{eq:grading-osp-2}).

By definition, $X \in \g(2)$, i.e. (\ref{grading1}) holds. This
implies, as in Section~\ref{sec:gl-grading}, that
\[
\g_X = \bigoplus_{k \in \Z} \g_X (k)
\]
where $\g_X (k) = \g_X  \cap \g(k)$. This verifies
(\ref{grading3}).

\begin{remark}
Define a homomorphism of algebraic groups $\tau = \tau_0 \times
\tau_1$ from the multiplicative group $K^{\times}$ to $GL(V)_{\bar
0} = GL(V_{\bar 0}) \times GL(V_{\bar 1})$ as follows. For each $t
\in K^\times$, let $\tau(t) = \tau_0 (t) \times \tau_1 (t)$ be the
linear map with $\tau_i(t)(v) = t^k v$ for all $v \in V(k)_i$ and
all k. Now the equality (\ref{eq:grading-gl-0}) shows the $\tau$
takes values in $SL(\ev V) \times SL(\od V)$. In the setup of
Section~\ref{sec:osp-grading}, one checks easily using
(\ref{eq:grading-osp-1}) that $\tau(K^{\times})$ is contained in
$SO(\ev V) \times Sp(\od V)$. Now the $\Z$-gradings defined in
Sections~\ref{sec:gl-grading} and \ref{sec:osp-grading} can be
described via
\begin{align*}
\gl(V)(k) &= \{ Z \in \gl(V) \mid \text{Ad}(\tau(t))(Z) = t^k
Z \text{ for all } t \in K^{\times}\}, \\
\osp (V)(k)  &= \{ Z \in \osp(V) \mid \text{Ad}(\tau(t))(Z) = t^k
Z \text{ for all } t \in K^{\times}\}.
\end{align*}
\end{remark}

\subsection{Completing the proof of Theorem~\ref{th:grading} for
$\gl$, $\mathfrak{sl}$ and $\osp$} \label{sec:gl-osp-grading-end}

Throughout this Subsection, it is assumed that $\g =\gl(V)$ or
$\osp (V)$ as in Subsection \ref{sec:gl-grading} or
\ref{sec:osp-grading}.

The invariance of the even nondegenerate bilinear form
$(\cdot,\cdot)$ under all $\tau(t)$ with $t \in K^\times$ implies
that $(\g(k), \g(l)) = 0$ if $ k+ l \neq 0$, while $(\cdot,\cdot)$
induces a perfect pairing between $\g(k)_i$ and $\g(-k)_i$ for $i
\in \Z_2$. Thus (\ref{grading2}) has been verified.

We now prove (\ref{grading4}). Let $0\neq Z \in \g_X (s)_{\bar 0}$
(the argument for $Z$ odd is similar) and write $Z(v_i)$ and
$Z(u_j)$ as in (\ref{eq:cen-gl-ev1}) and (\ref{eq:cen-gl-ev2})
respectively. We have $Z(v_i) \in V(s+1-\lambda^0_i)_{\bar 0}$ and
$Z(u_j) \in
V(s+1-\lambda^1_j)_{\bar 1}$. 
Therefore the coefficients $a_{k,l;i}$ in (\ref{eq:cen-gl-ev1})
and $b_{k,l;j}$ in (\ref{eq:cen-gl-ev2}) are $0$ unless $s + 1 -
\lambda^0_i = 2k + 1 - \lambda^0_l$ and $s + 1 - \lambda^1_j = 2k
+ 1 - \lambda^1_l$ respectively. On the other hand, we have $k
\geq \max(0, \lambda^0_l-\lambda^0_i)$ and $k \geq \max(0,
\lambda^1_l - \lambda^1_j)$ by (\ref{eq:cen-gl-ev1}) and
(\ref{eq:cen-gl-ev2}). So $a_{k,l;i} \neq 0$ (respectively,
$b_{k,l;j} \neq 0$) implies
\[
s = 2k + \lambda^0_i - \lambda^0_l \geq k \geq 0 \quad
(\text{respectively, } s = 2k + \lambda^1_j - \lambda^1_l \geq k
\geq 0).
\]
In either case, we conclude that $s \ge 0$, whence
(\ref{grading4}).

To complete the proof of Theorem~\ref{th:grading} for $\g$, it
remains to verify the identity (\ref{grading5}). Using the
argument of \cite[Lemma~5.7]{Jan2}, one can show that
\begin{equation}\label{eq:grading-general-1}
[\g(k-2)_i, X]=\g(k)_i \Longleftrightarrow \g_X  \cap \g(-k) =0
\end{equation}
for each $k \in \Z$ and $i \in \Z_2$. Now by (\ref{grading4}) and
(\ref{eq:grading-general-1}), the map $\ad X: \g(k)_i \ra
\g(k+2)_i$ is surjective with kernel the subspace $\g(k)_i \cap
\g_{X,i} $ for $k \geq 0$ and $i \in \Z_2$. The highest degree
component in the grading is contained in $\g_X$; counting the
dimension of $\g_X$ backwards through the grading, we prove
\eqref{grading5}.

This completes the proof of Theorem~\ref{th:grading} for $\g
=\gl(V)$ or $\osp (V)$.

\begin{remark}
For $\g =\mathfrak{sl}(m|n)$ with $p \nmid m-n$ and $p>2$, the
arguments in Subsections \ref{sec:gl-grading},
\ref{sec:osp-grading} and the above carry over readily, and so
Theorem~\ref{th:grading} holds for $\mathfrak{sl}(m|n)$ as well.
\end{remark}

\subsection{The $\Z$-gradings for exceptional Lie superalgebras}

Now suppose that $\g$ is one of excetional Lie superalgebras of
type $D(2, 1; \alpha), F(4),$ or $G(3)$ as in the Table of
Section~\ref{sec:bcLsa-p}, which admit a nondegenerate even
supersymmetric bilinear form and whose even subalgebras are Lie
algebras of reductive algebraic groups. In each case, the lower
bound for the characteristic of $K$ is indeed $\geq 3(h-1)$, where
$h$ is the maximum of the Coxeter numbers for the irreducible
summands of the even subalgebra $\ev \g$. As before let $X\in \g$
denote the nilpotent element corresponding to a nilpotent
$p$-character $\chi \in \ev \g^*$. With this restriction on $p$,
by Carter \cite{Ca} (also cf. \cite[Sect.~5.5]{Jan2}), there
exists an $\mathfrak{sl}(2)$-triple $\{X, H, Y\}$ so that $KH$
lifts to a one-dimensional torus $ \gamma : K^{\times} \ra \ev G $
in the simply connected algebraic group associated to $\ev\g$
which induces a $\Z$-grading
\begin{equation*}\label{FGD1}
\g = \oplus_{k\in \Z} \g(k), \quad \text{where }  \g(k)= \{z \in
\g \mid \text{Ad}(\gamma(t))(z) = t^k z \text{ for all } t \in
K^{\times} \}.
\end{equation*}
Moreover, $\ev\g$ is semisimple under the adjoint
$\mathfrak{sl}(2)$-action. By the explicit information on $\od\g$
given in Subsection~\ref{sec:bcLsa-p}, we further observe that
$\od\g$ is also semisimple under the adjoint
$\mathfrak{sl}(2)$-action. Now it follows that this grading
satisfies all the properties (\ref{grading1})--(\ref{grading5}) in
Theorem~\ref{th:grading} just as for semisimple Lie algebras in
characteristic zero (compare with \cite{Jan2}).

\section{Proof of Super KW Conjecture with nilpotent $p$-characters}
\label{sec:proof}

\subsection{The subalgebra $\mathfrak m$}  \label{sec:subalgm}

Let $\g$ be one of the basic classical Lie superalgebras in
Sect.~\ref{sec:bcLsa-p} with a non-degenerate supersymmetric even
bilinear form denoted by $(\cdot, \cdot)$. Furthermore, we have a
$\Z$-grading $\g = \oplus_{k \in \Z}\g(k)$ with favorable
properties as in Theorem~\ref{th:grading}. It follows that for
each $k$ there is a non-degenerate pairing between $\g(k)$ and
$\g(-k)$ by (\ref{grading2}) and that $\sudim \g(k) = \sudim
\g(-k)$. On $\g(-1)_{\bar 0}$ (resp. $\g(-1)_{\bar 1}$) there is a
symplectic (resp. symmetric) bilinear form $\langle \cdot,
\cdot\rangle$ given by
\begin{equation}\label{ssform}
\langle x, y \rangle := (X, [x, y]) = \chi([x, y]).
\end{equation}
This form is even non-degenerate and skew-supersymmetric. Indeed,
take a nonzero $x \in \g(-1)_i$ for $i \in \Z_2$. Then it follows
by (\ref{grading4}) that $0 \neq [X, x] \in \g(1)_i$. By the
non-degeneracy of the pairing between $\g(1)_i$ and $\g(-1)_i$,
there exists $y \in \g(-1)_i$ with $0 \neq ([X,x], y) = (X, [x,
y]) = \langle x, y \rangle$.

Note that $\dim \g(-1)_{\bar 0}$ is even. Take $\g(-1)'_{\bar 0}
\subset \g(-1)_{\bar 0}$ to be a maximal isotropic subspace with
respect to $\langle\cdot, \cdot\rangle$. It satisfies $\dim
\g(-1)'_{\bar 0} = \dim \g(-1)_{\bar 0}/2$.

Denote $r = \dim \g(-1)_{\bar 1}$. There is a basis $v_1,\ldots,
v_r$ of $\g(-1)_{\bar 1}$ under which the symmetric form
$\langle\cdot, \cdot\rangle$ has matrix form
\begin{equation*}
\left[
\begin{array}{ccc}
 & & 1\\
 & \Ddots &\\
 1 & &
\end{array}
\right].
\end{equation*}
If $r$ is even, take $\g(-1)'_{\bar 1} \subset \g(-1)_{\bar 1}$ to
be the subspace spanned by $v_1, \ldots, v_{\frac{r}{2}}$. If $r$
is odd, take $\g(-1)'_{\bar 1} \subset \g(-1)_{\bar 1}$ to be the
subspace spanned by $v_1, \ldots, v_{\frac{r-1}{2}}$. Set
$\g(-1)'=\g(-1)'_{\bar 0} \oplus \g(-1)'_{\bar 1}$ and introduce
the subalgebras
\begin{align*}
\m &= \bigoplus_{k \geq 2} \g(-k) \oplus \g(-1)',\\
\m' &=\begin{cases} \m \oplus Kv_{\frac{r+1}{2}}, & \text{for $r$
odd}\\
\m, & \text{for $r$ even}.
\end{cases}
\end{align*}

The subalgebra $\m$ is $p$-nilpotent, and the linear function
$\chi$ vanishes on the $p$-closure of $[\m, \m]$. It follows by
Proposition~\ref{!simple} that $U_{\chi}(\m)$ has the trivial
module as its only irreducible module and $U_{\chi}(\m)/N_{\m} =
K$ where $N_{\m}$ is the Jacobson radical of $U_{\chi}(\m)$.

Assume that $r$ is odd. The induced $U_{\chi}(\m')$-module $V =
U_{\chi}(\m') \otimes_{U_{\chi}(\m)} K$ is two-dimensional,
irreducible, and admits an odd automorphism of order $2$ induced
from  $v_{\frac{r+1}{2}}$. By Frobenius reciprocity, it is the
only irreducible $U_{\chi}(\m')$-module. Denote by
$\mathfrak{q}_d(K)$ the simple superalgebra of type $Q$ consisting
of all $2d \times 2d$ matrices of the form
\begin{equation*}
\left[
\begin{array}{rr}
A & B\\
-B & A
\end{array}
\right]
\end{equation*}
with $A$ and $B$ arbitrary $d \times d$ matrices. We summarize the
above as follows.

\begin{proposition} \label{prop:q1}
Assume that $\dim \g(-1)_{\bar 1}$ is odd. Then $U_{\chi}(\m')$
has a unique simple module; it is isomorphic to $V$, which is
$2$-dimensional and of type $Q$. Moreover,  $U_{\chi}(\m')/
N_{\m'}$ is isomorphic to the simple superalgebra
$\mathfrak{q}_1(K)$.
\end{proposition}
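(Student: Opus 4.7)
The plan is to analyze the induced module $V$ explicitly and then invoke Frobenius reciprocity. Write $v$ for the odd element $v_{(r+1)/2}$. By the PBW theorem (Proposition~\ref{prop:pbw-u}) applied to $\m' = \m\oplus Kv$, $U_\chi(\m')$ is free of rank $2$ as a right $U_\chi(\m)$-module with basis $\{1,v\}$, so $V$ has $K$-basis $e_1 := 1\otimes 1$ (even) and $e_2 := v\otimes 1$ (odd), and hence $\dim V = 2$. The $U_\chi(\m')$-action is determined by
\[
v\cdot e_1 = e_2, \qquad v\cdot e_2 \;=\; v^2\otimes 1 \;=\; \tfrac12[v,v]\otimes 1.
\]
Since $v$ was chosen as the middle basis vector of the symmetric form $\langle\cdot,\cdot\rangle$ on $\g(-1)_{\bar 1}$, one has $\chi([v,v]) = \langle v,v\rangle = 1$; a short induction using the $p$-twisted central character relation $x^p - x^{[p]} = \chi(x)^p$ in $U_\chi(\m)$ together with the $p$-nilpotency of $\m$ shows that $[v,v]$ acts on the unique $1$-dimensional simple $U_\chi(\m)$-module of Proposition~\ref{!simple} as the nonzero scalar $1$, so $v\cdot e_2 = \tfrac12 e_1$. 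Consequently neither graded line $Ke_1$ nor $Ke_2$ is stable under $\m'$, and $V$ is irreducible in the super sense.

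To establish that $V$ is of type $Q$, I would exhibit a nonzero odd endomorphism $\phi: V\to V$ defined by $\phi(e_1) = e_2$ and $\phi(e_2) = -\tfrac12 e_1$. The super-linearity of $\phi$ with respect to $v$ is immediate from the formulas above, and with respect to $x\in\m$ it reduces to verifying $\chi([x,v]) = 0$ for every such $x$. I would break this into three cases matching the decomposition $\m = \bigoplus_{k\ge 2}\g(-k)\oplus\g(-1)'$: for $x\in\g(-k)$ with $k\ge 2$ the bracket $[x,v]$ lies in $\g(-k-1)$ and is orthogonal to $X\in\g(2)$ by (\ref{grading2}); for $x\in\g(-1)'_{\bar 0}$ the bracket $[x,v]$ is odd while $X$ is even, so the even invariant form forces $(X,[x,v]) = 0$; and for $x\in\g(-1)'_{\bar 1}$ one has $\chi([x,v]) = \langle x,v\rangle = 0$ because $v$ is orthogonal to the maximal isotropic subspace $\g(-1)'_{\bar 1}$ under the symmetric form. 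Hence $\phi$ is a nonzero odd $U_\chi(\m')$-endomorphism of $V$, and $V$ is of type $Q$.

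For uniqueness, let $W$ be any simple $U_\chi(\m')$-module. Its restriction $W|_{U_\chi(\m)}$ has a simple submodule, necessarily isomorphic to the unique $1$-dimensional $U_\chi(\m)$-module $K$ by Proposition~\ref{!simple}. Frobenius reciprocity then produces a nonzero $U_\chi(\m')$-homomorphism $V\to W$, which must be an isomorphism since both sides are simple. Thus $V$ is the unique simple $U_\chi(\m')$-module; the semisimple quotient $U_\chi(\m')/N_{\m'}$ acts faithfully on $V$ as a simple superalgebra whose unique simple module has dimension $2$ and type $Q$, and by Wall's classification this forces $U_\chi(\m')/N_{\m'}\cong \mathfrak{q}_1(K)$. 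The main technical obstacle I anticipate is pinning down the scalar action $[v,v]\mapsto 1$ on the $1$-dimensional $U_\chi(\m)$-module — the $p$-twisted relations must be iterated carefully using the $p$-nilpotency of $\m$ — together with the three-case verification of $\chi([x,v]) = 0$; both rest on the favorable grading properties (\ref{grading2})--(\ref{grading4}) of Theorem~\ref{th:grading}.
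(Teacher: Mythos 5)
Your proof is correct and follows essentially the same line of reasoning as the paper's, which is a brief paragraph preceding the proposition: one induces $V$ from the trivial $\m$-module, observes that it is $2$-dimensional and irreducible with an odd endomorphism coming from $v_{(r+1)/2}$, and invokes Frobenius reciprocity for uniqueness. Your write-up fills in details the paper elides, and they check out: in particular, the claim that $[v_{(r+1)/2},v_{(r+1)/2}]$ acts as the scalar $1$ on the $1$-dimensional $U_\chi(\m)$-module needs, besides the $p$-nilpotency of $\m$, the fact that $\chi$ vanishes on $\g(-l)$ for $l>2$ (a consequence of (\ref{grading2})), so that the iterated relation $\eta(y)^{p^r}=\sum_{j=0}^{r-1}\chi(y^{[p]^j})^{p^{r-j}}$ collapses to $\chi(y)^{p^r}$ — the same observation also justifies $\eta([x,v])=0$ for $x\in\m_{\bar1}$ in your verification that $\phi$ is a homomorphism.
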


We have the following commutative diagram:
\[
\begin{array}{ccc}
\rea{\chi}{\m} & \subset & \rea{\chi}{\m'} \\
\cup &  & \cup\\
N_{\m} &= & N_{\m'}
\end{array}
\]

\subsection{Freeness over $U_{\chi}(\m)$}

For a $U_{\chi}(\g)$-module $M$ put
\begin{equation}\label{eq:ann1}
M^{\m} = \{ v \in M | N_{\m} \cdot v = 0 \} =\{ v \in M | N_{\m'}
\cdot v = 0 \} .
\end{equation}

\begin{proposition}\label{prop:freeness}
Let $\g$ be one of the basic classical Lie superalgebras. Then
every $U_{\chi}(\g)$-module is $U_{\chi}(\m)$-free.
\end{proposition}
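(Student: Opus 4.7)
The plan is to follow the elementary method of Skryabin, combined with a Kazhdan-type filtration argument. The first observation is that $U_\chi(\m)$ is a local Frobenius superalgebra: it is local because $\m$ is $p$-nilpotent with $\chi$ vanishing on the $p$-closure of $[\m,\m]$, so Proposition~\ref{!simple} gives a unique simple module, while the Frobenius property follows from Proposition~\ref{prop:Fro} applied to the restricted Lie superalgebra $\m$. Hence the top $N/N_\m N$ and the socle of any finite-dimensional $U_\chi(\m)$-module $N$ have equal dimension, so $N$ is free over $U_\chi(\m)$ if and only if $\dim N = \dim U_\chi(\m) \cdot \dim N^\m$, where $N^\m$ is the annihilator of $N_\m$ (equivalently the socle). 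The proposition thus reduces to establishing this numerical identity for any $U_\chi(\g)$-module $M$.

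Next I would introduce the Kazhdan filtration on $U(\g)$, defined by assigning degree $k+2$ to elements of $\g(k)$. Since a bracket $[\g(k),\g(l)] \subseteq \g(k+l)$ has Kazhdan degree $k+l+2$, strictly less than the degree $k+l+4$ of the corresponding product $\g(k)\cdot\g(l)$, passing to $U_\chi(\g)$ yields a filtered superalgebra whose associated graded $\mathrm{gr}\,U_\chi(\g)$ is supercommutative. Similarly, since $\pth{x}$ has strictly lower Kazhdan degree than $x^p$ for every homogeneous $x$, the graded algebra $\mathrm{gr}\,U_\chi(\m)$ is a truncated polynomial-exterior superalgebra on $\m$, embedded as a supercommutative subalgebra of $\mathrm{gr}\,U_\chi(\g)$. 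The super PBW theorem (Proposition~\ref{prop:pbw-u}) together with the vector space decomposition $\g = \m \oplus \p$ (for a suitable graded complement $\p$) shows that $\mathrm{gr}\,U_\chi(\g)$ is free over $\mathrm{gr}\,U_\chi(\m)$ on both sides.

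Given a $U_\chi(\g)$-module $M$, I would equip it with a good Kazhdan filtration obtained from a minimal set of $U_\chi(\m)$-generators placed in degree zero and propagated via the filtration on $U_\chi(\g)$. Then $\mathrm{gr}\,M$ is a finitely generated graded module over the supercommutative algebra $\mathrm{gr}\,U_\chi(\g)$, and the freeness of $M$ over $U_\chi(\m)$ reduces to the freeness of $\mathrm{gr}\,M$ over $\mathrm{gr}\,U_\chi(\m)$. In the supercommutative graded setting, this freeness is verified by a direct dimension count: the generators of $\mathrm{gr}\,\m$ lying in $\g(-1)'$ act freely on $\mathrm{gr}\,M$ thanks to the nondegenerate pairing (\ref{ssform}) provided by $\chi$, while the generators from $\bigoplus_{k\geq 2}\g(-k)$ contribute their truncated polynomial-exterior factor. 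Lifting back through the filtration then gives the desired freeness of $M$.

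The main obstacle will be the graded freeness in the last step, where one has to exploit the nondegeneracy of $\langle\cdot,\cdot\rangle$ on $\g(-1)$ to show that the maximal isotropic subspaces $\g(-1)'_{\bar 0}$ and $\g(-1)'_{\bar 1}$, together with $\bigoplus_{k\geq 2}\g(-k)$, act freely on $\mathrm{gr}\,M$. In the super setting the extra subtlety is the odd part: when $\dim \g(-1)_{\bar 1}$ is odd, the space $\g(-1)'_{\bar 1}$ is one dimension short of being Lagrangian, and the leftover vector $v_{(r+1)/2}$ (source of the type-$Q$ phenomenon in Proposition~\ref{prop:q1}) must be tracked through the argument. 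Accommodating it via the enlarged subalgebra $\m'$ is precisely what delivers the expected rank $\delta = p^{d_0/2}\,2^{\lceil d_1/2 \rceil}$ and explains, at the categorical level, the additional $2$-power factor in the Super KW Property.
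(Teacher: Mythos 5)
Your proposal departs from the paper, which transplants Skryabin's argument verbatim: the paper builds an explicit isomorphism $M \xrightarrow{\sim} \Hom_K(U_\chi(\m), M^\m)$ by inductively producing, for each monomial $y_{\mathbf a} \in U_\chi(\m)$, a dual vector $v_{\mathbf a} \in M$ under an ordering by weight (ascending) and then total degree (descending), obtaining the triangular system (\ref{eq:filtration5}). You instead propose a Kazhdan filtration and a graded-freeness argument, closer in spirit to Premet's or Gan--Ginzburg's style. Before the main point, one small correction: the claim that the top and socle of \emph{any} finite-dimensional module over a local Frobenius superalgebra have equal dimension is false --- take $A=K[x,y]/(x^2,y^2)$ and $N=A/(xy)$: the top is one-dimensional while the socle is two-dimensional. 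Your dimension criterion for freeness is nevertheless correct, but its proper justification is the injective-hull embedding $N\hookrightarrow U_\chi(\m)^{\dim N^\m}$ (using that $U_\chi(\m)$ is self-injective with one-dimensional socle), not the stated top--socle equality.

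The essential gap is that the freeness of $\mathrm{gr}\,M$ over $\mathrm{gr}\,U_\chi(\m)$ is asserted, not proved. You invoke ``a direct dimension count'' and the nondegenerate pairing (\ref{ssform}), but that pairing lives on $\g(-1)\times\g(-1)$ and by itself gives no information about how $\g(-1)'$ acts on $\mathrm{gr}\,M$: a pairing between generators of $\m$ and other elements of $\g(-1)$ is not a duality between $\m$-generators and vectors in the module. In Skryabin's argument the nondegeneracy is used quite differently --- together with the surjectivity of $\ad X$ in positive degrees, it lets one find elements of positive degree in $\g$ that act as raising operators and, applied to any $v\in M^\m$, produce the solution vectors $v_{\mathbf b}$ degree by degree. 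A Kazhdan-graded route would need a comparably explicit construction of such a dual system, and you yourself flag this step as ``the main obstacle'' without supplying it; so the proposal, as written, is a plausible alternative strategy but not a complete proof.
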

\begin{proof}
Since our proof is a straightforward superalgebra generalization
of the proof of \cite[Theorem~1.3]{Skr}, we only formulate the
main steps below and refer to {\em loc. cit.} for details.

Recall the $\Z$-grading on $\g=\oplus_{k\in \Z}\g(k)$ from
Sect.~\ref{sec:grading}. Define a decreasing filtration $\{ \g^k
\}$ of $\g$ by setting $\g^k = \sum_{l \geq k} \g(-l)$ with
induced $\Z_2$-grading $\g^k = \g^k_{\bar 0} \oplus \g^k_{\bar 1}$
for $k \in \Z$. It clearly satisfies the conditions in
\cite[(b1)--(b6), pp.568]{Skr}.
%
%
%
%


First of all, we have $[\g^2, N_{\m}] \subset N_{\m}$, and so
$M^{\m}$ is stable under $\g^2$ for every $U_{\chi}(\g)$-module
$M$.

Let $y_{0,1}, \ldots, y_{0,r_0}$ and $y_{1,1}, \ldots, y_{1,r_1}$
be the respective bases for $\m_{\bar 0}$ and $\m_{\bar 1}$
compatible with the $\Z$-grading on $\g$. Define $d_{0,s}
> 0$ (resp. $d_{1,s} > 0$) by the condition $y_{0,s} \in
\g^{d_{0,s}}_{\bar 0} \setminus \g^{d_{0,s} + 1}_{\bar 0}$ (resp.
$y_{1,s} \in \g^{d_{1,s}}_{\bar 1} \setminus \g^{d_{1,s} +
1}_{\bar 1}$).
%
Let $\Xi = \{ \mathbf{a}= (a_1, \ldots, a_{r_0}; \epsilon_1,
\ldots, \epsilon_{r_1}) \vert \; 0 \leq a_s < p, \, \epsilon_t \in
\{0, 1\} \}$. For $\mathbf{a} \in \Xi$ put
\begin{gather}
|\mathbf{a}| = \sum a_s + \sum \epsilon _t ,  \notag \\
\text{wt}\, \mathbf{a} = \sum d_{0,s} a_s + \sum d_{1,t} \epsilon_t , \notag \\
y_{\mathbf{a}} = (y_{0,1} - \eta(y_{0,1}))^{a_1}\cdots (y_{0,r_0}
- \eta(y_{0,r_0}))^{a_{r_0}} y_{1,1}^{\epsilon _1} \cdots
y_{1,r_1}^{\epsilon _{r_1}} \in U_{\chi}(\m) , \notag
\end{gather}
where $\eta \in \ev \m^*$ is the linear function defining the
one-dimensional representation on $\m$ with $p$-character
$\chi|_{\m}$.

For each pair $i, j$ denote $\Lambda(i, j) = \{\mathbf{b} \in \Xi
\mid \text{wt} \,\mathbf{b} = i \text{ and } |\mathbf{b}| = j \}$,
and consider any linear ordering $\preceq$ on $\Xi$ subject to the
following condition: $\mathbf{a} \prec \mathbf{b}$ whenever either
$\text{wt}\, \mathbf{a} < \text{wt}\, \mathbf{b}$ or $\text{wt}\,
\mathbf{a} = \text{wt}\, \mathbf{b}$, $|\mathbf{a}| >
|\mathbf{b}|$. Let $M$ be a $\rea{\chi}{\g}$-module. Following
\cite[proof of Theorem~1.3]{Skr}, for each $v \in M^{\m}$ and
$\mathbf{b} \in \Lambda(i, j)$, there exists $v_{\mathbf{b}} \in
M$ such that
\begin{equation}\label{eq:filtration5}
y_{\mathbf{a}} v_{\mathbf{a}} = v,  \text{ and } y_{\mathbf{a}}
v_{\mathbf{b}} = 0 \text{ when } \mathbf{a} \succ \mathbf{b}.
\end{equation}

Let $\m$ operate in $E = \Hom_K (U_{\chi}(\m), M^{\m})$ by
$(yf)(u) = (-1)^{|y|(|f| + |u|)}f(uy)$ for homogeneous $y \in \m$
, $f \in E$, and $u \in U_{\chi}(\m)$. Take any even linear map
$\pi : M \rightarrow M^{\m}$ such that $\pi |_{M^{\m}} =
\text{id}$ and define $\varphi : M \rightarrow E$ setting
$\varphi(w)(u) = (-1)^{|u||w|}\pi(uw)$ for $w \in M$ and $u \in
U_{\chi}(\m)$. Clearly $\varphi$ is a homomorphism of
$U_{\chi}(\m)$-modules. By (\ref{eq:filtration5})
$\varphi(v_{\mathbf{a}})(y_{\mathbf{a}}) = v$ and
$\varphi(v_{\mathbf{b}})(y_{\mathbf{a}}) = 0$ when $\mathbf{a}
\succ \mathbf{b}$ for any $v \in M^{\m}$. Given $\psi \in E$, it
can be shown by downward induction on $\mathbf{b}$ that there
exists $w \in M$ such that $\varphi(w)(y_{\mathbf{a}}) =
\psi(y_{\mathbf{a}})$ for all $\mathbf{a} \succeq \mathbf{b}$.
Since the elements $y_{\mathbf{a}}$ form a basis for
$U_{\chi}(\m)$, the map $\varphi$ is surjective. Furthermore,
$\text{ker}(\varphi)$ is an $\m$-submodule of $M$ which has zero
intersection with $M^{\m}$ because $\varphi(v)(1) = v$ for every
$v \in M^{\m}$. Then $\text{ker} \varphi = 0$. Thus $\varphi$ is
an isomorphism of $U_{\chi}(\m)$-modules. Since $U_{\chi}(\m)$ is
a Frobenius algebra by Proposition~\ref{prop:Fro}, the
$U_{\chi}(\m)$-module $E \cong U_{\chi}(\m)^* \otimes M^{\m}$ is
free. Hence $M$ is $U_{\chi}(\m)$-free.
\end{proof}

\subsection{The Super KW Property with nilpotent $p$-characters}

\begin{theorem}\label{th:KW-N}
Let $\g$ be one of the basic classcial Lie superalgebras as in
Section~\ref{sec:bcLsa-p}, and let $\chi \in \ev \g^*$ be
nilpotent. Let $d_i = \dim \g_i - \dim \g_{\chi,i},\; i \in \Z_2$.
Then the dimension of every $U_{\chi}(\g)$-module $M$ is divisible
by $p^{\frac{d_0}{2}} 2^{\lfloor \frac{d_1}{2}\rfloor}$.
\end{theorem}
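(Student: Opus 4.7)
\medskip

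\noindent\textbf{Proof plan.} The plan is to combine the freeness of every $U_\chi(\g)$-module over $U_\chi(\m)$ provided by Proposition~\ref{prop:freeness} with a dimension count of $\m$ in terms of $d_0,d_1$, and, when $d_1$ is odd, to supply the missing factor of $2$ via the type $Q$ structure on the annihilator space $M^\m$ coming from Proposition~\ref{prop:q1}.

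The first step is to show that $\dim U_\chi(\m)=p^{d_0/2}2^{\lceil d_1/2\rceil}$. Since the bilinear form $(\cdot,\cdot)$ is even and nondegenerate, property \eqref{grading2} yields a nondegenerate pairing between $\g(k)_i$ and $\g(-k)_i$ for each $k\in\Z$ and $i\in\Z_2$, so $\dim\g(k)_i=\dim\g(-k)_i$. Combined with the bi-graded form $\dim\g_{\chi,i}=\dim\g(0)_i+\dim\g(1)_i$ of \eqref{grading5}, this gives
\[
d_i=\dim\g(-1)_i+2\dim\Bigl(\bigoplus_{k\ge 2}\g(-k)_i\Bigr),\qquad i\in\Z_2.
\]
By the construction in Subsection~\ref{sec:subalgm}, $\dim\g(-1)'_{\bar 0}=\tfrac12\dim\g(-1)_{\bar 0}$, while $\dim\g(-1)'_{\bar 1}$ equals $\tfrac12\dim\g(-1)_{\bar 1}$ if that integer is even and $\tfrac12(\dim\g(-1)_{\bar 1}-1)$ if it is odd. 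Therefore $\dim\m_{\bar 0}=d_0/2$ and $\dim\m_{\bar 1}=\lceil d_1/2\rceil$, and Proposition~\ref{prop:pbw-u} yields the claimed dimension of $U_\chi(\m)$.

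Next I would invoke Proposition~\ref{prop:freeness}; in fact its proof produces an explicit isomorphism $\varphi:M\xrightarrow{\sim}\Hom_K(U_\chi(\m),M^\m)$ of $U_\chi(\m)$-modules, so
\[
\dim M=\dim U_\chi(\m)\cdot\dim M^\m=p^{d_0/2}\,2^{\lceil d_1/2\rceil}\cdot\dim M^\m.
\]
When $d_1$ is even, $\lceil d_1/2\rceil=\lfloor d_1/2\rfloor$ and the desired divisibility is immediate. When $d_1$ is odd, $\lfloor d_1/2\rfloor=\lceil d_1/2\rceil+1$, and I must extract one extra factor of $2$ from $\dim M^\m$: by \eqref{eq:ann1}, $M^\m$ is annihilated by $N_\m=N_{\m'}$, hence is naturally a module over $U_\chi(\m')/N_{\m'}\cong\mathfrak{q}_1(K)$ by Proposition~\ref{prop:q1}. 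Since the simple superalgebra $\mathfrak{q}_1(K)$ of type $Q$ has a unique irreducible module of total dimension $2$, every $\mathfrak{q}_1(K)$-module has even total dimension, whence $2\mid\dim M^\m$ and the claim follows.

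The only genuinely super feature is this last step: freeness over $U_\chi(\m)$ alone is off by a factor of $2$ precisely when $d_1$ is odd, and this shortfall is exactly compensated by the type $Q$ structure carried by $M^\m$. Everything else is a bookkeeping consequence of Theorem~\ref{th:grading} together with Propositions~\ref{prop:freeness} and \ref{prop:q1}.
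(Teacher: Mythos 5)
Your proof is correct and follows essentially the same route as the paper: compute $\sudim\m$ from \eqref{grading2} and \eqref{grading5}, apply Proposition~\ref{prop:freeness} to get $\dim M = \dim U_\chi(\m)\cdot\dim M^\m$, and when $d_1$ is odd invoke the type-$Q$ structure of $M^\m$ over $U_\chi(\m')/N_\m\cong\mathfrak{q}_1(K)$ from Proposition~\ref{prop:q1} to extract the extra factor of~$2$. The only cosmetic difference is that you split the dimension count $d_i = \dim\g(-1)_i + 2\sum_{k\ge 2}\dim\g(-k)_i$ into the two parities explicitly, whereas the paper records it once as the $\sudim$-identity \eqref{m-dim}; the substance is identical.
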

Note that $p^{\frac{d_0}{2}} 2^{\lfloor \frac{d_1}{2}\rfloor}
=\dim \rea{\chi}{\m'}$ ($\neq \dim \rea{\chi}{\m}$ in general).

\begin{proof}
By (\ref{grading5}) and since $\sudim \g(k) = \sudim \g(-k)$, we
have
\begin{equation}\label{m-dim}
\sudim \g - \sudim \g_{\chi} = \sum_{k \geq 2} 2 \sudim \g(-k) +
\sudim \g(-1).
\end{equation}
In particular, $\dim \g(-1)_{\bar 1}$ and $d_1$ have the same
parity. It follows now from the definition of $\m$ that either (1)
$\frac{d_0}{2} | \frac{d_1}{2} = \sudim \m$ when $\dim
\g(-1)_{\bar 1}$ and $d_1$ are even, or (2) $\frac{d_0}{2} |
\frac{d_1 - 1}{2} = \sudim \m$ when $\dim \g(-1)_{\bar 1}$ and
$d_1$ are odd.

In case (1), the theorem follows immediately from
Proposition~\ref{prop:freeness}.

In case (2), for each $U_{\chi}(\g)$-module $M$, $M^{\m}$ is a
module over the superalgebra $U_{\chi}(\m')/ N_{\m} \cong
\mathfrak{q}_1(K)$, by Proposition~\ref{prop:q1}. Since the
(unique) simple module of $\mathfrak{q}_1(K)$ is two-dimensional,
$M^{\m}$ has dimension divisible by $2$. Now the isomorphism $M
\cong U_{\chi}(\m)^* \otimes M^{\m}$ (cf.
Proposition~\ref{prop:freeness} and its proof) implies the desired
divisibility.
\end{proof}

Theorem~\ref{th:KW-N} can be somewhat strengthened in the
following form.
\begin{theorem} \label{th:finW}
Set $\delta = \dim \rea{\chi}{\m}$ and denote by
$\mathcal{Q}_{\m}$ the induced $\rea{\chi}{\g}$-module
$\rea{\chi}{\g}\otimes_{\rea{\chi}{\m}}K_{\chi}$. Then
$\mathcal{Q}_{\m}$ is a projective $\rea{\chi}{\g}$-module and
\[
\rea{\chi}{\g} \cong M_{\delta} (W_\chi(\g)^{op}), \quad
\text{where }
W_\chi(\g)=\text{End}_{\rea{\chi}{\g}}(\mathcal{Q}_{\m}).
\]
\end{theorem}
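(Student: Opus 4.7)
The plan is to follow Premet's strategy from \cite{Pr2} in three steps, substituting the freeness result of Proposition~\ref{prop:freeness} for his use of support variety machinery: first establish that $\mathcal{Q}_{\m}$ is projective, next show that $\rea{\chi}{\g}$ decomposes as $\mathcal{Q}_{\m}^{\oplus \delta}$ as a left module over itself, and finally extract the matrix algebra structure by computing endomorphism algebras.

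For projectivity, Frobenius reciprocity provides a natural identification $\Hom_{\rea{\chi}{\g}}(\mathcal{Q}_{\m}, M) \cong \Hom_{\rea{\chi}{\m}}(K_{\chi}, M) = M^{\m}$ for every $\rea{\chi}{\g}$-module $M$. To see that $M \mapsto M^{\m}$ is exact on $\rea{\chi}{\g}$-mod, I will take an arbitrary short exact sequence $0 \to A \to B \to C \to 0$ of $\rea{\chi}{\g}$-modules; by Proposition~\ref{prop:freeness} all three terms are $\rea{\chi}{\m}$-free, so the sequence splits upon restriction to $\rea{\chi}{\m}$, whence taking $\m$-invariants is exact. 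Hence $\mathcal{Q}_{\m}$ is a projective $\rea{\chi}{\g}$-module.

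For the decomposition, let $L_1, \ldots, L_s$ denote the simple $\rea{\chi}{\g}$-modules with projective covers $P_1, \ldots, P_s$ and super-endomorphism algebras $D_i = \End_{\rea{\chi}{\g}}(L_i)$. Super Artin--Wedderburn theory identifies $[\rea{\chi}{\g} : P_i] = \dim_{D_i} L_i$ and $[\mathcal{Q}_{\m} : P_i] = \dim_{D_i} \Hom_{\rea{\chi}{\g}}(\mathcal{Q}_{\m}, L_i) = \dim_{D_i} L_i^{\m}$. Proposition~\ref{prop:freeness} makes each simple $L_i$ free over the local Frobenius superalgebra $\rea{\chi}{\m}$ of rank $\dim_K L_i / \delta$; since $\rea{\chi}{\m}$ has one-dimensional socle, each free summand contributes a single dimension of $\m$-invariants, so $\dim_K L_i^{\m} = \dim_K L_i / \delta$, and the same ratio survives after dividing by $\dim_K D_i$. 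Thus $[\rea{\chi}{\g} : P_i] = \delta \cdot [\mathcal{Q}_{\m} : P_i]$ for every $i$, yielding $\rea{\chi}{\g} \cong \mathcal{Q}_{\m}^{\oplus \delta}$ as left $\rea{\chi}{\g}$-modules. Applying $\End_{\rea{\chi}{\g}}(-)$ then gives
$$\rea{\chi}{\g}^{op} \cong \End_{\rea{\chi}{\g}}(\mathcal{Q}_{\m}^{\oplus \delta}) \cong M_{\delta}(W_\chi(\g)),$$
and a matrix transpose anti-isomorphism supplies the final $\rea{\chi}{\g} \cong M_{\delta}(W_\chi(\g)^{op})$. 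The main obstacle is the bookkeeping in this middle step when simple modules of type $Q$ are present: $D_i$ is then a two-dimensional Clifford superalgebra, and one must verify that the $\Z_2$-graded $D_i$-module structure on $L_i^{\m}$ is free so that the $D_i$-dimension count is legitimate. This is the ``implicit type $Q$ phenomenon'' noted in the introduction, and it can be handled by appealing to super-Schur, which guarantees that every right $D_i$-super-module is free.
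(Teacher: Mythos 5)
Your argument is correct and follows the same broad strategy as the paper's proof (freeness from Proposition~\ref{prop:freeness}, Frobenius reciprocity, super Wedderburn), but the specific route to projectivity is genuinely different and worth noting. The paper does not prove projectivity of $\mathcal{Q}_{\m}$ directly: it constructs by hand a projective module $P = \bigoplus_i P_i^{a_i} \oplus \bigoplus_j Q_j^{b_j}$ with the same head as $\mathcal{Q}_{\m}$, obtains a surjection $P \twoheadrightarrow \mathcal{Q}_{\m}$, and then shows this is an isomorphism by the dimension count $\dim P = \dim\rea{\chi}{\g}/\delta = \dim\mathcal{Q}_{\m}$ (using $\rea{\chi}{\g} \cong P^{\delta}$ from super Wedderburn); projectivity of $\mathcal{Q}_{\m}$ falls out as a byproduct. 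You instead prove projectivity up front by observing that $\Hom_{\rea{\chi}{\g}}(\mathcal{Q}_{\m},-) \cong (-)^{\m}$ is exact, since Proposition~\ref{prop:freeness} forces every short exact sequence of $\rea{\chi}{\g}$-modules to split after restriction to the Frobenius algebra $\rea{\chi}{\m}$; this is cleaner and more conceptual. Your multiplicity comparison is then essentially the paper's argument reorganized: where the paper separates type $M$ simples (ranks $a_i$, $\dim V_i = \delta a_i$) from type $Q$ simples (ranks $2b_j$, $\dim W_j = 2\delta b_j$, introducing the factor $2$ by the odd involution), you absorb the type $Q$ bookkeeping into $\dim_{D_i}$, which is legitimate precisely because $D_i \cong K$ or $\mathfrak{q}_1(K)$ is a simple superalgebra and hence all its supermodules (in particular $L_i^{\m}$) are free — the point you correctly flag. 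Both approaches buy the same theorem; yours isolates projectivity as a self-contained step, which is a small but real improvement in exposition.
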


\begin{proof}
Let $V_1, \ldots, V_s$ (resp. $W_1, \ldots, W_t$) be all
inequivalent simple $\rea{\chi}{\g}$-modules of type $M$ (resp. of
type $Q$). Let $P_i$ (resp. $Q_j$) denote the projective cover of
$V_i$ (resp. $W_j$). By Proposition~\ref{prop:freeness}, $V_i$ and
$W_j$ are free over $\rea{\chi}{\m}$. It follows by Frobenius
reciprocity that
\[
\dim \Hom_{\g}(\mathcal{Q}_{\m}, V_i)= \dim \Hom_{\m}(K_{\chi},
V_i) = :a_i.
\]
By Frobenius reciprocity and Proposition~\ref{!simple},
\[
\dim \Hom_{\g}(\mathcal{Q}_{\m}, W_j)= \dim \Hom_{\m}(K_{\chi},
W_j) = \dim \Hom_{\m}(\rea{\chi}{\m}, W_j)
\]
which has to be an even number, say $2b_j$, since as a type $Q$
module $W_j$ admits an odd involution commuting with $\m$. It
follows that the ranks of the free $\rea{\chi}{\m}$-modules $V_i$
and $W_j$ are $a_i$ and $2b_j$ respectively. Put
\[
P = \bigoplus_{i=1}^{s}  P_i^{a_i} \bigoplus \bigoplus_{j=1}^t
Q_j^{b_j}.
\]
Then $P$ is projective and has the same head as
$\mathcal{Q}_{\m}$. So there is a surjective homomorphism $\psi: P
\ra \mathcal{Q}_{\m}$.

Since $\dim V_i = \delta a_i$ and $\dim W_j = 2\delta b_j$, by
Wedderburn theorem for superalgebras (cf. Kleshchev
\cite[Theorem~12.2.9]{Kle}) the left regular
$\rea{\chi}{\g}$-module is isomorphic to $P^{\delta}$. The
equality of dimensions
\[
\dim P = \dim \rea{\chi}{\g}/{\delta} = \dim \mathcal{Q}_{\m}
\]
implies that $\psi$ is an isomorphism. Finally,
\begin{eqnarray*}
\rea{\chi}{\g} &\cong&
\text{End}_{\rea{\chi}{\g}}(\rea{\chi}{\g})^{op} \cong
\text{End}_{\rea{\chi}{\g}}(P^{\delta})^{op}
\\
&\cong& (M_{\delta} (\text{End}_{\rea{\chi}{\g}}(P)))^{op} \cong
M_{\delta} (W_\chi(\g)^{op}).
\end{eqnarray*}
This completes the proof of the theorem.
\end{proof}

\begin{remark}
In the case $\g$ is a Lie algebra (i.e. $\ev \g =0$), we recover a
theorem of Premet~\cite[Theorem~2.3 (i),(ii)]{Pr2} with somewhat
modified arguments which avoid the use of support variety
machinery.

The algebra $W_\chi(\g)$ has a counterpart over $\C$, which is
usually referred to as a finite $W$-superalgebra in the math
physics literature.
\end{remark}
\begin{remark} \label{rem:semisimple}
Theorems~\ref{th:KW-N} and \ref{th:finW} remain valid when $\g$ is
a direct sum of basic classical Lie superalgebras.
\end{remark}

\section{A reduction from general to nilpotent $p$-characters}\label{sec:gen. char.}

In this section, we will establish a Morita equivalence which
reduces the case of a general $p$-character to a nilpotent one,
completing the proof of the Super KW Conjecture for $\g$.

\subsection{An equivalence of categories} \label{sec:gen. char. ass.}

Let $\g$ be a basic classical Lie superalgebra as in
\ref{sec:bcLsa-p}.

Let $\chi = \chi_s + \chi_n$ be the Jordan decomposition of $\chi
\in \ev \g^*$ (we regard $\chi \in \g^*$ by letting $\chi(\od \g)
=0$). Under the isomorphism $\ev\g^* \cong \ev\g$ induced by the
nondegenerate bilinear form $(\cdot,\cdot)$ on $\ev\g$, this can
be identified with the usual Jordan decomposition $s+n$ on
$\ev\g$. Take a Cartan subalgebra $\h$ of $\g$ which contains $s$,
and recall that $\g$ admits a root space decomposition (cf.
\cite{Kac})
$$\g = \h \bigoplus \bigoplus_{\alpha \in \Phi} \g_\alpha.$$
Then it follows that $\g_s =: \mathfrak{l} =\ev {\mathfrak l}
\oplus \od {\mathfrak l}$ also has a root space decomposition
$$\mathfrak l =\h \bigoplus \bigoplus_{\alpha \in \Phi(\mathfrak{l})} \g_\alpha,$$
where $\Phi(\mathfrak{l}) =\{\alpha\mid \alpha(s) =0\}$.


It is a well-known super phenomenon that all the systems of simple
roots (resp. the systems of positive roots, or resp. Borel
subalgebras) of $\g$ are not equivalent under the Weyl group $W$.
An explicit list of non-$W$-equivalent systems of positive roots
can be found in Kac \cite[pp.51--53]{Kac}.

The following proposition is proved by case-by-case calculations,
which is completely elementary yet tedious and thus omitted. From
the detailed calculation we find that the following system of
simple roots for $F(4)$ up to $W$-equivalence
$$\{-\delta, \hf (\delta -\varepsilon_1 -\varepsilon_2 +\varepsilon_3),
\varepsilon_1 -\varepsilon_2, \varepsilon_2 -\varepsilon_3\}$$ is
missing from Kac's list. (We learned that this was also noticed by
Serganova.)

\begin{proposition}
Let  $\mathfrak{l} = \g_{s}$ with $s$ in a Cartan subalgebra $\h$
of $\g$. There exists a system $\Pi$ of simple roots of $\g$ such
that $\Pi \cap \Phi(\mathfrak{l})$ is a system of simple roots for
$\Phi(\mathfrak{l})$. In particular $\mathfrak l$ is always a
direct sum of basic classical Lie superalgebras.

Let $\mathfrak{b} =\h \oplus \mathfrak n$ be the Borel subalgebra
associated to $\Pi$. Then we can define a parabolic subalgebra $\p
= \mathfrak{l} + \mathfrak{b} = \mathfrak{l} \oplus \mathfrak{u}$,
where $\mathfrak{u}$ denotes the nilradical of $\p$.
\end{proposition}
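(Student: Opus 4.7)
The plan is to construct $\Pi$ directly from $s$ by a linear-algebra argument on the root lattice, bypassing the need to test each of Kac's Borels separately. Using the non-degenerate form $(\cdot,\cdot)$, identify $s$ with an element of $\ev\g$, regard $\Phi \subset \h^*$, and record that $\Phi(\mathfrak l) = \{\alpha \in \Phi \mid \alpha(s)=0\}$, so $\Phi(\mathfrak l)$ is cut out from $\Phi$ by a hyperplane in the $\mathbb{Q}$-span of the roots. Since $\ev{\mathfrak l}$ is the centralizer of a semisimple element in the reductive algebra $\ev\g$, it is reductive, and $\Phi(\mathfrak l)$ inherits the structure of a (possibly reducible) abstract root system.

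First I would fix a positive system $\Phi(\mathfrak l)^+$ of $\Phi(\mathfrak l)$, realized as $\{\alpha \in \Phi(\mathfrak l) \mid \alpha(\nu_0) > 0\}$ for some $\nu_0 \in \h$ regular with respect to $\Phi(\mathfrak l)$, and then perturb by $s$: set $\nu_t := \nu_0 + t\, s$ for $t \in \mathbb Q$ large. For $\alpha \in \Phi(\mathfrak l)$ we have $\alpha(\nu_t) = \alpha(\nu_0) \neq 0$, while for $\alpha \notin \Phi(\mathfrak l)$ the value $\alpha(\nu_t) = \alpha(\nu_0) + t\alpha(s)$ eventually has the sign of $\alpha(s) \neq 0$. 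Thus $\nu_t$ is regular for all of $\Phi$ and $\Phi^+ := \{\alpha \in \Phi \mid \alpha(\nu_t)>0\}$ is a positive system satisfying $\Phi^+ \cap \Phi(\mathfrak l) = \Phi(\mathfrak l)^+$. I then take $\Pi$ to be the simple roots of $\Phi^+$.

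Next I would verify that $\Pi \cap \Phi(\mathfrak l)$ is a simple system for $\Phi(\mathfrak l)^+$. The nontrivial direction is that any $\alpha$ simple in $\Phi(\mathfrak l)^+$ remains simple in $\Phi^+$. If instead $\alpha = \beta+\gamma$ with $\beta,\gamma \in \Phi^+$, then $\beta(s)+\gamma(s)=0$, while positivity in $\Phi^+$ forces each of $\beta,\gamma$ either to lie in $\Phi(\mathfrak l)^+$ or to satisfy $\beta(s)>0$ (resp.\ $\gamma(s)>0$) for $t$ large; the only option compatible with $\beta(s)+\gamma(s)=0$ is $\beta,\gamma \in \Phi(\mathfrak l)^+$, contradicting the simplicity of $\alpha$ in $\Phi(\mathfrak l)^+$. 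Once the simple-root compatibility is established, the connected components of the Dynkin diagram of $\Pi \cap \Phi(\mathfrak l)$ describe direct summands of $\mathfrak l$; each summand inherits from $\g$ a reductive even part and the restriction of the non-degenerate supersymmetric even form $(\cdot,\cdot)$, hence is basic classical by Kac's classification. Finally, with $\mathfrak b = \h \oplus \n$ the Borel attached to $\Pi$, the subspace $\p = \mathfrak l + \mathfrak b = \mathfrak l \oplus \mathfrak u$ with $\mathfrak u = \bigoplus_{\alpha \in \Phi^+ \setminus \Phi(\mathfrak l)} \g_\alpha$ is automatically a parabolic subalgebra containing $\mathfrak b$, since $\Phi^+ \setminus \Phi(\mathfrak l)$ is closed under addition with elements of either $\Phi^+$ or $\Phi(\mathfrak l)$.

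The conceptual argument above is uniform, but the main obstacle — and the source of the \emph{tedious} case-by-case work the authors refer to — is reconciling the $\Pi$ so produced with the published enumeration of inequivalent systems of simple roots. Because such systems are not all $W$-conjugate in the super setting, one has to run the construction for each isomorphism class of Levi $\mathfrak l$ inside each of $\gl(m|n)$, $\mathfrak{sl}(m|n)$, $B(m,n)$, $C(n)$, $D(m,n)$, $D(2,1;\alpha)$, $F(4)$, $G(3)$ and verify that the resulting $\Pi$ lies on an (augmented) Kac list; it is precisely this bookkeeping that uncovers the missing $F(4)$ simple system recorded in the statement.
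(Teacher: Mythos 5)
Your plan is a genuinely different and more conceptual route than the paper's, which (as the authors say) is a case-by-case check run over Kac's list of non-$W$-conjugate Borel subalgebras. If your argument went through, it would be an improvement. However, there is a real gap in the perturbation step.

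You write $\nu_t = \nu_0 + t\,s$ and argue that for $\alpha \notin \Phi(\mathfrak l)$ the value $\alpha(\nu_t) = \alpha(\nu_0) + t\,\alpha(s)$ ``eventually has the sign of $\alpha(s)$.'' This presupposes an ordering on the quantities $\alpha(s)$. But $s$ lives in $\mathfrak h$ over the algebraically closed field $K$ of characteristic $p>2$, so the root values $\alpha(s)$ lie in $K$, where ``sign'' and ``large $t$'' have no meaning. Worse, $\nu_0$ must be chosen in the rational (co)root space for the inequality $\alpha(\nu_0)>0$ to make sense, so $\alpha(\nu_0)$ and $t\,\alpha(s)$ do not even live in a common ordered field, and $\nu_t$ is not a legitimate object. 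The same issue recurs in your verification that $\alpha$ simple in $\Phi(\mathfrak l)^+$ stays simple in $\Phi^+$: the case split ``$\beta(s)=0$ or $\beta(s)>0$'' is again an ordering on $K$.

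What your argument is implicitly using — and what actually requires proof — is that the closed subsystem $\Phi(\mathfrak l)=\{\alpha\in\Phi\mid\alpha(s)=0\}$ is a \emph{Levi} subsystem of $\Phi$, i.e.\ that there is a rational linear functional $\ell$ on the $\mathbb Q$-span of the roots with $\Phi(\mathfrak l)=\{\alpha\mid\ell(\alpha)=0\}$. Over $\mathbb Q$ one could take $\ell=s$ itself; over $K$ of characteristic $p$ this is exactly the assertion that the centralizer of a semisimple element is a Levi subalgebra rather than some other closed subsystem, and it is not automatic. Once such an $\ell$ is in hand, your perturbation $\nu_0 + t\ell$ and the rest of the argument are correct and give the compatible simple system, the direct-sum decomposition of $\mathfrak l$, and the parabolic $\p$. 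But establishing Levi-ness is precisely the content that the paper resolves by inspection for each of $\gl(m|n)$, $\mathfrak{sl}(m|n)$, the ortho-symplectic types, and the three exceptionals under the stated restrictions on $p$; your proposal shifts the burden there rather than eliminating it. (Your concluding paragraph attributes the case-by-case work to reconciling $\Pi$ with Kac's published list, but that reconciliation is not actually needed for the proposition; the necessary case work is the Levi-ness just described.)
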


Note that $\chi(\mathfrak u) =0$ since $s+n \in \g_\chi \subset
\g_{\chi_s} =\mathfrak l$, and $\chi(\mathfrak u) =(s+n, \mathfrak
u) \subset (\mathfrak l, \mathfrak u) =0$. Also note that
$\chi_s|_{\mathfrak l} =0$ and hence $\chi|_{\mathfrak l}
=\chi_n|_{\mathfrak l}$ is nilpotent. Thus any
$\rea{\chi}{\mathfrak l}$-module can be regarded as a
$\rea{\chi}{\p}$-module with a trivial action of $\mathfrak u$.
Here and below, by abuse of notation, we will use the same letter
$\chi$ for its restrictions on $\p$, $\mathfrak l$, or $\mathfrak
u$.

Given an associative $k$-superalgebra $A$, $A$-$\mathfrak{mod}$
denotes the category of finite-dimensional $A$-(super)modules with
{\em even} morphisms, which turns out to be an abelian category.
One can easily switch back and forth by a parity functor between
this category and the category of finite-dimensional
$A$-(super)modules with {\em arbitrary} morphisms (cf. e.g.
\cite{BKN}).

\begin{theorem}\label{th:morita}
Let $\g$ be a basic classical Lie superalgebra as in Section
\ref{sec:bcLsa-p}. Let $\chi =\chi_s +\chi_n\in \ev\g^*$ be a Jordan
decomposition, $\mathfrak{l} =\g_{\chi_s}$, and $\p =\mathfrak{l}
\oplus \mathfrak u$ as above. Then $-^{\mathfrak u}:
\rea{\chi}{\g}$-$\mathfrak{mod} \rightarrow \rea{\chi}{\mathfrak
l}$-$\mathfrak{mod}$ is an equivalence of categories, and its
inverse is given by $\rea{\chi}{\g} \otimes_{\rea{\chi}{\p}} -:
\rea{\chi}{\mathfrak l}$-$\mathfrak{mod} \rightarrow
\rea{\chi}{\mathfrak g}$-$\mathfrak{mod}$. Moreover,
$\rea{\chi}{\g}$ and $\rea{\chi}{\mathfrak l}$ are Morita
equivalent.
\end{theorem}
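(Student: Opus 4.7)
The plan is to adapt the strategy of Friedlander-Parshall~\cite{FP} to the super setting, exhibiting $F := \rea{\chi}{\g}\otimes_{\rea{\chi}{\p}}-$ and $G := -^{\mathfrak u}$ as quasi-inverse equivalences. Let $\mathfrak u^-$ denote the nilradical of the opposite parabolic $\p^- = \mathfrak l \oplus \mathfrak u^-$, so that $\g = \mathfrak u^- \oplus \p$ as vector spaces. Because $\mathfrak u$ and $\mathfrak u^-$ are sums of root spaces $\g_\alpha$ with $\alpha \notin \Phi(\mathfrak l)$, while the even invariant form pairs $\g_\alpha$ with $\g_{-\alpha}$ and satisfies $\mathfrak l \perp \mathfrak u$ and $\mathfrak l \perp \mathfrak u^-$, both $\chi_s$ (vanishing on all root spaces) and $\chi_n = (n,\cdot)$ with $n\in\mathfrak l$ vanish on $\mathfrak u$ and $\mathfrak u^-$. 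Thus $\rea{\chi}{\mathfrak u}$ and $\rea{\chi}{\mathfrak u^-}$ coincide with the corresponding restricted enveloping algebras, each with a unique trivial simple module by Proposition~\ref{!simple}. PBW yields $\rea{\chi}{\g} \cong \rea{\chi}{\mathfrak u^-}\otimes_K \rea{\chi}{\p}$ as right $\rea{\chi}{\p}$-modules, so $F$ is exact. The adjunction $\Hom_{\g}(FV, M) \cong \Hom_{\mathfrak l}(V, GM)$ then follows from tensor-Hom together with the observation that any $\p$-homomorphism from a module with trivial $\mathfrak u$-action automatically lands in $M^{\mathfrak u}$.

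Next I would verify the unit $\eta_V: V \to GFV$, $v \mapsto 1\otimes v$, is an isomorphism. Under the PBW identification $FV \cong \rea{\chi}{\mathfrak u^-}\otimes_K V$, the task reduces to showing that the $\mathfrak u$-invariants of $\rea{\chi}{\mathfrak u^-}$, under the action induced by commuting $\mathfrak u$ past $\mathfrak u^-$ modulo $\rea{\chi}{\p}\cdot\mathfrak u$, collapse to the scalars $K$. The crucial input is non-degeneracy of the pairing $\kappa(x,y) := \chi([x,y])$ on $\mathfrak u \times \mathfrak u^-$: for each root $\alpha \notin \Phi(\mathfrak l)$ and root vectors $x_\alpha \in \g_\alpha$, $y_{-\alpha}\in \g_{-\alpha}$, the bracket $[x_\alpha, y_{-\alpha}]\in \h$ satisfies $\chi([x_\alpha, y_{-\alpha}]) = \chi_s(h_\alpha)$, which is a nonzero multiple of $\alpha(s)\neq 0$ (the $\chi_n$-contribution vanishes since $n$ is a sum of root vectors, hence orthogonal to $\h$). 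Picking PBW bases of $\mathfrak u$ and $\mathfrak u^-$ dual with respect to $\kappa$ and running a downward induction on a PBW-monomial order on $\rea{\chi}{\mathfrak u^-}$ — mirroring the standard fact that a super Weyl-Clifford algebra acts on its polynomial representation with one-dimensional invariants — eliminates any hypothetical $\mathfrak u$-invariant of positive degree.

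For the counit $\epsilon_M: FGM \to M$, I would establish that every $\rea{\chi}{\g}$-module $M$ is free over $\rea{\chi}{\mathfrak u^-}$ of rank $\dim M^{\mathfrak u}$, via a Skryabin-style argument in the spirit of Proposition~\ref{prop:freeness}: the dual bases from the previous step give an explicit procedure for lifting any basis of $M^{\mathfrak u}$ to a free basis of $M$ over $\rea{\chi}{\mathfrak u^-}$. A dimension count then yields $\dim FGM = \dim\rea{\chi}{\mathfrak u^-}\cdot \dim M^{\mathfrak u} = \dim M$, and since $M^{\mathfrak u}$ visibly lies in the image of $\epsilon_M$, the counit is surjective and hence an isomorphism. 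This completes the categorical equivalence. Morita equivalence follows at once from the standard bimodule interpretation: the $(\rea{\chi}{\g}, \rea{\chi}{\mathfrak l})$-bimodule $P := F(\rea{\chi}{\mathfrak l})$ satisfies $\End_{\rea{\chi}{\g}}(P)^{op} \cong \rea{\chi}{\mathfrak l}$ (via the unit iso) and is a projective generator (via the equivalence).

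The hardest part will be the super-modifications to the central two steps, where $\mathfrak u$ and $\mathfrak u^-$ may contain purely even, isotropic odd, and non-isotropic odd root vectors. Isotropic odd roots contribute Grassmann/Clifford-type factors to $\rea{\chi}{\mathfrak u^-}$ rather than polynomial ones, while non-isotropic odd roots in type $B(m,n)$ bring the relation $2\alpha\in\Phi$ into play; one must verify that the dual-basis construction under $\kappa$ and the downward PBW induction accommodate all three types uniformly. The complication of non-conjugate Borel subalgebras has already been handled by the proposition preceding the theorem, which supplies a compatible choice of simple roots adapted to $\mathfrak l$.
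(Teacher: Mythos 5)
Your approach is genuinely different from the paper's. The paper reduces Theorem~\ref{th:morita} to Theorem~\ref{th:ME-simple} (the irreducible case, following \cite[pp.~1068]{FP}) and proves the latter by iterating one simple root at a time: Lemma~\ref{lem:ME4} uses odd reflections to produce a chain of Borel subalgebras interpolating from $\p$ to $\p^-$, and Lemma~\ref{lem:ME2} disposes of each rank-one step via the semisimplicity of $\rea{\chi}{\mathfrak{sl}(2)}$, $\rea{\chi}{\mathfrak{sl}(1|1)}$, and $\rea{\chi}{\osp(1|2)}$ when $\chi(h)\neq 0$. You instead propose a one-shot argument: exhibit $F$ and $G$ as adjoint functors and show directly that the unit and counit are isomorphisms, with the engine being a non-degenerate pairing $\kappa(x,y)=\chi([x,y])$ on $\mathfrak u\times\mathfrak u^-$ and a Skryabin-style induction. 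This is a reasonable alternative strategy (indeed, it is closer in spirit to how the nilpotent case was handled in Section~\ref{sec:proof}), but as written there are two real gaps.

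First, your non-degeneracy claim is only established for the ``diagonal'' blocks of $\kappa$. For roots $\alpha\in\Phi_{\mathfrak u}$, $\beta\in\Phi^-_{\mathfrak u}$ with $\alpha+\beta\neq 0$, the bracket $[\g_\alpha,\g_\beta]$ lies in $\g_{\alpha+\beta}$; if $\alpha+\beta$ happens to be a root of $\mathfrak l$, then $\chi_n(\g_{\alpha+\beta})=(n,\g_{\alpha+\beta})$ need not vanish, so $\kappa$ has off-diagonal entries coming from $\chi_n$ that you never address. Non-degeneracy can still be saved by an argument that the $\chi_s$ part is the ``leading term'' and the $\chi_n$ correction is strictly triangular with respect to a suitable ordering of roots, but this needs to be said. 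Second, and more seriously, the Skryabin argument you invoke for the counit (and implicitly for the unit) requires more than non-degeneracy of $\kappa$: it requires a filtration of $\g$ satisfying the structural conditions of \cite[(b1)--(b6)]{Skr} --- in Proposition~\ref{prop:freeness} these are supplied by the $\Z$-grading of Theorem~\ref{th:grading} with $X\in\g(2)$, $\g_X$ in non-negative degrees, and the balance $\sudim\g_X=\sudim\g(0)+\sudim\g(1)$. You do not supply an analogous filtration adapted to $\p$ and $\mathfrak u^-$ (the eigenvalue grading by $\ad s$ need not be integral, and even if it were, the non-negativity and balance conditions would have to be verified). Without this, the ``downward induction on a PBW-monomial order'' and the resulting freeness of $M$ over $\rea{\chi}{\mathfrak u^-}$ are unsupported assertions. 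The paper's iterative reduction to rank one sidesteps exactly this issue, at the cost of the case analysis into three types of roots and the odd-reflection bookkeeping of Lemma~\ref{lem:ME4}.
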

The above theorem is a super analogue of a theorem of
Friedlander-Parshall \cite[Theorem~3.2]{FP} which was in turn
built on the earlier work of Kac-Weisfeiler \cite{KW}. By the same
argument as in \cite[pp.~1068]{FP}, we reduce the proof of
Theorem~\ref{th:morita} to the following theorem, which is a super
analogue of the main theorem of \cite{KW} and
\cite[Theorem~8.5]{FP}.

\begin{theorem}\label{th:ME-simple}
Retain the above notation. Then for any irreducible
$\rea{\chi}{\g}$-module $M$, $M^{\mathfrak{u}}$ is an irreducible
$\rea{\chi}{\p}$-module and the natural map
\[
\rea{\chi}{\g} \otimes_{\rea{\chi}{\p}} M^{\mathfrak{u}}
\rightarrow M
\]
is an isomorphism. Also, $M$ is a projective
$\rea{\chi}{\mathfrak{u}}$-module.
\end{theorem}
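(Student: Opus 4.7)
The plan is to reduce the three conclusions of the theorem to a single dimension identity, and then establish that identity. Two preliminary observations set the stage. First, because $\chi$ vanishes on $\mathfrak{u}$ and $\mathfrak{u}$ is $p$-nilpotent (being the nilradical of a parabolic), Propositions \ref{!simple} and \ref{prop:Fro} imply that $\rea{\chi}{\mathfrak{u}}=\rea{0}{\mathfrak{u}}$ is a local self-injective superalgebra whose only simple module is the trivial one; consequently $M^\mathfrak{u}\neq 0$ for every nonzero $M$, and projectivity, injectivity, and freeness coincide for $\rea{0}{\mathfrak{u}}$-modules. Second, since $\mathfrak{l}$ normalizes $\mathfrak{u}$, the subspace $M^\mathfrak{u}$ is stable under $\mathfrak{p}$ with $\mathfrak{u}$ acting trivially, and so carries a natural $\rea{\chi}{\mathfrak{p}}$-module structure.

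Now fix a simple $\rea{\chi}{\g}$-module $M$. PBW (Proposition \ref{prop:pbw-u}) applied to the vector space decomposition $\g=\mathfrak{u}^-\oplus\mathfrak{p}$ shows that $\rea{\chi}{\g}$ is free as a right $\rea{\chi}{\mathfrak{p}}$-module of rank $p^{\dim\mathfrak{u}_{\bar 0}}2^{\dim\mathfrak{u}_{\bar 1}}$ (using $\dim\mathfrak{u}^-_i=\dim\mathfrak{u}_i$). The natural map
\[
\phi\colon\ \rea{\chi}{\g}\otimes_{\rea{\chi}{\mathfrak{p}}}M^{\mathfrak u}\ \longrightarrow\ M
\]
is therefore surjective, since its image is a nonzero $\g$-submodule of the simple $M$, while its source has dimension $p^{\dim\mathfrak{u}_{\bar 0}}2^{\dim\mathfrak{u}_{\bar 1}}\dim M^\mathfrak{u}$. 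Hence $\phi$ is an isomorphism if and only if
\[
\dim M\ =\ p^{\dim\mathfrak{u}_{\bar 0}}\,2^{\dim\mathfrak{u}_{\bar 1}}\,\dim M^{\mathfrak u},
\]
and by the self-injective local structure of $\rea{0}{\mathfrak{u}}$ this identity is equivalent to $M$ being free, hence projective, as a $\rea{\chi}{\mathfrak{u}}$-module. Once $\phi$ is an isomorphism, the simplicity of $M^\mathfrak{u}$ follows formally: if $N\subsetneq M^\mathfrak{u}$ were a proper nonzero $\mathfrak{p}$-submodule, then the freeness of $\rea{\chi}{\g}$ over $\rea{\chi}{\mathfrak{p}}$ would make $\rea{\chi}{\g}\otimes_{\rea{\chi}{\mathfrak{p}}} N \hookrightarrow M$ a proper nonzero $\g$-submodule. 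Thus the entire theorem collapses to the displayed dimension identity.

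The main obstacle is proving that identity, and this is where the super setting departs from the Friedlander--Parshall argument \cite{FP}. I would proceed by a rank-one reduction: filter the passage from $\mathfrak{l}$ to $\g$ through a chain of intermediate Levi subalgebras $\mathfrak{l}=\mathfrak{l}_0\subset\mathfrak{l}_1\subset\cdots\subset\mathfrak{l}_N=\g$ in which each $\mathfrak{l}_{i+1}$ is obtained from $\mathfrak{l}_i$ by adjoining a single root pair $(\alpha,-\alpha)$, and iterate, using the transitivity of invariants and of tensor induction. The rank-one step splits into three qualitatively different cases dictated by the type of $\alpha$: an even root yields the classical $\mathfrak{sl}_2$-type situation, handled as in \cite{KW, FP}; an odd isotropic root yields a small solvable rank-one superalgebra with $[\g_\alpha,\g_\alpha]=0$, where the identity is elementary because the relevant reduced enveloping algebra has very small dimension and the $\mathfrak{u}'$-invariants can be exhibited explicitly; and an odd non-isotropic root yields an $\osp(1|2)$-type piece, for which the identity is forced by the explicit description of $\rea{\chi}{\osp(1|2)}$-modules worked out in Section \ref{sec:osp12}. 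The odd non-isotropic case is the delicate one, because this is where the type $Q$ phenomenon of Proposition \ref{prop:q1} can appear and the $2$-power factor in the dimension identity must be tracked carefully; I expect this rank-one $\osp(1|2)$ analysis to be the genuine technical heart of the argument.
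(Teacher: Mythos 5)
Your framework is sound: reducing all three assertions to the single dimension identity $\dim M = p^{\dim\mathfrak{u}_{\bar 0}}2^{\dim\mathfrak{u}_{\bar 1}}\dim M^{\mathfrak u}$, via PBW-freeness of $\rea{\chi}{\g}$ over $\rea{\chi}{\p}$ and the local Frobenius structure of $\rea{0}{\mathfrak{u}}$, is correct and cleanly organized. You also correctly identify the three rank-one types ($\mathfrak{sl}(2)$, $\mathfrak{sl}(1|1)$, $\osp(1|2)$) and the role that Section~\ref{sec:osp12} (Proposition~\ref{prop:osp12semisimple}) plays in the $\osp(1|2)$ case; this is exactly what the paper's Lemma~\ref{lem:ME2} codifies.

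However, there is a genuine gap in how you propose to reach the dimension identity. A chain of \emph{Levi} subalgebras $\mathfrak{l}=\mathfrak{l}_0\subset\cdots\subset\mathfrak{l}_N=\g$ in which each step adjoins a single root pair $(\alpha,-\alpha)$ does not exist in general: adding one root pair to a Levi almost never yields a Levi (already in $\mathfrak{sl}(3)$, $\h\oplus\g_{\pm\alpha_1}\oplus\g_{\pm\alpha_2}$ is not a subalgebra), so you cannot ``iterate transitivity of invariants'' along such a chain. The paper instead follows Friedlander--Parshall's scheme: it keeps $\mathfrak{l}$ and $\p$ fixed and enumerates the roots $\Phi_{\mathfrak{u}}=\{\delta_1^*,\ldots,\delta_t^*\}$ so that there is a sequence of \emph{positive systems} $\Phi^+_i$ for the same root system in which $\delta_i$ is simple, and closed subsets $\Psi_i=\{-\delta_1^*,\ldots,-\delta_i^*\}$ normalized by $\Phi^+_s$ (Lemma~\ref{lem:ME4}); the induction then peels off one $\delta_i^*$ at a time through the associated rank-one parabolic, invoking Lemma~\ref{lem:ME2} at each step. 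The nontrivial super input here, entirely absent from your proposal, is that this chain of positive systems is constructed by \emph{odd reflections} rather than Weyl group elements, because in the super setting the Weyl group does not act transitively on Borel subalgebras nor send a positive system to its negative. Your proposal identifies the right rank-one computations but misses the combinatorial engine (the odd-reflection enumeration) that makes a one-root-at-a-time induction possible, and substitutes for it a nonexistent Levi chain.
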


\subsection{Proof of Theorem~\ref{th:ME-simple}}

The proof of Theorem~\ref{th:ME-simple} follows the same strategy
as in the Lie algebra case given in \cite[Sect.~8]{FP}, with a few
modifications. In the following, we only formulate and establish
the parts which differ more substantially, while omitting the
parts which are completely analogous to the Lie algebra case and
referring to {\em loc. cit.} for details.

The proof in \cite[Sect.~8]{FP} is based on four lemmas (Lemmas
8.1, 8.2, 8.3, and 8.4 therein), and it can be literally copied to
the super setup once the super analgoues of the four lemmas are
formulated and established. The super analogues of Lemmas 8.1 and
8.3 are obtained in a straightforward manner and thus skipped. On
the other hand, the super analogues of the remaining lemmas need
some extra care.

The complication in the following super analogue of
\cite[Lemma~8.2]{FP} arises from the fact that there are three
types of roots in $\g$:
\begin{enumerate}
\item[(i)] $\Phi \cap \mathbb Q \delta =\{\pm \delta\}$ with
$\delta$ even;

\item[(ii)] $\Phi \cap \mathbb Q \delta =\{\pm \delta \}$ with
$\delta$ odd;

\item[(iii)] $\Phi \cap \mathbb Q \delta =\{\pm \delta, \pm
2\delta \}$ with $\delta$ odd and $2\delta$ even.
\end{enumerate}
They correspond to three rank one Lie superalgebras
$\mathfrak{sl}(2)$, $\mathfrak{sl}(1|1)$ and  $\osp(1|2)$
respectively. For latter purpose, for such a $\delta$, we shall
denote
\begin{eqnarray*}
\delta^* = \left\{
\begin{array}{ll}
 \delta, & \text{ in case (i) and (ii)} \\
 \{ \delta, 2\delta\}, & \text{ in case (iii)}.
\end{array}
\right.
\end{eqnarray*}

\begin{lemma}\label{lem:ME2}
Let $\p =\mathfrak{l} \oplus \mathfrak{u}$ be a parabolic
subalgebra of a basic classical Lie superalgebra $\g$ containing a
Borel subalgebra $\mathfrak{b}$. Assume that the commutator
subalgebra $\mathfrak{l}' = [\mathfrak{l} ,\mathfrak{l}]$ of
$\mathfrak{l}$ is isomorphic to one of the following:
\begin{itemize}
\item[(i)] $\mathfrak{sl}(2)$ with standard basis $\{ e, f, h\}$
such that $\chi(e)=0 =\chi(f)$ and $\chi(h) \neq 0$;

\item[(ii)] $\mathfrak{sl}(1|1)$ with basis $\{X=
\begin{pmatrix} 0 & 1 \\0 &0 \end{pmatrix}, Y =
\begin{pmatrix} 0 & 0 \\1 &0 \end{pmatrix}, h = \begin{pmatrix} 1 & 0 \\0 &1
\end{pmatrix}\}$ such that $\chi(h) \neq 0$;

\item[(iii)] $\osp(1 | 2)$ with standard basis $\{ e, f, h ; E,
F\}$ (see Section~\ref{sec:osp12}) such that $\chi(e)=0 =\chi(f)$
and $\chi(h) \neq 0$.
\end{itemize}
Let $V$ be a finite dimensional $\rea{\chi}{\mathfrak{b}}$-module
upon which $e$ (resp. $E$, $X$) acts trivially. Denote by $I =
\rea{\chi}{\p} \otimes_{\rea{\chi}{\mathfrak{b}}}-:
\rea{\chi}{\mathfrak{b}}$-$\mathfrak{mod} \rightarrow
\rea{\chi}{\p}$-$\mathfrak{mod}$ the induction functor. Then
$I(V)^e = V$ (resp. $I(V)^E = V$, $I(V)^X = V$).
\end{lemma}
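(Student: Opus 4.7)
The plan is a direct PBW computation in each of the three cases, unified by the observation that the hypothesis $\chi(h)\neq 0$ forces $h$ to act on $V$ with no eigenvalue in $\mathbb{F}_p$.

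First, by PBW applied to the decomposition $\p=\mathfrak{n}^-_{\mathfrak{l}}\oplus \mathfrak{b}$, where $\mathfrak{n}^-_{\mathfrak{l}}$ denotes the negative-root part of the rank-one algebra $\mathfrak{l}'$ (which also spans $\mathfrak{n}^-_{\mathfrak{l}}$ for the full Levi, as the center of $\mathfrak{l}$ sits in $\h\subset\mathfrak{b}$), one has $I(V)\cong \rea{\chi}{\mathfrak{n}^-_{\mathfrak{l}}}\otimes_K V$ as $K$-spaces. This gives explicit bases $\{f^i\otimes v\}_{0\le i<p,v\in V}$ in case (i), $\{1\otimes v,\,Y\otimes v\}_{v\in V}$ in case (ii), and $\{f^iF^\epsilon\otimes v\}_{0\le i<p,\,\epsilon\in\{0,1\},\,v\in V}$ in case (iii). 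The inclusion $V\subseteq I(V)^e$ (resp.\ $I(V)^X$, $I(V)^E$) is immediate because $e$ (resp.\ $X$, $E$) lies in $\mathfrak{b}\cap\mathfrak{l}'$ and annihilates $V$ by hypothesis.

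Next I would compute the action of $e$ (resp.\ $X$, $E$) on the remaining PBW basis vectors using standard commutation identities in $\mathfrak{l}'$, namely $[e,f^i]=if^{i-1}(h-i+1)$ in case (i), $[X,Y]=h$ together with $XY+YX=h$ in case (ii), and $[E,f^i]=\pm i f^{i-1}F$, $[E,F]=h$, $F^2=\pm f$ in case (iii). Since $e,X,E$ annihilate $V$ while $h\in\mathfrak{b}$ acts on $V$ via its $\rea{\chi}{\mathfrak{b}}$-module structure, one obtains formulas of the form
\[
e\cdot(f^i\otimes v)= i f^{i-1}\otimes(h-i+1)v,\qquad X\cdot(Y\otimes v)=1\otimes h v,
\]
and in case (iii) two companion formulas expressing $E\cdot(f^i\otimes v)$ and $E\cdot(f^iF\otimes w)$ back in the PBW basis, the second producing a linear combination of $f^i\otimes(h\pm i)w$ (from $[E,F]=h$ and $F^2=\pm f$) and a term feeding into the odd piece.

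Finally, writing a general element of $I(V)^e$ (resp.\ $I(V)^X$, $I(V)^E$) as a PBW sum and setting the action equal to zero yields, coefficient by coefficient, a finite system of equations of the form $(h-c)v'=0$ with $c\in\mathbb{F}_p$ for every higher PBW component $v'\in V$. In each of $\mathfrak{sl}(2)$, $\mathfrak{sl}(1|1)$, and $\osp(1|2)$ one has $h^{[p]}=h$, so $V$ satisfies $h^p-h=\chi(h)^p$ with $\chi(h)\ne 0$; hence the spectrum of $h$ on $V$ is disjoint from $\mathbb{F}_p$ and $h-c$ is invertible on $V$ for every $c\in\mathbb{F}_p$. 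This forces every higher PBW component to vanish, giving the desired equality. The main technical nuisance is the super sign bookkeeping in case (iii), where $E$ has to be moved past $f^iF^\epsilon$ in the super enveloping algebra using $F^2=\tfrac12[F,F]$ and $EF+FE=[E,F]$ to close the $E$-action on the odd PBW piece back into the basis; once these identities are in place the argument runs in parallel to the $\mathfrak{sl}(2)$ treatment of \cite[Lemma~8.2]{FP}.
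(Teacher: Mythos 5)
Your proof is correct but proceeds differently from the paper's. The paper's key step is to show that $\rea{\chi}{\mathfrak{l}'}$ is a \emph{semisimple} superalgebra --- this is classical for $\mathfrak{sl}(2)$, is proved inline for $\mathfrak{sl}(1|1)$ by exhibiting $p$ pairwise non-isomorphic two-dimensional simple baby Verma modules accounting for all of $\dim\rea{\chi}{\mathfrak{sl}(1|1)}=4p$, and is deferred to Proposition~\ref{prop:osp12semisimple} for $\osp(1|2)$. Semisimplicity makes $I(V)$ projective, hence free, over the local subalgebra generated by the raising element, and a dimension count then yields $\dim I(V)^{e}=\dim I(V)/p=\dim V$ (resp.\ $\dim I(V)/2$, $\dim I(V)/2p$). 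Your route replaces this by a direct PBW computation driven by the same invertibility input --- $h^{[p]}=h$ and $\chi(h)\neq 0$ force the spectrum of $h$ on $V$ outside $\mathbb{F}_p$, so $h-c$ is invertible for every $c\in\mathbb{F}_p$. This is more elementary and self-contained (in particular it avoids the forward reference to Section~\ref{sec:osp12}), at the cost of some super sign-bookkeeping in the $\osp(1|2)$ case; the paper's approach, on the other hand, records the semisimplicity of $\rea{\chi}{\mathfrak{l}'}$, which is a useful fact in its own right. Two cosmetic corrections to your description of case~(iii): not all the resulting equations have the form $(h-c)v'=0$ --- the $F$-component equations read $iv'=0$ for $i\in\{1,\dots,p-1\}$, which is simpler still; and once $F^{2}=-f$ is used, $E\cdot(f^{i}F\otimes w)=f^{i}\otimes(h+i)w$ lands entirely in the even PBW piece, so there is no residual term ``feeding into the odd piece.'' Neither affects the argument.
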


\begin{proof}
The key here as in the proof of \cite[Lemma~8.2]{FP} is to show
that $\rea{\chi}{\mathfrak{l}'}$ is a semisimple superalgebra.
This is well known for the $\mathfrak{sl}(2)$ case, and we will
prove it for the $\osp(1|2)$ case in Sect.~\ref{sec:osp12} (see
Proposition~\ref{prop:osp12semisimple} for a more precise
statement).

We now consider the case when $\mathfrak{l}'$ is isomorphic to
$\mathfrak{sl}(1|1)$ with $\chi(h) \neq 0$. For $\lambda$
satisfying $\lambda^p - \lambda - \chi(h)^p = 0$, the baby Verma
module
\[
Z_{\chi}(\lambda) = \rea{\chi}{\mathfrak{sl}(1|1)}
\otimes_{\rea{\chi}{KX+Kh}}K_{\lambda}
\]
is two-dimensional with a basis $\{ v_{\lambda}=1\otimes 1_\la,
Yv_{\lambda}\}$, where $K_{\lambda} =K 1_\la$ denotes the
1-dimensional representation of $KX+Kh$ with $X. 1_\la =0, h.
1_\la =\la 1_\la$. The action of $\mathfrak{sl}(1|1)$ is given by
$$
 Xv_{\lambda} = 0,\; X Y v_{\lambda} = \lambda v_{\lambda},\; h
v_{\lambda} = \lambda v_{\lambda},\; h Y v_{\lambda} = \lambda Y
v_{\lambda},\; Y Y v_{\lambda} =0.
$$
Observe that each $Z_{\chi}(\lambda)$ is irreducible of type $M$,
and $Z_{\chi}(\lambda)$ are pairwise non-isomorphic (there are $p$
of them in total). Since $\dim \rea{\chi}{\mathfrak{sl}(1|1)} =
4p$, this forces each $Z_{\chi}(\lambda)$ to be projective and
$\rea{\chi}{\mathfrak{sl}(1|1)}$ to be semisimple.

Thus each $U_\chi(\mathfrak l')$-module is projective and hence
projective as a module over its subalgebra generated by $X$ (resp.
by $E$ in the $\osp(1|2)$ case). Hence $I(V)^X$ (resp. $I(V)^E$)
has dimension equal to $\dim I(V)/2$ (resp. $\dim I(V)/2p$), which
coincides with $\dim V$.
\end{proof}

We now go back to the notation for $\chi, \mathfrak{b}$ and
$\mathfrak{p}$ as in \ref{sec:gen. char. ass.}. Let
$\mathfrak{p}^- = \mathfrak{l} \oplus \mathfrak{u}^-$ be the
parabolic subalgebra opposite to $\mathfrak{p}$. Denote
\begin{gather*}
\Phi^+_s: =\{\text{roots whose root vectors lie in } \mathfrak{n}
\cap
\mathfrak{l}\}, \quad \Phi^-_s = - \Phi^+_s;\\
\Phi_u: =\{\text{roots whose root vectors lie in } \mathfrak{u}\},
\quad  \Phi^-_u = -\Phi_u.
\end{gather*}

The following is a super analogue of \cite[Lemma~8.4]{FP} with a
different proof. The complication in the superalgebra setup arises
from the fact that there are odd roots and the longest element in
the Weyl group $W$ does not send a system of positive roots to its
opposite. We shall need the notion of odd reflections (which has
been used by Serganova and others in various situations; cf.
\cite{Ser, SW} for references).
\begin{lemma}\label{lem:ME4}
We can enumerate $\Phi_{\mathfrak{u}} =\{\delta_1^*, \ldots,
\delta_t^*\}$ as a sequence of singletons or pairs of roots so
that for each $i$,
\[
\Phi^+_i := \Phi^+_s \cup \{-\delta_1^*, \ldots, -\delta_{i-1}^*,
\delta_i^*, \ldots, \delta_t^*\}
\]
is a system of positive roots for $\Phi$ in which $\delta_i$ is a
simple root. Moreover, for each $i$, $ \Psi_i :=
\{-\delta_1^*,\ldots, -\delta_i^*\}$ is a closed subsystem of
$\Phi$ normalized by $\Phi^+_s$.
\end{lemma}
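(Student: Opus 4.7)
The plan is to build the sequence $\delta_1^*,\ldots,\delta_t^*$ greedily: set $\Phi^+_1:=\Phi^+$ and, at step $i$, choose $\delta_i$ to be a minimal element of $\Phi_{\mathfrak u}\cap\Phi^+_i$ with respect to the partial order $\alpha\succeq\beta\iff\alpha-\beta\in\Phi^+_i\cup\{0\}$; set $\delta_i^*=\{\delta_i\}$ in cases (i) and (ii), $\delta_i^*=\{\delta_i,2\delta_i\}$ in case (iii); and pass to $\Phi^+_{i+1}$ by flipping the signs of the roots in $\delta_i^*$. Since $|\Phi_{\mathfrak u}|$ is finite and one such pairing is flipped per step, the procedure terminates in $t$ steps with $\Phi^+_{t+1}=\Phi^+_s\cup\Phi^-_{\mathfrak u}$.

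The crux is checking, at each step, (a) that $\delta_i$ is simple in $\Phi^+_i$ and (b) that applying the appropriate reflection at $\delta_i$ produces a positive system differing from $\Phi^+_i$ exactly by flipping the signs of $\delta_i^*$. For (a), the parabolic $\mathfrak p\supset\mathfrak b$ induces a $\Z$-grading on $\g$ in which $\mathfrak u$ sits in strictly positive degrees and $\mathfrak l$ in degree zero; denote the resulting integer grading on roots by $\deg$. If $\delta_i=\beta+\gamma$ with $\beta,\gamma\in\Phi^+_i$, then $\deg(\beta)+\deg(\gamma)=\deg(\delta_i)>0$, so at least one of $\beta,\gamma$---say $\beta$---has positive degree and therefore lies in $\Phi_{\mathfrak u}\cap\Phi^+_i$; but then $\delta_i-\beta=\gamma\in\Phi^+_i$ shows $\delta_i\succ\beta$, contradicting minimality of $\delta_i$. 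Hence no such decomposition exists, i.e.\ $\delta_i$ is simple in $\Phi^+_i$.

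For (b), once $\delta_i$ is known simple in $\Phi^+_i$, the corresponding simple reflection at $\delta_i$---the Weyl reflection $s_{\delta_i}$ in cases (i) and (iii), the odd reflection $r_{\delta_i}$ in case (ii)---is, by definition in the odd case and by the standard Weyl-group argument in the even case, a bijection of $\Phi$ that negates exactly the roots in $\delta_i^*$ and permutes $\Phi^+_i\setminus\delta_i^*$. In case (iii) this uses that $s_{\delta_i}=s_{2\delta_i}$ flips both $\delta_i$ and $2\delta_i$ simultaneously, which is what justifies pairing them. Applying the reflection to $\Phi^+_i$ produces $\Phi^+_{i+1}$ in the stated form, while $\Phi^+_s$ is left untouched throughout.

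Closedness and $\Phi^+_s$-stability of $\Psi_i=\{-\delta_1^*,\ldots,-\delta_i^*\}$ then follow cleanly. If $\alpha,\beta\in\Psi_i$ with $\alpha+\beta\in\Phi$, then $-\alpha,-\beta\in\Phi_{\mathfrak u}$ and the subalgebra property $[\mathfrak u,\mathfrak u]\subset\mathfrak u$ force $-(\alpha+\beta)\in\Phi_{\mathfrak u}$, so $-(\alpha+\beta)=\delta_k^*$ for some $k$. Since $\alpha,\beta\in\Phi^+_{i+1}$ and positive systems are closed under root sums, $\alpha+\beta\in\Phi^+_{i+1}$, forcing $k\le i$ and hence $\alpha+\beta\in\Psi_i$. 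The identical argument with $\beta\in\Phi^+_s$ and $[\mathfrak l,\mathfrak u^-]\subset\mathfrak u^-$ yields normalization by $\Phi^+_s$. The main obstacle is the uniform treatment of the three root types: in case (iii) the pair $\{\delta_i,2\delta_i\}$ must be flipped together (flipping only one does not yield a positive system), and only in case (ii) is an odd rather than even reflection needed; the $\Z$-grading argument together with the standard description of simple reflections on positive systems of super root systems handles all three cases in one stroke.
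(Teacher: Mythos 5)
Your proof is correct and follows essentially the same route as the paper: build the sequence $\Phi^+_1,\dots,\Phi^+_t$ by successively applying even or odd simple reflections at roots $\delta_i\in\Phi_{\mathfrak u}$, using the key fact that such a reflection changes the positive system only by flipping $\delta_i^*$, and then read off closedness and $\Phi^+_s$-normalization from $\Psi_i=\Phi^+_{i+1}\cap\Phi^-_{\mathfrak u}$ together with $[\mathfrak u,\mathfrak u]\subset\mathfrak u$ and $[\mathfrak l,\mathfrak u^-]\subset\mathfrak u^-$. One genuine addition on your part is the explicit $\Z$-grading argument showing that $\Phi_{\mathfrak u}\cap\Phi^+_i$ always contains a root that is simple for $\Phi^+_i$; the paper simply picks $\delta_i\in\Pi_{i-1}\cap\Phi_{\mathfrak u}$ without verifying this intersection is nonempty, so your version fills a small gap (though note the relation $\alpha\succeq\beta\iff\alpha-\beta\in\Phi^+_i\cup\{0\}$ is not transitive, so it is cleaner to minimize height with respect to $\Phi^+_i$ or the $\Z$-degree rather than invoke a ``partial order'').
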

\begin{proof}
For a root $\delta$ in a set of simple roots $\tilde \Pi$
associated to the system of positive roots $\tilde{\Phi}^{+}$, let
$r_{\delta} : \Phi \rightarrow \Phi$ be the (even or odd)
reflection associated to $\alpha$. (see \cite{Ser} for the basic
properties of odd reflections). If $2\delta$ is a root, $r_\delta$
is by definition the even reflection $r_{2\delta}$. It is known
that $r_\delta \tilde{\Phi}^{+}$ is a system of positive roots,
$-\delta^* \in r_\delta \tilde{\Phi}^{+}$, and $r_\delta
\tilde{\Phi}^{+} \cap \tilde{\Phi}^{+} = \tilde{\Phi}^{+}
\backslash \delta^*$.

Denote by $\Pi_0$ the set of simple roots associated to the system
of positive roots $\Phi_0^+ =\Phi_s^- \cup \Phi_u$. Pick $\delta_1
\in \Pi_0\cap \Phi_u$. We proceed inductively. Assume that we have
defined $\Pi_0, \ldots, \Pi_{i-1}$ and have chosen $\delta_1 \in
\Pi_0\cap \Phi_u,\ldots, \delta_i \in \Pi_{i-1}\cap \Phi_u$. Put
$\Pi_i = r_{\delta_i}(\Pi_{i-1})$ and define $\Phi^+_i$ to be the
positive system determined by $\Pi_i$. Then we have
\[
\Phi_{i-1}^+ \setminus \{\delta_i^*\} = \Phi^+_i \setminus
\{-\delta_i^*\}.
\]
It follows that
\[
\Phi_{i-1}^+ \cap \Phi^+_0 =  (\Phi_i^+ \cap \Phi^+_0) \bigsqcup
\{\delta_i^*\},
\]
and thus $\vert \Phi_i^+ \cap \Phi^+_0 \vert$ is $1$ or $2$ less
than $\vert \Phi_{i-1}^+ \cap \Phi^+_0 \vert$. Repeating this
process, we obtain $\Phi^+_i$ and $\Psi_i :=\Phi^+_{i+1} \cap
\Phi^-_u$ ($i=1,\ldots, t$), until $\Phi^+_t \cap \Phi^+_0
=\emptyset$.

It follows from $\Psi_i =\Phi_{i+1}^+ \cap \Phi_u^-$ that $\Psi$
is a closed subsystem of $\Phi$. Given $\alpha \in \Phi^+_s$ so
that $\alpha -\delta_j$ (or $\alpha -2\delta_j$) is a root, then
$\alpha -\delta_j$ (or $\alpha -2\delta_j$) lies in
$\Phi_{i+1}^+$; moreover it lies in $\Psi_i =\Phi_{i+1}^+ \cap
\Phi_u^-$ since $\Phi_u^-$ is normalized by $\Phi_s^+$.
\end{proof}

\subsection{Proof of the Super KW Property for $\g$}
Now we are in a position to prove the Super KW Property with
arbitrary $p$-characters.

\begin{theorem}[Super Kac-Weisfeiler Property] \label{th:KW}
Let $\g$ be a basic classical Lie superalgebra as in
Section~\ref{sec:bcLsa-p}, and let $\chi \in \ev \g^*$. Let $d_i =
\dim \g_i - \dim \g_{\chi, i}$, $i \in \Z_2$. Then the dimension
of every $\rea{\chi}{\g}$-module $M$ is divisible by
$p^{\frac{d_0}{2}}2^{\lfloor \frac{d_1}{2} \rfloor}$.
\end{theorem}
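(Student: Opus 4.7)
The plan is to deduce Theorem~\ref{th:KW} from the nilpotent case (Theorem~\ref{th:KW-N}) by means of the Morita equivalence of Theorem~\ref{th:morita}. Write $\chi = \chi_s + \chi_n$ for the Jordan decomposition, put $\mathfrak l = \g_{\chi_s}$, and let $\p = \mathfrak l \oplus \mathfrak u$ be the parabolic subalgebra constructed in Section~\ref{sec:gen. char. ass.}. Then $\chi_s|_{\mathfrak l} = 0$ so $\chi|_{\mathfrak l} = \chi_n|_{\mathfrak l}$ is nilpotent, and by the proposition in Section~\ref{sec:gen. char. ass.}, $\mathfrak l$ is a direct sum of basic classical Lie superalgebras, so Theorem~\ref{th:KW-N} is available for $(\mathfrak l, \chi|_{\mathfrak l})$ via Remark~\ref{rem:semisimple}.

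Given a finite-dimensional $\rea{\chi}{\g}$-module $M$, the equivalence of Theorem~\ref{th:morita} yields an isomorphism $M \cong \rea{\chi}{\g} \otimes_{\rea{\chi}{\p}} M^{\mathfrak u}$, where $M^{\mathfrak u}$ is regarded as a $\rea{\chi}{\mathfrak l}$-module with $\mathfrak u$ acting trivially. A PBW argument as in Proposition~\ref{prop:pbw-u} applied to the triangular decomposition $\g = \mathfrak u^- \oplus \mathfrak l \oplus \mathfrak u$ shows that $\rea{\chi}{\g}$ is free as a right $\rea{\chi}{\p}$-module of rank $\dim \rea{\chi}{\mathfrak u^-} = p^{\dim \mathfrak u^-_{\bar 0}} 2^{\dim \mathfrak u^-_{\bar 1}}$. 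Since the opposite parabolic satisfies $\dim \mathfrak u^-_i = \dim \mathfrak u_i$ for $i \in \Z_2$, this gives the dimension formula
\[
\dim M = p^{\dim \mathfrak u_{\bar 0}} \, 2^{\dim \mathfrak u_{\bar 1}} \, \dim M^{\mathfrak u}.
\]

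The codimension bookkeeping is then the remaining step. Since $\g_\chi \subset \g_{\chi_s} = \mathfrak l$, the centralizer of $\chi$ in $\mathfrak l$ coincides with $\g_\chi$, so setting $d_i = \dim \g_i - \dim \g_{\chi,i}$ and $d_i' = \dim \mathfrak l_i - \dim \mathfrak l_{\chi,i}$ one finds
\[
d_i - d_i' = \dim \g_i - \dim \mathfrak l_i = 2 \dim \mathfrak u_i, \qquad i \in \Z_2.
\]
Hence $d_0/2 = d_0'/2 + \dim \mathfrak u_{\bar 0}$, and since $d_1 \equiv d_1' \pmod 2$ we also get $\lfloor d_1/2 \rfloor = \lfloor d_1'/2 \rfloor + \dim \mathfrak u_{\bar 1}$. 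Applying Theorem~\ref{th:KW-N} (together with Remark~\ref{rem:semisimple}) to the $\rea{\chi}{\mathfrak l}$-module $M^{\mathfrak u}$ yields $p^{d_0'/2} 2^{\lfloor d_1'/2 \rfloor} \mid \dim M^{\mathfrak u}$; combining with the dimension formula above gives $p^{d_0/2} 2^{\lfloor d_1/2 \rfloor} \mid \dim M$, which is the desired divisibility.

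The substantive work has already been carried out in proving the Morita equivalence (Theorems~\ref{th:morita} and \ref{th:ME-simple}) and the nilpotent Super KW Property (Theorem~\ref{th:KW-N}); the present step is a clean assembly of these two inputs with the codimension arithmetic. The only place one must be careful is the floor function in the $2$-power: the key observation is that $d_1 - d_1' = 2\dim \mathfrak u_{\bar 1}$ is always even, so the floors behave additively even when $d_1$ (and hence $d_1'$) is odd.
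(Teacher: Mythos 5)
Your proposal matches the paper's own argument: the paper proves Theorem~\ref{th:KW} by combining Theorem~\ref{th:KW-N} (the nilpotent case, applied to $\mathfrak{l}$ via Remark~\ref{rem:semisimple}) with the Morita equivalence of Theorems~\ref{th:morita} and \ref{th:ME-simple}, and carries out exactly the same codimension bookkeeping $\sudim \g - \sudim \g_\chi = 2\sudim\mathfrak{u}^- + (\sudim\mathfrak{l} - \sudim\mathfrak{l}_{\chi_n})$. Your write-up is correct and is a somewhat more explicit version of the same proof, including the helpful observation that $d_1 - d_1' = 2\dim\mathfrak{u}_{\bar 1}$ is even so the floor functions add cleanly.
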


\begin{proof}
Observe that
\begin{align*}
\sudim \g - \sudim \g_{\chi} &= \sudim \g - \sudim
\mathfrak{l}_{\chi_n} \\
& = 2 \sudim \mathfrak{u^-} + (\sudim \mathfrak{l} - \sudim
\mathfrak{l}_{\chi_n }).
\end{align*}
The theorem is now an easy consequence of Remark~\ref{rem:rea},
Remark~\ref{rem:semisimple}, Theorem~\ref{th:KW-N}, and
Theorem~~\ref{th:ME-simple}.
\end{proof}

\begin{corollary}
Assume that $\chi$ is regular semisimple (i.e. $\g_\chi$ is a
Cartan subalgebra $\h$ of $\g$). Then $U_\chi(\g)$ is a semisimple
superalgebra. Furthermore, the baby Verma modules $Z_\chi(\la)$
with $\la$ such that $\la (h)^p -\la(h) =\chi(h)^p$ for $h\in \h$
form a complete list of simple $U_\chi(\g)$-modules.
\end{corollary}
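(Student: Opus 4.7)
The plan is to deduce the corollary as a direct application of the Morita equivalence of Theorem~\ref{th:morita}, combined with an elementary analysis of the reduced enveloping algebra of the Cartan. Since $\chi$ is regular semisimple, its Jordan decomposition is trivial: $\chi_s=\chi$, $\chi_n=0$, and the Levi subalgebra $\mathfrak{l}=\g_{\chi_s}$ coincides with the Cartan $\h$ itself. Consequently the parabolic $\mathfrak{p}=\mathfrak{l}\oplus\mathfrak{u}$ of Section~\ref{sec:gen. char.} reduces to a Borel subalgebra $\mathfrak{b}=\h\oplus\mathfrak{n}^+$ with $\mathfrak{u}=\mathfrak{n}^+$, and the compatibility $\chi(\mathfrak{n}^+)=0$ is automatic since $\chi$ corresponds under $(\cdot,\cdot)$ to an element of $\h$ while root spaces are orthogonal to $\h$. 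Theorem~\ref{th:morita} then identifies $\rea{\chi}{\g}$-$\mathfrak{mod}$ with $\rea{\chi}{\h}$-$\mathfrak{mod}$, with inverse equivalence $N\mapsto \rea{\chi}{\g}\otimes_{\rea{\chi}{\mathfrak{b}}}N$.

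Next I would analyze $\rea{\chi}{\h}$ directly. Since $\h\subset\ev\g$ is abelian, Proposition~\ref{prop:pbw-u} gives $\dim \rea{\chi}{\h}=p^{\dim\h}$ and $\rea{\chi}{\h}$ is commutative. As $K$ is algebraically closed, the simple $\rea{\chi}{\h}$-modules are one-dimensional and are precisely the $K_\lambda$ for $\lambda\in\Lambda_\chi$ (those linear functionals $\lambda:\h\to K$ with $\lambda(h)^p-\lambda(\pth h)=\chi(h)^p$); hence there are $|\Lambda_\chi|=p^{\dim\h}$ of them, pairwise non-isomorphic. The quotient $\rea{\chi}{\h}/J$ by the Jacobson radical is then a commutative semisimple $K$-algebra with $p^{\dim\h}$ one-dimensional simple modules, so $\rea{\chi}{\h}/J\cong K^{p^{\dim\h}}$; comparing dimensions forces $J=0$, whence $\rea{\chi}{\h}$ is itself semisimple.

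Finally, Morita equivalence preserves semisimplicity, so $\rea{\chi}{\g}$ is semisimple. The inverse equivalence sends the simple $\rea{\chi}{\h}$-module $K_\lambda$ (inflated to a $\rea{\chi}{\mathfrak{b}}$-module by letting $\mathfrak{n}^+$ act trivially) to $\rea{\chi}{\g}\otimes_{\rea{\chi}{\mathfrak{b}}}K_\lambda=Z_\chi(\lambda)$, showing that $\{Z_\chi(\lambda)\}_{\lambda\in\Lambda_\chi}$ is a complete, irredundant list of simple $\rea{\chi}{\g}$-modules. I do not foresee any substantive obstacle here; the only mildly delicate point is the semisimplicity of $\rea{\chi}{\h}$, which reduces to a dimension count of one-dimensional modules over a finite-dimensional commutative algebra over an algebraically closed field, and everything else is bookkeeping on top of Theorem~\ref{th:morita}.
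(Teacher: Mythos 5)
Your argument is correct, and it diverges from the paper's in the second half. Both proofs deduce semisimplicity of $\rea{\chi}{\g}$ from the Morita equivalence (Theorem~\ref{th:morita}) with $\rea{\chi}{\h}$; you additionally spell out why $\rea{\chi}{\h}$ is semisimple via a dimension count on the $p^{\dim\h}$ pairwise non-isomorphic one-dimensional modules $K_\lambda$, which the paper leaves implicit. The genuine difference lies in how the $Z_\chi(\lambda)$ are identified as the simple modules. The paper's route is to compute $\sudim\g - \sudim\g_\chi = 2\dim\ev{\mathfrak n}\,|\,2\dim\od{\mathfrak n}$ and then invoke the Super KW Property (Theorem~\ref{th:KW}): since $\dim Z_\chi(\lambda) = p^{\dim\ev{\mathfrak n}}2^{\dim\od{\mathfrak n}}$ achieves the divisibility bound exactly, each $Z_\chi(\lambda)$ must already be simple, with pairwise non-isomorphism supplied by a highest-weight argument. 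You instead observe that the inverse Morita functor $\rea{\chi}{\g}\otimes_{\rea{\chi}{\mathfrak{b}}}-$ sends the simple object $K_\lambda$ directly to $Z_\chi(\lambda)$, so the baby Vermas are simple and pairwise non-isomorphic because an equivalence of abelian categories preserves both properties. Your route is more economical in that it does not re-invoke the full Super KW Property on top of the Morita theorem (which is itself one of the two pillars on which Theorem~\ref{th:KW} rests); the paper's route has the minor advantage of illustrating the divisibility bound in action. Both are sound, and your proposal is complete.
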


\begin{proof}
By Theorem~~\ref{th:morita}, $U_\chi(\g)$ is Morita equivalent to
$U_\chi(\h)$ which is semisimple by the assumption of regular
semisimplicity on $\chi$. Hence $U_\chi(\g)$ is a semisimple
superalgebra. Note that $\sudim \g - \sudim \g_{\chi} =2\dim \ev
{\mathfrak n} | 2 \dim \od {\mathfrak n}.$ By Theorem~\ref{th:KW},
the $\g$-module $Z_\chi(\la)$ is simple, since the dimension of
$Z_\chi(\la)$ is $p^{\dim \ev {\mathfrak n}} 2^{\dim \od
{\mathfrak n}}$. For different $\la$, $Z_\chi(\la)$ are
nonisomorphic by high weight consideration.
\end{proof}

\begin{corollary}
Let $\g=\mathfrak{osp}(1|2n)$ be the Lie superalgebra of type
B(0,$n$). Let $\chi$ be  regular nilpotent (i.e. the corresponding
$X \in \ev\g$ is a regular nilpotent element). Then, the baby
Verma modules of $\rea{\chi}{\g}$ are simple.
\end{corollary}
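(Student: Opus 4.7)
The plan combines the Skryabin-type freeness of Proposition~\ref{prop:freeness}, the type $Q$ quotient from Proposition~\ref{prop:q1}, and the explicit $\mathbb Z$-grading of Section~\ref{sec:osp-grading} specialized to a regular nilpotent $X$ in $\g=\osp(1|2n)$. The critical numerics are $d_0=2n^2$ and $d_1=2n-1$, so the Super KW bound $p^{n^2}2^{n-1}$ is exactly half the baby Verma dimension $\dim Z_\chi(\lambda)=p^{n^2}2^n$, and the missing factor of two will be supplied by the unique simple module of $\mathfrak{q}_1(K)$.

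First I would analyze the grading. Since $\sudim V = 1|2n$ and $X$ acts as a single Jordan block of size $2n$ on $\od V$ and zero on $\ev V$, the construction of Section~\ref{sec:osp-grading} places $\ev V$ in degree $0$ and $\od V$ in the odd degrees $-2n+1,-2n+3,\ldots,2n-1$. Consequently $\g(0)=\h$, even root spaces sit in nonzero even degrees, odd root spaces sit in odd degrees, and $\g(-1)=\g(-1)_{\bar 1}$ is one-dimensional. Hence $r=1$ in the notation of Section~\ref{sec:subalgm}, so $\m = \oplus_{k\ge 2}\g(-k)$ with $\sudim \m = n^2|(n-1)$ and $\m' = \m \oplus \g(-1) = \n^-$. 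The identity $\dim Z_\chi(\lambda) = p^{n^2}2^n = 2\dim \rea{\chi}{\m}$ then forces, via Proposition~\ref{prop:freeness}, the baby Verma to be $\rea{\chi}{\m}$-free of rank exactly $2$, and the isomorphism $M\cong \rea{\chi}{\m}^*\otimes M^{\m}$ from its proof yields $\dim Z_\chi(\lambda)^{\m}=2$.

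A short bracket computation $[\g(-1),\m]\subseteq \oplus_{k\ge 3}\g(-k)$, on which $\chi$ vanishes, shows that for every $\rea{\chi}{\g}$-module $M$ the subspace $M^{\m}$ is stable under $\m'$ and the resulting action factors through $\rea{\chi}{\m'}/N_{\m'}\cong \mathfrak{q}_1(K)$ by Proposition~\ref{prop:q1}. Since the unique simple $\mathfrak{q}_1(K)$-module is $2$-dimensional, $Z_\chi(\lambda)^{\m}$ must be this irreducible. Now given any nonzero $\rea{\chi}{\g}$-submodule $N\subseteq Z_\chi(\lambda)$, Proposition~\ref{prop:freeness} makes $N$ itself $\rea{\chi}{\m}$-free, so $N^{\m}\neq 0$; being a nonzero $\mathfrak{q}_1(K)$-submodule of the simple $Z_\chi(\lambda)^{\m}$, it coincides with all of $Z_\chi(\lambda)^{\m}$, and freeness forces $\dim N = 2\dim\rea{\chi}{\m} = \dim Z_\chi(\lambda)$. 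Hence $N = Z_\chi(\lambda)$, proving simplicity for every $\lambda$. The one step requiring genuine verification is the grading fact $\sudim \g(-1) = 0|1$ in the regular nilpotent case; granted this, the rest is a clean repackaging of the machinery developed in Section~\ref{sec:proof}.
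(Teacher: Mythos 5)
Your argument is correct, but it takes a longer, more hands-on route than the paper, and it contains a notational slip that hides the shortest path. The paper defines $\lfloor a \rfloor$ to be the \emph{least integer upper bound} of $a$ (i.e.\ what is standardly written as a ceiling), so with $d_1 = 2n-1$ one has $\lfloor d_1/2 \rfloor = n$, not $n-1$. Hence the Super KW lower bound is $p^{n^2}2^{n}$, which already equals $\dim Z_\chi(\lambda)$. Once that is noticed, the corollary is an immediate consequence of Theorem~\ref{th:KW-N}: a nonzero quotient of $Z_\chi(\lambda)$ has dimension at least $p^{n^2}2^{n} = \dim Z_\chi(\lambda)$, so the baby Verma module is simple. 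This is exactly the paper's proof, which simply records $\dim\g_{X,\bar 0}=n$, $\dim\g_{X,\bar 1}=1$, concludes $\sudim\m'=\sudim\n^-$, and invokes Theorem~\ref{th:KW-N}.

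Your route instead reruns the internal mechanism of Theorem~\ref{th:KW-N}: you work out the grading (correctly) to get $\g(-1)=\g(-1)_{\bar 1}$ one-dimensional, identify $\sudim\m = n^2 \mid (n-1)$ and $\m'=\m\oplus\g(-1)$, use the freeness Proposition~\ref{prop:freeness} to get $\dim Z_\chi(\lambda)^{\m}=2$, show $M^{\m}$ carries a $\mathfrak q_1(K)$-action via Proposition~\ref{prop:q1}, and then run a socle argument on an arbitrary nonzero submodule. All of this is valid, and the bracket check $[\g(-1),\m]\subseteq\oplus_{k\ge 3}\g(-k)\subseteq\m$ ensuring that $M^{\m}$ is $\m'$-stable is the right thing to verify. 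What your approach buys is an explicit picture of the Verma module as a rank-two free $\rea{\chi}{\m}$-module whose $\m$-invariants form the simple type $Q$ module; what it costs is duplicating work already packaged in Theorem~\ref{th:KW-N}. You should fix the numerics of the Super KW bound in your first paragraph (it is $p^{n^2}2^n$, not $p^{n^2}2^{n-1}$, under the paper's definition of $\lfloor\cdot\rfloor$), which then makes the one-line reduction to Theorem~\ref{th:KW-N} visible.
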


\begin{proof}
It is well known that $\dim \g_{X,\bar{0}} = n$, which is the rank
of $\ev\g=\mathfrak{sp}(2n)$. We claim that $\dim \g_{X,\bar{1}} =
1$. Indeed, $\od \g$ is the natural $\mathfrak{sp}(2n)$-module,
and $X$ can be regarded as a matrix of corank $1$. Therefore, we
have $\sudim \m'= \sudim \mathfrak{n}^-$ (recall the subalgebra
$\m'$ from Sect.~\ref{sec:subalgm}). Now having dimension equal to
$\dim \rea{\chi}{\m'}$, the baby Verma modules of $\rea{\chi}{\g}$
must be simple by Theorem~\ref{th:KW-N}.
\end{proof}

\section{The modular representations of $\osp(1 | 2)$}
\label{sec:osp12}

In this section, we give a complete description of the modular
representation theory of $\g =\osp(1|2)$, with many similarities
to the well-known $\mathfrak{sl}(2)$ case \cite{FP} (also cf.
\cite[Sect.~5]{Jan1}). It turns out that there are no projective
simple $U_\chi(\g)$-modules in contrast to the
$\mathfrak{sl}(2)$-case.

\subsection{Lie superalgebra $\osp(1|2)$}
Recall that $\g = \osp(1|2)$ consists of $3 \times 3$ matrices in
the following $(1|2)$-block form
\[
\begin{bmatrix}
0 & v & u \\
u & a & b \\
-v & c & -a
\end{bmatrix}
\]
with $a, b, c, u, v \in K$. The even subalgebra is generated by
\begin{gather*}
e =\begin{bmatrix} 0 & 0 & 0 \\ 0 & 0 & 1 \\ 0 & 0 & 0
\end{bmatrix} \qquad h=\begin{bmatrix} 0 & 0 & 0 \\ 0 & 1 & 0 \\ 0 & 0 &
-1 \end{bmatrix} \qquad f=\begin{bmatrix} 0 & 0 & 0 \\ 0 & 0 & 0
\\ 0 & 1 & 0 \end{bmatrix}.
\end{gather*}
A basis of $\od \g$ is given by
\begin{gather*}
E=\begin{bmatrix} 0 & 0 & 1 \\ 1 & 0 & 0 \\ 0 & 0 & 0
\end{bmatrix} \qquad F =\begin{bmatrix} 0 & 1 & 0 \\ 0 & 0 & 0 \\ -1 & 0 & 0
\end{bmatrix}.
\end{gather*}
The adjoint $\ev\g(\cong \mathfrak{sl}(2))$-module $\od\g$ is the
two-dimensional natural module.

We collect the commutation relations of these basis elements
below:
\begin{gather}
[h, E]=E \qquad [h, F] = -F  \notag\\
[e, E]=0 \qquad [e, F] = -E \notag\\
[f, E]=-F \qquad [f, F] =0 \notag\\
[E, E]= 2e \qquad [E, F] = h \qquad [F, F]= -2f. \notag
\end{gather}

It is easy to check that the relations $\pth e = 0 = \pth f$ and
$\pth h = h$ provide a restricted structure on the Lie
superalgebra $\g$.

Since $\ev \g \cong \mathfrak{sl}(2)$, there are three coadjoint
orbits of $\ev \g^*$ with the following representatives:
\begin{enumerate}
\item[(i)] regular nilpotent: $\chi(e) = \chi(h) =0$ and $\chi(f)
= 1$;

\item[(ii)] regular semisimple: $\chi(e)=0=\chi(f)$ and $\chi(h)=
a^p$ for some $a \in K^*$;

\item[(iii)] restricted: $\chi(e)=\chi(f)=\chi(h)=0$.
\end{enumerate}

\subsection{The baby Verma modules of $\osp(1|2)$}\label{subsec:osp-general}
Fix $\chi \in \ev \g^*$ such that $\chi(e)=0$ and denote the Borel
subalgebra $\mathfrak{b} :=Ke + Kh +K E$ with Cartan subalgebra
$\mathfrak{h}=K h$. Recall from Section~\ref{sec:ind-mod} the baby
Verma module $Z_{\chi}(\lambda) =U_\chi(\g)
\otimes_{U_\chi(\mathfrak b)} K_\la$ with $\lambda^p - \lambda =
\chi(h)^p$. The set $\{ v_i := F^i \otimes 1 \vert 0 \leq i <
2p\}$ is a basis for $Z_{\chi}(\lambda)$ with $\g$-action given by
\begin{align}
h v_i &= (\lambda - i) v_i .  \label{eq:osp(1|2)-1}\\
f v_i &= \begin{cases} -v_{i+2} & 0 \leq i < 2p -2,\\
\chi(f)^p v_0 & i = 2p - 2,\\
\chi(f)^p v_1 & i = 2p - 1.
\end{cases}  \label{eq:osp(1|2)-2}\\
e v_i &= \begin{cases} -\frac{i}{2} (\lambda + 1 -\frac{i}{2}
)v_{i-2} & \text{if $i$ is even,}\\
-\frac{i-1}{2} (\lambda  -\frac{i - 1}{2})v_{i-2} & \text{if $i$
is odd.}
\end{cases}  \label{eq:osp(1|2)-3}\\
F v_i &= \begin{cases} v_{i+1} & 0 \leq i < 2p -1,\\
-\chi(f)^p v_0 & i=2p-1.
\end{cases}  \label{eq:osp(1|2)-4}\\
E v_i &= \begin{cases} -\frac{i}{2} v_{i-1} & \text{if $i$ is
even,}\\
(\lambda - \frac{i - 1}{2}) v_{i - 1} & \text{if $i$ is odd.}
\end{cases}  \label{eq:osp(1|2)-5}
\end{align}
Here we use the convention that $v_i = 0$ when $i < 0$.

Let $M$ be an irreducible $\rea{\chi}{\g}$-module. Note that
$\chi(e) = 0$ and $E^2 = e$ so $E^{2p}= e^p = 0$. Thus the set $\{
m \in M \vert E\cdot m = 0 = e \cdot m\}$, which is $K h$-stable,
is nonzero. Hence there exists $0 \neq m_0 \in M$ such that $e
\cdot m_0 =0 = E \cdot m_0$ and $h \cdot m_0 = \lambda m_0$ for some
$\lambda \in K$, which satisfies $\lambda^p - \lambda =
\chi(h)^p$. Then by the Frobenius reciprocity there exists an
epimorphism of $\g$-modules:
$
Z_{\chi}(\lambda) \twoheadrightarrow M.
$
Thus by Remark~\ref{rem:rea}, every simple module appears as a
homomorphic image of some baby Verma module $Z_{\chi}(\lambda)$.

\subsection{The regular semisimple case}
In this case $\chi(h) \neq 0$, so $\lambda \notin \mathbb{F}_p$
for those satisfying $\lambda^p - \lambda = \chi(h)^p$, where
$\mathbb{F}_p$ denotes the finite field of $p$ elements. It
follows from (\ref{eq:osp(1|2)-1}-\ref{eq:osp(1|2)-5}) that the
only vectors annihilated by $E$ (and $e$) are scalar multiples of
$v_0$. Since each non-zero submodule (either graded or non-graded)
of $Z_{\chi}(\lambda)$ contains a nonzero vector killed by $E$,
$Z_{\chi}(\lambda)$ is irreducible and of type $M$. By
Section~\ref{subsec:osp-general} the baby Verma modules
$Z_{\chi}(\lambda)$ with $\lambda^p - \lambda = \chi(h)^p$ are all
the irreducibles and pairwise non-isomorphic. Now each
$Z_{\chi}(\lambda)$ is of dimension $2p$ and $\dim \rea{\chi}{\g}
= 4p^3$, the algebra $\rea{\chi}{\g}$ has to be isomorphic to the
semisimple superalgebra $\oplus_{\lambda} M_{2p}(K)$ by dimension
counting, where $M_d(K)$ denotes the simple algebra of all $d
\times d$ matrices over $K$. Summarizing, we have the following.

\begin{proposition} \label{prop:osp12semisimple}
Let $\g=\osp(1|2)$, and let $\chi \in \ev \g^*$ be regular
semisimple with $\chi(e) =\chi(f) =0$. Then
\begin{itemize}
\item[(i)] The algebra $\rea{\chi}{\g}$ is semisimple and
isomorphic to the algebra $\oplus_{\lambda} M_{2p}(K)$.

\item[(ii)] The algebra $\rea{\chi}{\g}$ has $p$ distinct
isomorphism classes of irreducible modules, each represented by
some $Z_{\chi}(\lambda)$ with $\lambda^p - \lambda = \chi(h)^p$.
The modules $Z_{\chi}(\lambda)$ are of type $M$.
\end{itemize}
\end{proposition}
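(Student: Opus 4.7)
The plan is to exploit the explicit baby Verma action formulas (\ref{eq:osp(1|2)-1})--(\ref{eq:osp(1|2)-5}) to show that each $Z_\chi(\lambda)$ is irreducible of type $M$, and then deduce (i) from a dimension count. The first step is to pin down $\ker E$ inside $Z_\chi(\lambda)$. For $w=\sum_{i=0}^{2p-1} c_i v_i$, the equation $Ew=0$ splits, via (\ref{eq:osp(1|2)-5}), into two independent sets of scalar relations, one on the odd-indexed basis vectors $v_{2k-1}$ and one on the even-indexed $v_{2k}$. The hypothesis $\chi(h)\neq 0$ forces $\lambda\notin\mathbb{F}_p$, so the coefficients $\lambda-\tfrac{i-1}{2}$ arising for odd $i$ never vanish, while the coefficients $-\tfrac{i}{2}$ arising for even $i\geq 2$ also never vanish. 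Hence $c_i=0$ for $i>0$, giving $\ker E=Kv_0$.

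The second step is irreducibility and classification. Since $E^2=e$ and $e^p=\chi(e)^p=0$, the element $E$ acts nilpotently on $Z_\chi(\lambda)$, so every nonzero submodule (graded or not) meets $\ker E=Kv_0$, hence contains $v_0$, and applying powers of $F$ recovers all of $Z_\chi(\lambda)$. Combined with the observation in Section~\ref{subsec:osp-general} that every simple $U_\chi(\g)$-module is a quotient of some $Z_\chi(\lambda)$, this identifies the $Z_\chi(\lambda)$ as the complete list of simples. Distinct $\lambda$'s yield non-isomorphic modules because $\ker E$ is a module invariant on which $h$ acts by $\lambda$; and the equation $\lambda^p-\lambda=\chi(h)^p$ has exactly $p$ solutions in $K$. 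Any $\g$-endomorphism $\phi$ of $Z_\chi(\lambda)$ preserves $\ker E=Kv_0$; if $\phi$ were odd, $\phi(v_0)$ would be an odd element of $\ker E$, forcing $\phi(v_0)=0$ and hence $\phi=0$. So $\phi$ is even and acts as a scalar on $v_0$, giving $\End_\g Z_\chi(\lambda)=K$ and type $M$.

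Finally, I would compare dimensions: $\dim U_\chi(\g)=p^{\dim\ev\g}\,2^{\dim\od\g}=4p^3=p\cdot(2p)^2$ matches $\sum_\lambda(\dim Z_\chi(\lambda))^2$. By the Wedderburn theorem for semisimple superalgebras, together with the fact that no simple is of type $Q$, this forces $U_\chi(\g)\cong\bigoplus_\lambda M_{2p}(K)$ and establishes semisimplicity. The only delicate point, and the place where the hypothesis $\chi(h)\neq 0$ is essential, is the one-dimensionality and even parity of $\ker E$: this simultaneously yields irreducibility, distinguishes the $Z_\chi(\lambda)$'s, and rules out odd automorphisms (hence type $Q$). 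The remainder runs in close parallel to the standard $\mathfrak{sl}(2)$ picture.
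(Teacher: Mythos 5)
Your proposal is correct and follows essentially the same route as the paper: compute $\ker E=Kv_0$ from the explicit action formulas using $\lambda\notin\mathbb{F}_p$, deduce irreducibility from nilpotency of $E$, pin down the classification via highest weights, and close with a dimension count against $\dim U_\chi(\g)=4p^3$. You are a bit more explicit than the paper in spelling out why the modules are of type $M$ (that $\ker E$ is one-dimensional and purely even rules out an odd automorphism), which is a useful clarification of a point the paper leaves implicit.
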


\subsection{The regular nilpotent case}  In this case, $\chi(h) =
0$ and $\chi(f) = 1$. The $\lambda$ satisfying the equation
$\lambda^p - \lambda = 0$ lies in $\mathbb{F}_p = \{0, 1, \ldots,
p-1\}$. We observe that
$$
\{ v \in Z_{\chi}(\lambda) \vert E \cdot v =0\} = Kv_0 \oplus
Kv_{2\lambda + 1}.
$$
Assume $\lambda \neq \frac{p-1}{2}$ first. Note in this case $h$
acts on $v_0$ and $v_{2 \lambda + 1}$ with different eigenvalues.
Any (graded or non-graded) submodule of $Z_{\chi}(\lambda)$
contains a $Kh$-stable vector killed by $E$, hence either $v_0$ or
$v_{2 \lambda +1}$. But since $v_0 =- F^{2p - 2 \lambda -1}
v_{2\lambda +1}$, the submodule must be equal to
$Z_{\chi}(\lambda)$. Therefore $Z_{\chi}(\lambda)$ is irreducible
and of type $M$.

Now suppose $\lambda = \frac{p-1}{2}$, and so $2\la+1 =p$. In this
case, $v_0$ and $v_{2\la+1}$ have opposite $\Z_2$-parities but
identical $h$-eigenvaule. It follows that $Z_{\chi}(\lambda)$ is
irreducible of type $Q$. Note that $Z_{\chi}(\lambda)$ contains
two $p$-dimensional non-graded simple submodules
$Z_{\chi}(\frac{p-1}{2})^+ = K\{ v_i + \sqrt{-1} v_{p+i} \vert 0
\leq i <p\}$ and $Z_{\chi}(\frac{p-1}{2})^- = K\{ v_i - \sqrt{-1}
v_{p+i} \vert 0 \leq i <p\}$. There is an  odd $\g$-module
involution of $Z_{\chi}(\lambda)$ which exchanges $v_i$ and
$v_{i+p}$ for $0 \leq i \leq p-1$.

A second look at the space of vectors annihilated by $E$ shows
that $Z_{\chi}(\mu)$ is isomorphic to $Z_{\chi}(\lambda)$ if and
only if $\mu =\la$ or $\mu =\la^*$, where we denote $\lambda^* = p
- \lambda -1$. So a complete list of simple $\g$-modules consists
of $Z_{\chi}(\lambda)$, where $0\le \la \le\frac{p-1}{2}$.

>From now on, let $0\le \la,\mu \le\frac{p-1}{2}$. By the exactness
of the functor $\rea{\chi}{\g}
\otimes_{\rea{\chi}{\mathfrak{b}}}-$, the number of composition
factors isomorphic to $Z_{\chi}(\lambda)$ of the left regular
$\rea{\chi}{\g}$-module equals the number of composition factors
isomorphic to $K_{\lambda}$ or $K_{\lambda^*}$ in the left regular
$\rea{\chi}{\mathfrak{b}}$-module. This number is easily seen to
equal $2p$ if $\lambda = \frac{p-1}{2}$ (and so $\la^* =\la$) and
$4p$ otherwise.

Denote by $P_{\chi}(\lambda)$ the projective cover of
$Z_{\chi}(\lambda)$. We claim that the module $Z_{\chi}(\lambda)$
for each $\lambda$ is not projective. Otherwise,
$Z_{\chi}(\lambda)$ cannot appear as a section of any
$P_{\chi}(\mu)$ with $\mu \neq \la$ and it appears once as a
section of $P_{\chi}(\la) =Z_{\chi}(\lambda)$. For $\lambda \neq
\frac{p-1}{2}$, this would imply that the number of composition
factors isomorphic to $Z_{\chi}(\lambda)$ (which is of type $M$)
in $\rea{\chi}{\g}$ equals $\dim Z_{\chi}(\lambda)=2p$. This
contradicts $4p$ as claimed in case for $\lambda \neq
\frac{p-1}{2}$ in the preceding paragraph, and thus the module
$Z_{\chi}(\lambda)$ for $\lambda \neq \frac{p-1}{2}$ is not
projective. Now if $Z_{\chi}( \frac{p-1}{2})$ (which is of type
$Q$) were projective, then the number of composition factors
isomorphic to $Z_{\chi}(\frac{p-1}{2})$ in $\rea{\chi}{\g}$ equals
$\hf \dim Z_{\chi}(\frac{p-1}{2})=p$, by the Wedderburn theorem
for superalgebras (cf. \cite[Theorem~12.2.9]{Kle}), which
contradicts $2p$ as claimed in the preceding paragraph.

Note that $\rea{\chi}{\g}$ is a symmetric algebra by
Proposition~\ref{prop:Fro}. Then the head and socle of
$P_{\chi}(\lambda)$ for each $\la$ must be two distinct copies of
$Z_{\chi}(\lambda)$,
%
and hence $\dim P_{\chi}(\lambda) \geq 4p$. Now $\dim
P_{\chi}(\lambda) =4p$ for each $\la$ thanks to the following
calculation:
\begin{align}
4p^3 =\dim \rea{\chi}{\g} &= \sum_{0\le \lambda <\frac{p-1}{2}}2p
\cdot \dim P_{\chi}(\lambda) + p \cdot \dim
P_{\chi}(\frac{p-1}{2}) \notag\\
& \geq (p-1)/2 \cdot 2p \cdot 4p  + p\cdot 4p =4p^3. \notag
\end{align}

For $\lambda \neq \frac{p-1}{2}$, the endomorphism algebra of the
module $P_{\chi}(\lambda)$ is a two-dimensional local algebra with
basis $\{Id, \pi\}$, where $\pi$ is the projection of
$P_{\chi}(\lambda)$ to its socle, i.e.,
$$\text{End}_{\rea{\chi}{\g}}(P_{\chi}(\lambda)) \cong
K[x]/\langle x^2\rangle.$$

For $\lambda = \frac{p-1}{2}$, the endomorphism algebra
$\text{End}_{\rea{\chi}{\g}}(P_{\chi}(\frac{p-1}{2}))$ is
$(2|2)$-dimensional with basis $\{ Id, \pi \; (\text{even}); J',
\pi \circ J'\; (\text{odd}) \} $, where $\pi$ is the projection of
$P_{\chi}(\frac{p-1}{2})$ to its socle and $J'$ is the lift of the
odd automorphism of $Z_{\chi}(\frac{p-1}{2})$. We have the
following isomorphisms of algebras:
$$
\text{End}_{\rea{\chi}{\g}}(P_{\chi}(\frac{p-1}{2}))
 \cong K[x]/\langle x^2\rangle \otimes \mathfrak{q}_1(K)
\cong \mathfrak{q}_1(K[x]/\langle x^2\rangle).
$$

 Finally put $T = \oplus_{0 \leq \lambda< \frac{p-1}{2}}
P_{\chi}(\lambda)^{\oplus 2} \oplus P_{\chi}(\frac{p-1}{2})$. Then
the left regular module $\rea{\chi}{\g}$ is isomorphic to $T^p$
and
\begin{align}
\rea{\chi}{\g} &\cong
\text{End}_{\rea{\chi}{\g}}(\rea{\chi}{\g})^{\text op} \cong
\text{End}_{\rea{\chi}{\g}}(T^p)^{\text op} \cong
(M_p(\text{End}_{\rea{\chi}{\g}} (T)))^{\text op}   \notag\\
&\cong (\oplus_{0 \leq \lambda< \frac{p-1}{2}} M_{2p}(K[x]/\langle
x^2\rangle)
\oplus M_p(\mathfrak{q}_1(K[x]/\langle x^2\rangle)))^{\text op} \notag\\
&\cong (\oplus_{0 \leq \lambda< \frac{p-1}{2}} M_{2p}(K[x]/\langle
x^2\rangle) \oplus \mathfrak{q}_p(K[x]/\langle x^2\rangle))^{\text
op}. \notag
\end{align}
Summarizing, we have proved the following.

\begin{proposition}\label{prop:osp(1|2)-nil}
Let $\g=\osp(1|2)$, and let $\chi \in \ev \g^*$ be regular
nilpotent with $\chi(e) =\chi(h) =0$. Then
\begin{itemize}
\item[(i)] The superalgebra $\rea{\chi}{\g}$ has $\frac{p+1}{2}$
isomorphism classes of irreducible modules, i.e.
$Z_{\chi}(\lambda)$ for $\lambda = 0, 1, \ldots, \frac{p-1}{2}$.

\item[(ii)] For $\lambda \in \mathbb{F}_p$, the module
$Z_{\chi}(\lambda)$ is isomorphic to $Z_{\chi}(p - \lambda -1)$,
and there is no other isomorphism among the baby Verma modules.

\item[(iii)] The module $Z_{\chi}(\lambda)$ is of type $M$ for
$\lambda \neq \frac{p-1}{2}$; and $Z_{\chi}(\frac{p-1}{2})$ is of
type $Q$.

\item[(iv)] Each projective cover $P_{\chi}(\lambda)$
is a self-extension of $Z_{\chi}(\lambda)$.

\item[(v)]
As algebras, $\rea{\chi}{\g}^{\text op} \cong \oplus_{0 \leq
\lambda< \frac{p-1}{2}} M_{2p}(K[x]/\langle x^2\rangle) \oplus
\mathfrak{q}_p(K[x]/\langle x^2\rangle)$.
\end{itemize}
\end{proposition}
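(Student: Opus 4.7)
The plan is to exploit the explicit formulas \eqref{eq:osp(1|2)-1}--\eqref{eq:osp(1|2)-5} for the $\g$-action on each $2p$-dimensional baby Verma module $Z_{\chi}(\lambda)$ in conjunction with the fact that $\rea{\chi}{\g}$ is a symmetric superalgebra (Proposition~\ref{prop:Fro}), proceeding in close analogy with the $\mathfrak{sl}(2)$ case but tracking super parities carefully. Since $\chi(h)=0$, Section~\ref{subsec:osp-general} restricts attention to $\lambda \in \mathbb{F}_p$, so the analysis reduces to a finite list of baby Vermas.

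For parts (i)--(iii), I would compute the $E$-kernel inside $Z_{\chi}(\lambda)$ from \eqref{eq:osp(1|2)-5}: it equals $Kv_0 \oplus Kv_{2\lambda+1}$. When $\lambda \neq \frac{p-1}{2}$ the two generators have distinct $h$-eigenvalues $\lambda$ and $-\lambda-1$, and since \eqref{eq:osp(1|2)-4} gives $v_0$ as a scalar multiple of $F^{2p-2\lambda-1}v_{2\lambda+1}$, any nonzero submodule must be all of $Z_{\chi}(\lambda)$; the second generator furnishes the high weight of $Z_{\chi}(p-\lambda-1)$, yielding (ii) and type $M$. When $\lambda=\frac{p-1}{2}$ the two generators share the same $h$-eigenvalue but have opposite parities, so $Z_{\chi}(\lambda)$ remains simple and the map $v_i \mapsto v_{i+p}$ is a nontrivial odd endomorphism, certifying type $Q$.

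The crux is (iv). I would first count, via Frobenius reciprocity from the Borel subalgebra, the multiplicity $[\rea{\chi}{\g}:Z_{\chi}(\lambda)]$: it equals the number of composition factors $K_\mu$ of $\rea{\chi}{\mathfrak{b}}$ with $\mu \in \{\lambda, \lambda^*\}$, giving $4p$ for $\lambda \neq \frac{p-1}{2}$ and $2p$ for $\lambda=\frac{p-1}{2}$. Projectivity of a given $Z_{\chi}(\lambda)$ would force this multiplicity to equal $\dim Z_{\chi}(\lambda)$ (type $M$) or $\tfrac12\dim Z_{\chi}(\lambda)$ (type $Q$, by the superalgebra Wedderburn theorem \cite[Thm.~12.2.9]{Kle}), each contradicting the count. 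Symmetry of $\rea{\chi}{\g}$ then identifies the socle of every $P_{\chi}(\lambda)$ with its head, so $\dim P_{\chi}(\lambda) \geq 4p$, and the dimension balance
\[
4p^3 = \dim \rea{\chi}{\g} = \sum_{0 \leq \lambda < \frac{p-1}{2}} 2p\cdot \dim P_{\chi}(\lambda) + p \cdot \dim P_{\chi}(\tfrac{p-1}{2})
\]
forces equality, so each $P_{\chi}(\lambda)$ is a length-two self-extension of $Z_{\chi}(\lambda)$.

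For (v) I would compute $\mathrm{End}_{\rea{\chi}{\g}}(P_{\chi}(\lambda))$. For $\lambda \neq \frac{p-1}{2}$ it is spanned by the identity and the projection $\pi$ to the socle, hence isomorphic to $K[x]/\langle x^2\rangle$; for $\lambda=\frac{p-1}{2}$ the odd involution on $Z_{\chi}(\frac{p-1}{2})$ lifts to an odd automorphism $J'$ of $P_{\chi}(\frac{p-1}{2})$, yielding the basis $\{\mathrm{id},\pi, J', \pi J'\}$ and endomorphism algebra $\mathfrak{q}_1(K[x]/\langle x^2\rangle)$. Setting $T = \bigoplus_{\lambda < (p-1)/2} P_{\chi}(\lambda)^{\oplus 2} \oplus P_{\chi}(\frac{p-1}{2})$, the left regular module decomposes as $T^p$, and the identification $\rea{\chi}{\g} \cong \mathrm{End}_{\rea{\chi}{\g}}(T^p)^{\mathrm{op}} \cong (M_p(\mathrm{End}(T)))^{\mathrm{op}}$ together with $M_p(\mathfrak{q}_1(R)) \cong \mathfrak{q}_p(R)$ for $R = K[x]/\langle x^2\rangle$ delivers (v).
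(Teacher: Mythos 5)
Your proposal follows the paper's own proof essentially step by step: the $E$-kernel computation for (i)--(iii), the Frobenius-reciprocity count of $Z_\chi(\lambda)$ in the regular module combined with the Wedderburn theorem to rule out projectivity, the symmetric-algebra argument forcing $\dim P_\chi(\lambda) \geq 4p$ with the $4p^3$ dimension balance to pin it down for (iv), and the endomorphism-algebra computation with the module $T$ and $M_p(\mathfrak{q}_1(R)) \cong \mathfrak{q}_p(R)$ for (v). This is the same approach and all steps check out.
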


\subsection{The restricted case}
In this case $\chi =0$, and $\rea{\chi}{\g} = \rea{0}{\g}$ is the
restricted enveloping superalgebra. For $\lambda \in \mathbb{F}_p
=\{0, \ldots, p-1\}$, let $L(\lambda)$ be the $\g$-module with
basis ${v_0, \ldots, v_{2\lambda}}$ and with the action given by
formulas (\ref{eq:osp(1|2)-1}), (\ref{eq:osp(1|2)-3}),
(\ref{eq:osp(1|2)-5}) and
\begin{equation*}
f v_i = -v_{i+2}, \quad F v_i = v_{i+1}.
\end{equation*}
Each module $L(\lambda)$ is irreducible and of type $M$. Dropping
the subscript $\chi=0$, we shall denote by $Z(\la)$ and $P(\la)$
the baby Verma module and projective cover of $L(\la)$
respectively for each $\lambda \in \mathbb{F}_p$.

It is straightforward to verify that the baby Verma module
$Z(\lambda)$ has two composition factors $L(\lambda)$ and $L(p -
\lambda -1)$. Indeed, $Z(\lambda)$ has a unique proper submodule
generated by $v_{2\la +1}$, where $E.v_{2\la +1} =0$ by
(\ref{eq:osp(1|2)-5}). This submodule is simple and is isomorphic
to $L(p - \lambda -1)$.

The general results by Holmes and Nakano \cite{HN} apply in our
setup, since all the simple modules $L(\lambda)$ are of type $M$.
In particular, by \cite[Thms.~4.5 and 5.1]{HN} the projective
cover $P(\lambda)$ of $L(\lambda)$ has a baby Verma filtration,
and for any $\lambda , \mu \in \mathbb{F}_p$ one has the Brauer
type reciprocity
\[
( P(\lambda): Z(\mu) ) = [Z(\mu) : L(\lambda)],
\]
where $( P(\lambda): Z(\mu) )$ is the multiplicity of $Z(\mu)$
appearing in the baby Verma filtration of $P(\lambda)$, and
$[Z(\mu) : L(\lambda)]$ is the multiplicity of $L(\lambda)$ in a
composition series of $Z(\mu)$. It follows by the discussion on
the composition factors of $Z(\mu)$ that $(P(\lambda): Z(\mu)) =1$
for $\mu = \lambda \text{ or } p - \lambda -1$, and is $0$
otherwise.

In summary, we have proved the following.

\begin{proposition} \label{prop:res}
Let $\g=\osp(1|2)$. Then
\begin{itemize}
\item[(i)] The algebra $\rea{0}{\g}$ has $p$ isomorphism classes
of irreducible modules $L(\lambda)$ for $\lambda = 0, \ldots,
p-1$. Moreover, $L(\lambda)$ has dimension $2\lambda +1$.

\item[(ii)] For each $\la$, $Z(\la)$ has two composition factors:
$L(\lambda)$ and $L( p-\lambda-1)$.

\item[(iii)] For each $\la$, the projective cover $P(\lambda)$ has
a baby Verma filtration with $Z(p-\lambda-1)$ and $Z(\la)$ as
subquotients.
%
\end{itemize}
\end{proposition}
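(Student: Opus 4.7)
The plan is to verify each of the three parts by combining direct calculation on the baby Verma modules with the Holmes--Nakano reciprocity in \cite{HN}, whose applicability we already ensured by checking that all simple modules are of type $M$.

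First, for (i), I would check by direct matrix calculation that the formulas (\ref{eq:osp(1|2)-1}), (\ref{eq:osp(1|2)-3}), (\ref{eq:osp(1|2)-5}) together with $fv_i = -v_{i+2}$ and $Fv_i = v_{i+1}$ define a well-formed $\g$-action on the $(2\lambda+1)$-dimensional space $L(\lambda)$, and that this module is irreducible: any nonzero submodule must contain a vector killed by $E$, which (by the $h$-eigenvalue consideration inherited from the $\mathfrak{sl}(2)$ analysis) must be a multiple of $v_0$, and $v_0$ generates the whole module. Since the $h$-weights of $L(\lambda)$ are distinct, the endomorphism ring is one-dimensional, so $L(\lambda)$ is of type $M$. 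To see these exhaust the simple modules, I would invoke Section~\ref{subsec:osp-general}: every simple $U_0(\g)$-module is a quotient of some $Z(\lambda)$ with $\lambda \in \mathbb{F}_p$.

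Next, for (ii), I would use the explicit baby Verma formulas with $\chi=0$. A short computation using (\ref{eq:osp(1|2)-3}) and (\ref{eq:osp(1|2)-5}) shows $E \cdot v_{2\lambda+1} = 0 = e \cdot v_{2\lambda+1}$, so $v_{2\lambda+1}$ generates a proper submodule $N$. Its $h$-highest weight is $\lambda - (2\lambda+1) = -(\lambda+1) \equiv p-\lambda-1 \pmod p$, and counting the basis $v_{2\lambda+1}, v_{2\lambda+2}, \ldots, v_{2p-1}$ gives $\dim N = 2(p-\lambda-1)+1$, so $N \cong L(p-\lambda-1)$. The quotient $Z(\lambda)/N$ has highest weight $\lambda$ and dimension $2\lambda+1$, hence is isomorphic to $L(\lambda)$. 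Uniqueness of this proper submodule follows once more from the fact that the only $E$-annihilated weight vectors in $Z(\lambda)$ are (up to scalar) $v_0$ and $v_{2\lambda+1}$.

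Finally, for (iii), since every $L(\lambda)$ is of type $M$, the hypotheses of Holmes--Nakano \cite[Thms.~4.5 and 5.1]{HN} are satisfied, so $P(\lambda)$ has a baby Verma filtration and the reciprocity $(P(\lambda):Z(\mu)) = [Z(\mu):L(\lambda)]$ holds. Combined with (ii), this forces $(P(\lambda):Z(\mu)) = 1$ exactly when $\mu \in \{\lambda, p-\lambda-1\}$ and zero otherwise, yielding the claimed filtration of $P(\lambda)$ with subquotients $Z(p-\lambda-1)$ and $Z(\lambda)$.

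The routine calculations are the explicit verifications on $Z(\lambda)$; the only conceptual step is checking the hypotheses of \cite{HN}, which here is essentially immediate because all simples are of type $M$, so I anticipate no real obstacle beyond careful bookkeeping of indices and weights modulo $p$.
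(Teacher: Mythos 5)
Your proposal follows the paper's approach exactly: direct calculation on the baby Verma modules to establish (i) and (ii), then Holmes--Nakano reciprocity for (iii). The structure and key steps are the same; you simply spell out details that the paper leaves implicit.

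There is, however, one genuine error in your justification of ``type $M$'' in part (i). You write that ``the $h$-weights of $L(\lambda)$ are distinct, the endomorphism ring is one-dimensional.'' This is false when $\lambda > \frac{p-1}{2}$: then $\dim L(\lambda) = 2\lambda+1 > p$, so the eigenvalues $\lambda, \lambda-1, \ldots, \lambda - 2\lambda$ of $h$ must repeat modulo $p$ (for instance, when $\lambda = p-1$ every nonzero residue appears twice). The correct argument for type $M$ is the same $E$-annihilation computation you use for irreducibility: a direct check of \eqref{eq:osp(1|2)-5} on $v_0,\ldots,v_{2\lambda}$ shows the $E$-annihilated subspace of $L(\lambda)$ is exactly $Kv_0$, a one-dimensional \emph{even} subspace. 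Any homogeneous endomorphism $\phi$ must send $v_0$ to an $E$-annihilated $h$-eigenvector of weight $\lambda$ with parity $|v_0|+|\phi|$; hence an even $\phi$ satisfies $\phi(v_0)=cv_0$, forcing $\phi=c\cdot\mathrm{id}$, while an odd $\phi$ satisfies $\phi(v_0)=0$, forcing $\phi=0$. Thus $\End(L(\lambda))=K$ and $L(\lambda)$ is of type $M$, independently of whether the $h$-weights are distinct. With that repair, the rest of your argument --- the identification of $N = \langle v_{2\lambda+1}\rangle \cong L(p-\lambda-1)$, its uniqueness via the $E$-kernel of $Z(\lambda)$ being $Kv_0 \oplus Kv_{2\lambda+1}$, and the application of \cite[Thms.~4.5, 5.1]{HN} --- is sound and matches the paper.
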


Clearly, Propositions~\ref{prop:osp12semisimple},
\ref{prop:osp(1|2)-nil}, and \ref{prop:res} fit well with the
Super KW Property established in Theorem~\ref{th:KW}.

\vspace{.3cm}

\noindent {\bf Acknowledgments.} It is a pleasure to thank Bin Shu
for discussions and help with modular Lie algebra literature in
the early stage of our formulation of the Super KW Conjecture in
2004. We are indebted to Sasha Premet for his influential ideas,
stimulating discussions and insightful suggestions. LZ thanks Dan
Nakano for his generous help with a reading course on modular Lie
algebras. WW also thanks Chaowen Zhang for sending a manuscript
during the preparation of this work which contained some basics on
modular Lie superalgebras and a (less precise) version of the
Super KW Conjecture as in Section~\ref{sec:basics}.

\end{document}